\numberwithin{equation}{section}
\title{Cut and project sets with polytopal window I: complexity}
\date{\today}
\author{Henna Koivusalo}
\address{School of Mathematics, University of Bristol, Bristol, BS8 1UG, UK}
\email{henna.koivusalo@bristol.ac.uk}
\urladdr{https://people.maths.bris.ac.uk/~te20281/}
\author{James J.\ Walton} 
\address{School of Mathematical Sciences, Mathematical Sciences Building, University Park, Nottingham, NG7 2RD, United Kingdom}
\email{James.Walton@nottingham.ac.uk}
\urladdr{https://www.nottingham.ac.uk/mathematics/people/james.walton}
\theoremstyle{plain}
\newtheorem{theorem}{Theorem}[section]
\newtheorem{definition}[theorem]{Definition}
\newtheorem{lemma}[theorem]{Lemma}
\newtheorem{proposition}[theorem]{Proposition}
\newtheorem{corollary}[theorem]{Corollary}
\newtheorem*{informal_cut}{Informal version of Corollary \ref{cor: acceptance <-> cuts}}
\newtheorem*{informal_complexity}{Informal version of Theorem \ref{thm: generalised complexity}}
\theoremstyle{definition}
\newtheorem{remark}[theorem]{Remark}
\newtheorem{example}[theorem]{Example}
\newtheorem{notation}[theorem]{Notation}
\newcommand{\id}{\mathrm{id}}
\newcommand{\R}{\mathbb{R}}
\newcommand{\Z}{\mathbb{Z}}
\newcommand{\N}{\mathbb{N}}
\newcommand{\sH}{\mathscr{H}}
\newcommand{\sA}{\mathscr{A}}
\newcommand{\sC}{\mathscr{C}}
\newcommand{\sF}{\mathscr{F}}
\newcommand{\sD}{\mathscr{D}}
\newcommand{\cS}{\mathcal{S}}
\newcommand{\cA}{\mathcal{A}}
\newcommand{\cC}{\mathcal{C}}
\newcommand{\cB}{\mathcal{B}}
\newcommand{\bT}{\mathbb{T}}
\newcommand{\op}{\mathrm{op}}
\newcommand{\rk}{\mathrm{rk}}
\newcommand{\E}{\mathbb{E}}
\newcommand{\tot}{\E}               
\newcommand{\phy}{\tot_{\vee}}      
\newcommand{\intl}{\tot_{<}}        
\newcommand{\cps}{\Lambda}          
\newcommand{\W}{W^c}                
\newcommand{\iW}{\mathring{W}}      
\newcommand{\iH}{\mathring{H}}
\thanks{Supported by a London Mathematical Society Scheme 2 grant. JW supported by EPSRC grant EP/R013691/1.}
\subjclass[2010]{Primary: 52C23; Secondary: 52C45}
\keywords{Aperiodic order, cut and project, model sets, complexity}
\begin{document}

\thispagestyle{empty}


\begin{abstract}
We calculate the growth rate of the complexity function for polytopal cut and project sets. This generalises work of Julien where the almost canonical condition is assumed. The analysis of polytopal cut and project sets has often relied on being able to replace acceptance domains of patterns by so-called cut regions. Our results correct mistakes in the literature where these two notions are incorrectly identified. One may only relate acceptance domains and cut regions when additional conditions on the cut and project set hold. We find a natural condition, called the quasicanonical condition, guaranteeing this property and demonstrate via counterexample that the almost canonical condition is not sufficient for this. We also discuss the relevance of this condition for the current techniques used to study the algebraic topology of polytopal cut and project sets.
\end{abstract}
\maketitle

\section*{Introduction}

Periodic patterns of Euclidean space have been central geometric objects of study in mathematics for millennia. By the 18\textsuperscript{th} century it was realised by scientists such as Kepler, Steno and Ha\"{u}y that the polyhedral macroscopic geometry of crystalline structures was likely a result of them being periodically arranged at the microscopic level. Based on this assumption, the internal order of such materials could thus be understood through a classification of the space groups, all 230 of which were found by Fedorov and Sch\"{o}nflies by 1892. Some mathematical objects, however, whilst having almost periodic behaviour, lack perfect periodicity. The field of Aperiodic Order is the mathematical study of such structures \cite{BG13}. For example, codings of dynamical systems may have a highly structured and repetitive nature without being perfectly periodic. Properties of linear forms may be understood as properties of quasiperiodic structures. Similarly, materials with highly ordered internal structure need not be periodic, as discovered by Dan Shechtman in 1982 when he first identified what are now known as quasicrystals \cite{SchBleGraCah84}.

The richness of the loosely defined class of aperiodically ordered patterns is immense, although there are still only a few standard ways of generating significant classes of them. One, via substitution rules, generates aperiodically ordered patterns with hierarchical structure. The other major technique of constructing aperiodic patterns is via the cut and project method (the resulting point sets are also known as model sets \cite{Moo97}). With some standard restrictions, the patterns constructed in this way in fact display pure point diffraction, which makes them an important class to study for their relevance in modelling quasicrystals \cite{Hof95b, Hof95a}.

In this paper we restrict attention to cut and project patterns with convex polytopal windows. Loosely speaking, such patterns are constructed as follows. Firstly, we have a Euclidean space $\tot$, called the total space, of dimension $k$. We then choose a $d$-dimensional subspace $\phy \leqslant \tot$ called the physical space and a complementary subspace $\intl$; the notation indicates that we often picture projecting `downwards' to the physical space, and `sideways' to the internal space (see Figure \ref{fig:cps}). A window $W\subset \intl$ is chosen inside the internal space which defines the strip $W + \phy \subseteq \tot$ parallel to the physical space. From this data one constructs cut and project sets by placing a (possibly translated) lattice $\Gamma \leqslant \tot$ in the total space, `cutting' those points of it which fall into the strip and then projecting them to the physical space. Under certain simple restrictions, the resulting point pattern in the $d$-dimensional physical space is aperiodic and has a great deal of structural order.

This paper is the first of two addressing the long-range order of cut and project sets with polytopal windows. In the forthcoming second part, we characterise \emph{linear repetitivity} \cite{LP03} for such patterns, greatly generalising the cubical case solved in \cite{HaynKoivWalt2015a}. The current work concerns the \emph{complexity} of these patterns and builds a framework for analysing the appearance of their finite patterns, which will also be fundamentally utilised in the second part.

One desirable property of an aperiodic pattern is for it to have low complexity, which means that it has a small number of distinct local configurations of a given size, up to translation equivalence. It was established by Julien \cite{Jul10} that, under a certain restriction on the cut and project scheme, the number $p(r)$ of patches of radius $r$ (called $r$-patches) grows, asymptotically, as $r^\alpha$ for aperiodic polytopal cut and project sets, where $\alpha \in \N$ with $d \leq \alpha \leq d(k-d)$. Moreover, the number $\alpha$ can be determined via the ranks of certain intersections of the lattice with subspaces associated to the hyperplanes defining the boundary of the window. Unfortunately, there is an error in the proof of \cite[Proposition 2.1]{Jul10}. One of the purposes of the current work is to explore the severity of this error and correct it. 

The restriction used in \cite{Jul10} is the `almost canonical condition', the purpose of which is to establish a connection between `acceptance domains' of the window and, what we call here, \emph{cut regions}. The acceptance domains are subsets of the internal space which dictate the types of patches seen in the pattern: one lifts points of the cut and project set to the lattice, projects them to the internal space and then the acceptance domain this point lands in determines the local patch at that point. However, the geometry of the acceptance domains is somewhat complicated, and for ease of analysis they are replaced by `cut regions', the convex regions bounded by lattice translates of the hyperplanes defining the boundary of the window.

As we shall see in Example \ref{ex: triangle}, the almost canonical condition is not the correct one to establish the connection of acceptance domains and cut regions. We introduce a new condition, which we call the \emph{quasicanonical} condition, which guarantees the desired connection. This repairs the error in the main argument of \cite{Jul10} in determining the growth of the complexity function for a polytopal cut and project set under the quasicanonical condition. To this end we prove the following result:

\begin{informal_cut} For a quasicanonical cut and project set, up to a linear rescaling of $r$, the acceptance domains $\sA(r)$ of $r$-patches refine cut regions $\sC(r)$, and vice versa.
\end{informal_cut}

We continue the analysis to the situation where the only assumption is that the window is polytopal, proving the following statement as Theorem \ref{thm: generalised complexity}. In the following, $\Gamma^H$ (Definition \ref{def: stabiliser}) is the subgroup of $\Gamma$ preserving a given hyperplane $H$ defining the boundary of the window; the exact statement requires some extra notation which will be postponed to Section \ref{sec: removing the quasicanonical condition}.

\begin{informal_complexity}
Consider an aperiodic cut and project pattern with a convex polytopal window $W$. Then the complexity grows asymptotically as $p(r) \asymp r^\alpha$ for $\alpha \in \N$. The number $\alpha$ can be derived from the ranks of the subgroups $\Gamma^H$ of $\Gamma$ and the dimensions of their $\R$-linear spans in $\intl$.
\end{informal_complexity}

As in \cite{Jul10}, it can be shown that $\alpha \leq d(k-d)$ and that $\alpha \geq d$ if the cut and project set is aperiodic. This result means that the quasicanonical condition need not have been introduced for the question of the complexity of polytopal cut and project sets. However, the quasicanonical condition guarantees the connection between acceptance domains and cut regions, a property that seems important in many applications. In fact, there are many instances in the literature where this connection is needed and to establish it, the almost canonical condition is assumed. For example, this is the case in the topological study of these patterns via their associated translational hulls \cite{FHK02}. Our work demonstrates that in such cases the almost canonical condition should be replaced with the quasicanonical condition.

\subsection*{Outline of the paper}

In Section \ref{sec: euclidean cut and project schemes} we give the definition of a Euclidean cut and project scheme and lay out our notation. In Section \ref{sec: patches and acceptance domains} we recall how one constructs the acceptance domains and how they are associated to patches of cut and project sets. We define the quasicanonical condition in Section \ref{sec: quasicanonical cut and project schemes} for polytopal windows. Geometric intuition is given for what the condition says and also some technical results which will be useful for establishing that under this condition one may connect acceptance domains to cut regions, which is stated precisely and proved in Section \ref{sec: cut regions}. We also contrast the quasicanonical and almost canonical conditions in this section through some simple examples, and demonstrate that in particular cases the connection between acceptance domains and cut regions fails to hold true if only assuming the almost canonical property. We believe that these first sections could be of independent importance in the study of patterns in cut and project sets. 

In Section \ref{sec: complexity} we derive, in a similar manner to Julien's proof in \cite{Jul10}, a formula for the asymptotic rate of growth of the complexity function for a quasicanonical cut and project set. These techniques are refined in Section \ref{sec: removing the quasicanonical condition}, where we obtain a general result on the complexity function by removing the quasicanonical condition. Finally, in Section \ref{sec: topology of transversal}, we explain why the quasicanonical condition is relevant in describing the topology of the translational hull of a polytopal cut and project set and hence also to calculations for their cohomology.

\subsection*{Summary of notation}

\begin{itemize}
		\item $(\tot,\phy,\intl,\Gamma,W)$ : Euclidean cut and project scheme, with total space $\tot$ of dimension $k$, physical space $\phy$ of dimension $d$, complementary internal space $\intl$, lattice $\Gamma \leqslant \tot$ and window $W \subseteq \intl$ (compact and equal to the closure of its interior).
		\item $\pi_\vee \colon \tot \to \phy$ : the projection from the total space to the physical space with respect to $\intl$. For $x \in \tot$ we let $x_\vee \coloneqq \pi_\vee(x)$. Analogous notation for $\intl$ in place of $\phy$.
		\item $y^\wedge$ : for $y = \pi_\vee(\gamma)$, with $\gamma \in \Gamma$, we let $y^\wedge \coloneqq \gamma$, i.e., $y^\wedge \in \Gamma$ and $(y^\wedge)_\vee = y$.
		\item $y^*$ : the star map applied to $y \in \Gamma_\vee$ is $y^* \coloneqq (y^\wedge)_<$.
		\item $\cps$ : the cut and project set associated to the scheme, $\cps \coloneqq \{y \in \Gamma_\vee \mid y^* \in W\}$.
		\item $A_P$ : the acceptance domain in $W$ associated to (pointed) patch $P$ of $\cps$.
		\item $\sA(r)$ : the set of acceptance domains of $r$-patches.
		\item $\sH^+$ : the set of half-spaces defining a polytopal window $W$, with associated collection of affine hyperplanes $\sH$.
		\item $V(H)$ : the vector subspace given by translating an affine hyperplane $H$ over the origin.
		\item $\cS(r)$, $\cB(r)$ : the `slab' and `box' of size $r$ in $\Gamma$ (equations \ref{eq: slab} and \ref{eq: box}). For a polytopal window these define the cut regions $\sC'(r)$ and $\mathscr{C}(r)$.
		\item $\Gamma^H$ : the stabiliser of $H$ is the subgroup $\Gamma^H \coloneqq \{\gamma \in \Gamma \mid H = H + \pi_<(\gamma)\}$ of $\Gamma$.
		\item $f \ll g$ : for two functions $f,g \colon \R_{>0} \to \R_{>0}$, we write $f \ll g$ to mean that there exists some constant $C > 0$ for which $f(x) \leq C g(x)$ for sufficiently large $x$. That is, $f \in O(g)$.
		\item $f \asymp g$ : for two functions as above, $f \asymp g$ means that $f \ll g$ and $g \ll f$.
\end{itemize}

\subsection*{Acknowledgements} Both authors thank Antoine Julien for several helpful discussions in the preparation of this work. He should also be credited with suggesting the name `quasicanonical' for our property. We thank the anonymous referee for their insightful comments and suggestions.

\section{Euclidean cut and project set schemes} \label{sec: euclidean cut and project schemes}

Throughout, vector spaces will be finite-dimensional over $\R$. Let $\tot$ be a $k$-dimensional vector space, which we refer to as the {\bf total space}. We equip it with a decomposition $\tot = \phy + \intl$ where $\dim(\phy) = d$ and $\dim(\intl) = k-d$, with $0<d<k$. The subspaces $\phy$ and $\intl$ are referred to as the {\bf physical space} and the {\bf internal space}, respectively. Let $\Gamma$ be a lattice in $\tot$, a discrete subgroup of the total space which is co-compact, that is, with compact quotient $\tot / \Gamma$. Therefore, $\Gamma \cong \Z^k$ and $\tot / \Gamma \cong (S^1)^k$, the $k$-torus. Finally, we choose a {\bf window} $W \subset \intl$, a compact subset of $\intl$ which is equal to the closure of its interior. In much of what follows we will further restrict to polytopal windows.

Let $\pi_\vee \colon \tot \to \phy$ and $\pi_< \colon \tot \to \intl$ denote the projections from the total space to the physical space and internal space, respectively, where the projections are with respect to the decomposition $\tot = \phy+\intl$. We make the standard assumptions that $\pi_\vee$ is injective on $\Gamma$ and that $\pi_<(\Gamma)$ is dense in $\intl$.

The data $(\tot,\phy,\intl,\Gamma,W)$ is called a {\bf $k$-to-$d$ Euclidean cut and project scheme}.

\begin{notation} Throughout, given $X \subseteq \tot$, we let $X_\vee \coloneqq \pi_\vee(X)$ and $X_< \coloneqq \pi_<(X)$. Similarly, for $x \in \tot$ we write $x_\vee \coloneqq \pi_\vee(x)$ and $x_< \coloneqq \pi_<(x)$.

In the literature there are two standard pictures of geometrically representing cut and project schemes. In some contexts, such as where a special kind of window is given, it is helpful to consider what happens when $\phy$ is being varied; that is, $\Gamma$ is considered fixed, often with $\tot = \R^k$ and $\Gamma = \Z^k$, and $\phy$ and $\intl$ are `skewed' in $\tot$. This is the approach of \cite{HKW14, HaynKoivSaduWalt2015, HaynKoivWalt2015a}, for example. In other contexts it is preferable to think of $\phy$ and $\intl$ as fixed with $\Gamma$ varying, taking $\tot \coloneqq \phy \oplus \intl$. This is the case in \cite{MS15}. Of course these viewpoints are ultimately equivalent and since we only specify that $\tot$ has the direct sum decomposition $\phy+\intl$, rather than being defined as the direct sum, our conventions are agnostic as to which is preferred.

Our notation, however, is chosen with the second viewpoint in mind. For a $2$-to-$1$ scheme, we imagine $\phy$ as the $x$-axis and $\intl$ as the $y$-axis of $\tot = \R^2$, see Figure \ref{fig:cps}. The notation indicates that $\phy$ is the subspace to which one projects `downwards', and $\intl$ is the subspace to which one projects `to the left' (at least, of course, restricting view to points in the first quadrant, as in the standard pictures). Similarly, for $x \in \Lambda$, the point $x_\vee$ may be thought of as the projection of the point `downwards' to $\phy$, and $x_<$ is the projection of $x$ `to the left'.
\end{notation}

Since $\pi_\vee$ is injective on $\Gamma$, for every $y \in \Gamma_\vee$ there is a unique element, denoted $y^\wedge \in \Gamma$ and called the {\bf lift of $y$}, for which $(y^\wedge)_\vee = y$. Again, this notation is geometrically inspired by Figure \ref{fig:cps}. An important map in analysing the nature of recurrence of patches in cut and project sets is the {\bf star map} $y \mapsto y^*$ mapping $\Gamma_\vee$ into $\intl$, and defined by $y^* \coloneqq (y^\wedge)_<$. Note that the star map is the composition of the inverse of the isomorphism $\pi_\vee |_\Gamma \colon \Gamma \xrightarrow{\cong} \Gamma_\vee$ followed by the projection $\pi_<$, so the star map is a homomorphism (although it is not continuous in the subspace topology of $\Gamma_\vee \subseteq \phy$).

We call a parameter $s \in \tot$ {\bf singular} if $\pi_<(\Gamma+s) \cap \partial W \neq \emptyset$, otherwise $s$ is {\bf non-singular}. The {\bf cut and project set} associated to the cut and project scheme and non-singular parameter $s$ is the subset of the physical space defined by:
\begin{equation} \label{eq: cpsfull}
\cps_s \coloneqq \{\gamma_\vee \mid \gamma \in (\Gamma+s) \cap (\phy+W)\}.
\end{equation}
That is, we form the cut and project set by `cutting' out the points of the shifted lattice $\Gamma+s$ which fall in the `strip' $\phy+W$, and then project them to the physical space; see Figure \ref{fig:cps}. 

\begin{figure}
	\def\svgwidth{\textwidth}
	\centering
\begingroup%
  \makeatletter%
  \providecommand\color[2][]{%
    \errmessage{(Inkscape) Color is used for the text in Inkscape, but the package 'color.sty' is not loaded}%
    \renewcommand\color[2][]{}%
  }%
  \providecommand\transparent[1]{%
    \errmessage{(Inkscape) Transparency is used (non-zero) for the text in Inkscape, but the package 'transparent.sty' is not loaded}%
    \renewcommand\transparent[1]{}%
  }%
  \providecommand\rotatebox[2]{#2}%
  \newcommand*\fsize{\dimexpr\f@size pt\relax}%
  \newcommand*\lineheight[1]{\fontsize{\fsize}{#1\fsize}\selectfont}%
  \ifx\svgwidth\undefined%
    \setlength{\unitlength}{234.70637512bp}%
    \ifx\svgscale\undefined%
      \relax%
    \else%
      \setlength{\unitlength}{\unitlength * \real{\svgscale}}%
    \fi%
  \else%
    \setlength{\unitlength}{\svgwidth}%
  \fi%
  \global\let\svgwidth\undefined%
  \global\let\svgscale\undefined%
  \makeatother%
  \begin{picture}(1,0.50363758)%
    \lineheight{1}%
    \setlength\tabcolsep{0pt}%
    \put(0,0){\includegraphics[width=\unitlength,page=1]{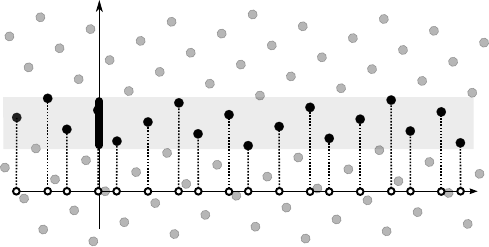}}%
    \put(0.24419397,0.45868964){\color[rgb]{0,0,0}\makebox(0,0)[lt]{\lineheight{0}\smash{\begin{tabular}[t]{l}$\E_<$\end{tabular}}}}%
    \put(0.90131659,0.0570201){\color[rgb]{0,0,0}\makebox(0,0)[lt]{\lineheight{0}\smash{\begin{tabular}[t]{l}$\E_\vee$\end{tabular}}}}%
    \put(0.52331651,0.42991753){\color[rgb]{0,0,0}\makebox(0,0)[lt]{\lineheight{0}\smash{\begin{tabular}[t]{l}$\Gamma+s$\end{tabular}}}}%
    \put(0.54505755,0.06169416){\color[rgb]{0,0,0}\makebox(0,0)[lt]{\lineheight{0}\smash{\begin{tabular}[t]{l}$\cps_s$\end{tabular}}}}%
    \put(0.21920279,0.33103429){\color[rgb]{0,0,0}\makebox(0,0)[lt]{\lineheight{0}\smash{\begin{tabular}[t]{l}$W$\end{tabular}}}}%
    \put(0,0){\includegraphics[width=\unitlength,page=2]{cps.pdf}}%
    \put(0.93330906,0.4586658){\color[rgb]{0,0,0}\makebox(0,0)[lt]{\lineheight{0}\smash{\begin{tabular}[t]{l}$\E$\end{tabular}}}}%
  \end{picture}%
\endgroup%

	\caption{The definition of a cut and project set.}\label{fig:cps}
\end{figure}

There are also cut and project sets defined by singular parameters, although multiple ones for each such parameter. In this case, to get permissible cut and project sets one should make choices---which are consistent, in a certain sense---as to which points $\gamma_\vee$ with $\gamma_< \in \partial W$ are included. These choices should be made so that the resulting finite sub-patches appear in the $\Lambda_s$ coming from non-singular parameters, which is to say that these singular patterns are certain limits of non-singular $\Lambda_s$.

It follows from density of $\Gamma_<$ in $\intl$ that every $\cps_s$ for non-singular $s$ has the same set of `finite patches'. As such, the parameter $s$ does not play an important role for questions concerning complexity. Hence, shifting the window $W$ if necessary, we may assume without loss of generality that our chosen parameter is $s=0$ and is non-singular. Correspondingly, we denote $\cps \coloneqq \cps_0$, and observe that Equation \ref{eq: cpsfull} becomes:
\begin{equation} \label{eq: cps}
\cps = \{y \in \Gamma_\vee \mid y^* \in W\}.
\end{equation}
A {\bf period} of $\cps$ is some $x \in \phy$ with $\cps = \cps + x$. We call $\cps$ {\bf aperiodic} if it has no non-zero periods. It is not hard to show that the set of periods corresponds to the kernel of the star map, so $\cps$ is aperiodic if and only if $\pi_<$ is injective on $\Gamma$. We shall not assume aperiodicity unless otherwise stated.


\section{Patches and acceptance domains} \label{sec: patches and acceptance domains}

Some of the content of this section is very closely related to earlier works, see \cite{Jul10, BV00, HaynKoivSaduWalt2015}. 

By identifying $\tot \cong \R^k$ we get an inner product, norm and metric on the total space, and similarly on the physical and internal space. The particular choice does not affect our proofs.
Denote a closed ball of radius $r$ and centre $x$ by $B_r(x)$. For $y \in \cps$ and $r>0$ we define $P(y,r)$, the {\bf $r$-patch} of $y \in \cps$, to be $\cps \cap B_r(y)$, the subset $\cps$ of points within distance $r$ of $y$, together with the specially marked `origin' $y \in P(y,r)$. We say that the $r$-patches at $x$ and $y$ are {\bf translation equivalent} if $P(x,r) - x = P(y,r) - y$, that is they agree up to a translation mapping the marked origin of one to the other.

Cut and project sets, as defined above, have the special property of being {\bf repetitive}, that is, for every $r > 0$ there exists some $R_r$ for which every $r$-patch, up to translation equivalence, can be found within $R_r$ of every point of $\phy$. Let $p(r)$ denote the number of translation classes of $r$-patches in some cut and project set $\cps$. This is called the {\bf complexity function} of $\cps$.

The translation class of an $r$-patch at $y \in \cps$ can be determined by which `acceptance domain' inside the window $W$ the point $y^*$ falls into, and hence will play an important role in analysing the complexity function. We explain here how to construct these acceptance domains.

\begin{lemma} \label{lem: displacements} Let $y \in \cps$. If $y+x \in \cps$ (`the displacement $x$ occurs in $\cps$ at $y$') then $x \in \Gamma_\vee$. Furthermore, given $x \in \Gamma_\vee$, then $y+x \in \cps$ if and only if $y^* \in W-x^*$. \end{lemma}

\begin{proof} We have that $y$, $y+x \in \cps$, so they lift to $y^\wedge$, $(y+x)^\wedge \in \Gamma$. Hence $(y+x)^\wedge - y^\wedge \in \Gamma$ and projects under $\pi_\vee$ to $x$, so $x \in \Gamma_\vee$.

Now assume that $y \in \cps$ and $x \in \Gamma_\vee$ are given. By the definition of the cut and project set (Equation \ref{eq: cps}), $y+x \in \cps$ if and only if $(y+x)^* \in W$. Since the star map is a homomorphism, this in turn is equivalent to $y^* \in W-x^*$. \end{proof}

Consider the `slab' of lattice points of width $r$ and $\E_<$ coordinate in $W-W$:
\begin{equation} \label{eq: slab}
\cS(r) \coloneqq \{\gamma \in \Gamma \mid \|\gamma_\vee\| \leq r, \gamma_< \in W-W\}.
\end{equation}
Taking $B=B_r(0)\subseteq \phy$ and $W-W \subseteq \intl$ as also subsets of $\tot$, we may write $\cS(r) = \Gamma \cap (B + (W-W))$. Since $B$ and $W-W$ are bounded, and $\Gamma$ is a lattice, $\cS(r)$ is a finite set for each $r$.

Let $y \in \cps$, so that $y^* \in W$. By the above, it is only possible that $y+x \in \cps$ if $y^* \in W - x^*$, which implies that $x^* \in W-y^* \subseteq W-W$. In specifying the $r$-patch at $y$, one needs to specify those displacements $x \in {\phy}$ of magnitude at most $r$ which are \emph{in} the patch, so that $y+x \in \cps$, and those displacements which are \emph{not in} the patch, so that $y+x \notin \cps$. For each collection one only needs to consider those \emph{possible} displacements, so that $x \in \Gamma_\vee$, $\|x\| \leq r$ and $x^* \in W-W$. So given $P = P(y,r)$, let
\begin{align} \label{eq: in and out}
P_\text{in} \coloneqq \{\gamma \in \cS(r) \mid y+\gamma_\vee \in \cps\}, \\
P_\text{out} \coloneqq \{\gamma \in \cS(r) \mid y+\gamma_\vee \notin \cps\}.
\end{align}

We denote the complement of $W$ in $\intl$ by $W^c$ and its interior by $\iW$.

\begin{corollary}\label{cor: acceptance domains} Let an $r$-patch $P=P(y,r)$ be given. With the definitions of $P_\text{in}$ and $P_\text{out}$ above, we have that the $r$-patch at $z \in \cps$ agrees with $P$ up to translation if and only if $z^* \in A_P$, where
	\begin{equation} \label{eq: acceptance domain}
A_P = \left(\bigcap_{\gamma \in P_\text{in}} \iW - \gamma_<\right) \cap \left(\bigcap_{\gamma \in P_\text{out}} W^c - \gamma_<\right).
	\end{equation}
\end{corollary}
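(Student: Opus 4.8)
The plan is to reduce everything to Lemma \ref{lem: displacements} applied displacement-by-displacement, using the fact that whether two $r$-patches agree up to translation is entirely detected on the finite set $\cS(r)$. First I would fix an $r$-patch $P = P(y,r)$ and consider another point $z \in \cps$. I claim that $P(z,r) - z = P(y,r) - y$ if and only if for every $\gamma \in \cS(r)$ we have the equivalence $z + \gamma_\vee \in \cps \iff y + \gamma_\vee \in \cps$. The forward direction is immediate. For the converse, suppose the patches were to differ: then there is some $x \in \phy$ with $\|x\| \le r$ and $z+x \in \cps$ but $y + x \notin \cps$ (or vice versa). By Lemma \ref{lem: displacements} applied to $z$, the displacement $x$ lies in $\Gamma_\vee$, so $x = \gamma_\vee$ for a unique $\gamma \in \Gamma$; moreover $\|\gamma_\vee\| = \|x\| \le r$ and, again by Lemma \ref{lem: displacements}, $x^* = \gamma_< \in W - z^* \subseteq W - W$ (using $z^* \in W$). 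Hence $\gamma \in \cS(r)$, contradicting the assumed equivalence. (The symmetric case is the same with the roles of $y$ and $z$ swapped, noting $y^* \in W$ as well.) So patch-agreement is exactly the statement that $P_\text{in}$ and $P_\text{out}$ are the same sets computed from $z$ as from $y$.

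Next I would convert each of these finitely many conditions into a condition on $z^*$. For $\gamma \in P_\text{in}$ (so $y + \gamma_\vee \in \cps$), we want $z + \gamma_\vee \in \cps$ as well; by Lemma \ref{lem: displacements} this holds iff $z^* \in W - \gamma_<$. For $\gamma \in P_\text{out}$ (so $y+\gamma_\vee \notin \cps$), we want $z + \gamma_\vee \notin \cps$; since $\gamma_\vee \in \Gamma_\vee$, Lemma \ref{lem: displacements} says $z + \gamma_\vee \in \cps \iff z^* \in W - \gamma_<$, so the negation is $z^* \in W^c - \gamma_<$. Intersecting over all $\gamma \in P_\text{in}$ and all $\gamma \in P_\text{out}$, together with the standing requirement $z \in \cps$ i.e. $z^* \in W$, gives
\begin{equation*}
z^* \in W \cap \left(\bigcap_{\gamma \in P_\text{in}} W - \gamma_<\right) \cap \left(\bigcap_{\gamma \in P_\text{out}} W^c - \gamma_<\right).
\end{equation*}

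Finally I would upgrade the closed sets $W - \gamma_<$ to open ones $\iW - \gamma_<$, which is the only subtle point. Here one uses that $0$ is non-singular: the point $z^* = z^* $ never lies on any translate $\partial W - \gamma_<$ for $\gamma \in \Gamma$, because $z^* + \gamma_< = (z + \gamma)_< \in \pi_<(\Gamma + z) = \pi_<(\Gamma)$ (as $z^\wedge \in \Gamma$), and non-singularity of the parameter $s = 0$ (after the reduction in Section \ref{sec: euclidean cut and project schemes}) means $\pi_<(\Gamma) \cap \partial W = \emptyset$. Hence for such $z$, $z^* \in W - \gamma_< \iff z^* \in \iW - \gamma_<$, and in particular the standing condition $z^* \in W$ becomes $z^* \in \iW$, which is absorbed into the $P_\text{in}$ intersection once one notes $0 = 0^\wedge{}_\vee \in P_\text{in}$ with $0_< = 0$ (the marked origin is always in the patch). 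This yields exactly the stated formula for $A_P$. The main obstacle is this last step: one must be careful that the replacement of $W$ by $\iW$ is legitimate precisely for the relevant points $z^*$, and that it is harmless to restrict attention to those points since any $z \in \cps$ automatically satisfies the non-singularity constraint; everything else is a direct unwinding of Lemma \ref{lem: displacements}.
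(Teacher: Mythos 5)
Your proof is correct and follows essentially the same route as the paper's: reduce patch agreement to equality of $P_\text{in}$ and $P_\text{out}$ (using injectivity of $\pi_\vee$ on $\Gamma$), translate each condition via Lemma \ref{lem: displacements}, and then replace $W$ by $\iW$ using that $z^*$ is non-singular because $z^* + \gamma_< = (z^\wedge + \gamma)_< \in \Gamma_<$ and $s = 0$ is non-singular. The only difference is that you spell out more detail — notably the forward/converse argument showing agreement is detected on $\cS(r)$, and the observation that $0 \in P_\text{in}$ absorbs the standing constraint $z^* \in W$ — but this is amplification, not a different method; minor slip: in the non-singularity step you write $(z+\gamma)_<$ where you mean $(z^\wedge+\gamma)_<$, though the parenthetical makes the intent clear.
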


\begin{proof} The $r$-patches $P(y,r)$ and $P(z,r)$ agree if and only if the sets of displacements
\[
\{y'-y \mid y' \in P(y,r)\} = \{z'-z \mid z' \in P(z,r)\}
\]
agree which, by injectivity of $\pi_\vee$ on $\Gamma$ and the discussion above, is if and only if
\[
P(y,r)_{\mathrm{in}} = P(z,r)_{\mathrm{in}} \ \ \mathrm{and} \ \ P(y,r)_{\mathrm{out}} = P(z,r)_{\mathrm{out}}.
\]
The result then follows from Lemma \ref{lem: displacements}, at least after replacing each $\iW - \gamma_<$ with $W - \gamma_<$. But these sets only differ on their boundaries $\partial W - \gamma_<$, which are singular points, so either of $y^*$ and $z^*$ belong to one if and only if they belong to the other. \end{proof}

\begin{definition} The $A_P \subseteq W$, defined as in the above corollary as certain intersections of $\Gamma_<$ translates of $\iW$ and $\W$, are called {\bf acceptance domains}. We let $\sA(r)$ denote the set of acceptance domains $A_P$ of $r$-patches. \end{definition}

Note that we take the convention here that acceptance domains are open. One could equally take closed domains, for example by replacing $\iW$ with $W$ and $W^c$ with $(\iW)^c$. Doing this can result in translates of $(\iW)^c$ intersecting along lower dimensional subspaces which one would rather ignore, so we prefer here to take open domains (which is also marginally more efficient for defining the cut domains later). These distinctions are not important, since they only differ at points of $\partial W + \gamma_<$ for $\gamma \in \Gamma$, which are singular and thus not relevant in the above corollary.

The window is tiled by the acceptance domains of $\sA(r)$, in the sense that distinct acceptance domains are disjoint and $W = \bigcup_{\sA(r)} \mathrm{cl}(A_P)$, using the fact that the window is the closure of its interior and denseness of $\Gamma_<$ in $\intl$.

\section{Quasicanonical cut and project schemes} \label{sec: quasicanonical cut and project schemes}

To $\eta$, $\alpha \in \intl$ with $\eta \neq 0$ we can associate an affine hyperplane and a half-space:
\begin{align*}
H \coloneqq \{x + \alpha \in \intl \mid x \in \intl, x\cdot \eta = 0\}, \\
H^+ \coloneqq \{x + \alpha \in \intl \mid x \in \intl, x\cdot \eta \geq 0\}.
\end{align*}
In the following, we will usually use the name `hyperplane' for affine hyperplanes, using the term `codimension $1$ subspace' when it is needed that the hyperplane contains the origin.

The window $W$ is called {\bf polytopal} if it is compact, has non-empty interior and may be written as a finite intersection of half-spaces in $\intl$. Let $\sH^+$ denote the set of half-spaces defining $W$. It is uniquely determined by specifying that it is {\bf irredundant}, that is, omitting any element results in a strictly larger intersection. Each $H^+ \in \sH^+$ has an associated affine hyperplane $H$, and we denote the set of these by $\sH$. Each element of $\sH$ (as $\sH^+$ is irredundant) intersects $W$ in a $(k-d-1)$-dimensional `face'; for more details, see Section \ref{sec: complexity}.

We now introduce a condition on polytopal cut and project schemes which allows for a simpler analysis of the corresponding acceptance domains, as we shall see later.

\begin{definition} \label{def: qc} We call a cut and project scheme {\bf quasicanonical} if it is polytopal and we have the following. For each half-space $H^+ \in \sH^+$, with opposite half-space $H^- = (H^+)^c \cup H$ and associated affine hyperplane $H \in \sH$, and each $z \in H$, there exists some $\epsilon > 0$ and a finite collection of points $x_1$, \ldots, $x_m \in \Gamma_<$ so that one of the two equations below is satisfied:
\begin{align} \label{eq: qc1}
H^+ \cap B_\epsilon(z) = \left(\bigcup_{i=1}^m (W - x_i) \right) \cap B_\epsilon(z); \\
\label{eq: qc2}
H^- \cap B_\epsilon(z) = \left(\bigcup_{i=1}^m (W - x_i) \right) \cap B_\epsilon(z).
\end{align}
\end{definition}

Note that even though $\Gamma_<$ is dense, the only translates that are relevant in Equations \ref{eq: qc1} and \ref{eq: qc2} are those that translate a face of $W$ parallel to $H$ into $H$ (see Lemma \ref{lem: qc => face covering}). This is because in Definition \ref{def: qc} we only allow finite unions. The first formula says that the half-space $H^+$ corresponding to one face of the window can be exactly covered, in some neighbourhood of $z$, by a finite collection of $\Gamma_<$ translates of the window. The second formula says the same thing for the opposite half-space. It may be possible that a mixture of the two possibilities is needed over all points of $H$, see Figure \ref{fig:quasicanonical}. A set of translates like those in the rightmost picture of Figure \ref{fig:quasicanonical} are not sufficient to establish the condition for that hyperplane.

\begin{figure}
	\def\svgwidth{0.6\textwidth}
	\centering
	\import{images/}{q_can.pdf_tex}
	\caption{In this picture, the grey translates of the window are used in satisfying Equation \ref{eq: qc1} at appropriate points of the supporting hyperplane $H$, given here by a vertical line. The translates marked with an `X' are used in satisfying Equation \ref{eq: qc2} or, equivalently, are those used to exclude points of the opposite half space, as in Remark \ref{rem: duality}. In the leftmost picture, the quasicanonical condition is satisfied: at all points of the $1$d hyperplane, translates of the square window can be used to locally cover the greyscale half-space. The middle picture also satisfies the quasicanonical condition, but at some points of the hyperplane one needs to cover this half-space, and at other points the opposite one (thus using both Equation \ref{eq: qc1} and \ref{eq: qc2}). The rightmost picture does not satisfy the quasicanonical condition: at the `corners', no translates of the window cover the half-space, or its opposite, in any neighbourhood of that point.
}\label{fig:quasicanonical}
\end{figure}

Notice that the quasicanonical condition, in particular, ensures that the affine hyperplanes of $\sH$ may be covered by $\Gamma_<$ translates of $\partial W$ (see Lemma \ref{lem: qc => face covering}). In Section \ref{sec: troublesome scheming} we will compare the quasicanonical condition to the almost canonical condition introduced in \cite{Jul10}, and find out that neither is stronger or weaker than the other.

\begin{remark} \label{rem: duality} Applying complements to Equation \ref{eq: qc2} in $B_\epsilon(z)$, we have the following `dual' form:
\begin{equation} \label{eq: qc opp}
(H^+ \setminus H) \cap B_\epsilon(z) = \left(\bigcap_{i=1}^m W^c - x_i \right) \cap B_\epsilon(z).
\end{equation}
By Lemma \ref{lem: qc => face covering} below, every point of $H$ is singular. So we see that we have equality
\[
H^+ \cap B_\epsilon(z) = \left(\bigcap_{i=1}^m \W - x_i \right) \cap B_\epsilon(z)
\]
provided that we restrict the above formula to the non-singular points of the internal space. This means that, loosely speaking, Definition \ref{def: qc} can be rephrased as saying that, for each $H \in \sH$, at each point of $z \in H$, we can either cover the half-space $H^+$ locally with $\Gamma_<$ translates of $W$, or express this half-space by excluding such translates on the opposite side of the half-space. \end{remark}

\subsection{Boolean Schemes} \label{sec: Boolean} Suppose that for each $z \in H$ we may find translates satisfying the above condition of Equation \ref{eq: qc opp} (or equivalently Equation \ref{eq: qc2}, so that we do not need Equation \ref{eq: qc1}). Then the window satisfies the following {\bf Boolean} condition: we may find a finite number of elements $\gamma_i \in \Gamma$ so that
\[
W^c \cap (W+(W-W)) = \left(\bigcup_{i=1}^\ell W-(\gamma_i)_<\right) \cap (W+(W-W)),
\]
at least after ignoring the singular points of $\partial W$. The set $W+(W-W)$ here should just be thought of as a neighbourhood of $W$: the condition essentially says that we may express the complement of $W$, within a certain neighbourhood, using a union of $\Gamma_<$ translates of $W$.

It follows that we may replace each term $W^c - \gamma_<$ in Equation \ref{eq: acceptance domain} (for $\gamma \in P_\mathrm{out}$) by $\bigcup W - (\gamma-\gamma_i)_<$, since points outside of $W+(W-W)$ are not relevant for the acceptance domains, which necessarily lie in $W$.

Let $c$ denote the maximum of the norms of the $(\gamma_i)_\vee$, so $\|(\gamma - \gamma_i)_\vee\| \leq r+c$. For $y \in \cps$, according to Lemma \ref{lem: displacements}, we have that $y^* \in W^c - \gamma_<$ if and only if $y + \gamma_\vee \notin \cps$, and that $y^* \in \bigcup (W-(\gamma-\gamma_i)_<)$ if and only if one of $y + (\gamma - \gamma_i)_\vee \in \cps$. Therefore, in the Boolean case, to determine if a displacement within radius $r$ \emph{is not} in $\cps$, it is equivalent to show that at least one of a particular finite set of displacements within radius $r+c$ \emph{is} in $\cps$.

The Boolean schemes are special cases of quasicanonical ones, but the Boolean condition is strictly stronger, since it is necessary that if $H^+ \in \sH^+$ then a translate of its opposite half-space is also in $\sH^+$. For example, one may easily define a triangular window which is quasicanonical but not Boolean.

\begin{example} \label{exp: canonical} The data $(\tot,\phy,\intl,\Gamma,W)$ is a {\bf canonical cut and project scheme} if the window is given as the projection of (a translate of) a fundamental cell for $\Gamma$, that is $W = \widetilde{W}_<$ for
\[
\widetilde{W} \coloneqq \{\sum_{i=1}^k \lambda_i b_i \in \tot \mid \lambda_i \in [0,1]\},
\]
where $\{b_i\}_{i=1}^k$ is a basis for $\Gamma$. For example, when $\Gamma = \Z^k$ in total space $\tot = \R^k$ with standard basis, the window is the projection of the unit hypercube $[0,1]^k \subseteq \R^k$ to the internal space.

As one would hope, a canonical scheme is quasicanonical. In fact, it can be shown that it has the Boolean condition above.
\end{example}

\subsection{Informal discussion: Hoopla} 

\begin{figure}
	\def\svgwidth{\textwidth}
	\centering
	\import{images/}{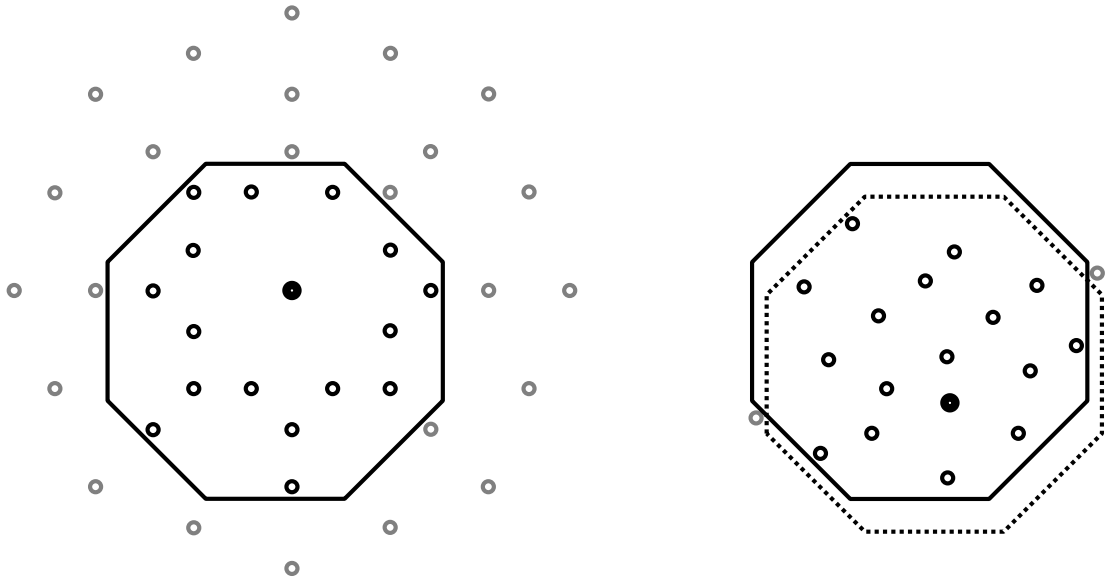}
	\caption{On the left: Points inside the translated window belong to the $r$-patch. On the right: The two translates correspond to the same $r$-patch, but external points block a continuous movement of one to the other: some such points have to be temporarily added during the motion. The acceptance domain is disconnected.}\label{fig:hoopla}
\end{figure}

Rather than varying the points $y^*$ in $W$ and considering which $r$-patches arise depending on where precisely $y^* \in W$ sits, one may prefer to imagine a dual picture: translate $y^*$ to the origin, and so consider only $r$-patches at the origin by moving $W$ instead. We play a game of \emph{hoopla}, a carnival ring tossing game: throw $\partial W$ (carefully: by translation) over the finite set of `pegs' $\cS_<(r) \coloneqq \pi_<(\cS(r))$. You win a prize if you capture the special peg $0 \in \cS_<(r)$. The prize is an $r$-patch $P$, and which $r$-patch it is depends only on which pegs land inside the window.

There is a certain amount of room in which to shift $\partial W$ so as to not bump into any pegs and thus still keeping $P$. The full set of motions in which we may move the window in this way traces out a connected region which is contained in the acceptance domain $A_P$ of the patch. However, it is vital to note that it is \emph{not} necessarily the whole of $A_P$, it is just one of possibly several connected components of $A_P$. The window $W$ is convex, so there is no issue in moving it between configurations resulting in the same $P$ so long as one removes pegs outside of the placed window. However, one must also consider these external pegs. It can happen that they obstruct the motion of the window between two configurations yielding $P$, making $A_P$ non-convex or even disconnected.

In the Boolean case of Subsection \ref{sec: Boolean}, this effect is not so important: we can at least manoeuvre the window between two configurations representing the same $r$-patch without bumping into pegs if they correspond to the same $(r+c)$-patch for some fixed $c > 0$. We shall also see that in the quasicanonical case the situation is tractable in a similar way.

\subsection{Preliminary consequences of the quasicanonical condition}

Here we collect some useful technical results on quasicanonical schemes.

\begin{lemma} \label{lem: qc => face covering} One may assume that the translates $W-x_i$ in Equation \ref{eq: qc1} are such that $z \in F-x_i \subseteq H$, where $F = W \cap H$ is the face in $H$. Similarly, in Equation \ref{eq: qc2}, one may assume that the translates $W^c-x_i$ are such that $z \in F^\op-x_i \subseteq H$, where $F^\op$ is the face opposite to $F$, if it exists, given by $F^\op = W \cap H^\op$ where $H$ and $H^\op$ are parallel and $H^\op \in \sH$. \end{lemma}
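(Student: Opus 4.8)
The plan is to start from an arbitrary finite collection $W - x_i$ witnessing Equation \ref{eq: qc1} at the point $z \in H$ and show that, after discarding the useless translates and shrinking $\epsilon$, each surviving translate meets $B_\epsilon(z)$ in a way that forces $z$ to lie in a translate of the face $F = W \cap H$. The key geometric observation is that since the half-space $H^+$ has boundary $H$, and $z \in H$, any $\Gamma_<$-translate $W - x_i$ that contributes points to $H^+ \cap B_\epsilon(z)$ arbitrarily close to $z$ must itself have $z$ on its boundary: if $z$ were in the interior of $W - x_i$, then $W - x_i$ would contain a full neighbourhood of $z$, hence points on \emph{both} sides of $H$, contradicting that the union is contained in $H^+$ (restricting $\epsilon$ so this containment is exact); and if $z$ were exterior to the closed set $W - x_i$, that translate would contribute nothing near $z$ and could be discarded. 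So $z \in \partial(W - x_i) = \partial W - x_i$ for each retained $i$.

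Next I would upgrade "$z$ lies on $\partial W - x_i$" to "$z$ lies on the particular face $F - x_i$, and that face lies inside $H$". Here I use that near $z$ the translate $W - x_i$ fills out one side of $H$ (its local contribution to $H^+ \cap B_\epsilon(z)$ is a set whose closure contains a $(k-d)$-dimensional neighbourhood-half of $z$ within $H^+$), so the supporting hyperplane of $W - x_i$ through $z$ must be $H$ itself — any other supporting hyperplane at a boundary point only touching $H$ would cut off strictly less than a half-neighbourhood. Thus the face of $W - x_i$ containing $z$ is exactly $(W - x_i) \cap H$; translating back, this is $(W \cap (H + x_i)) - x_i$, and since this face equals $(W-x_i)\cap H$ and has the right dimension, $H + x_i$ must be the affine hyperplane in $\sH$ parallel to $H$ that supports $W$ on the correct side, i.e.\ $H + x_i = H$, giving $z \in F - x_i \subseteq H$ as claimed. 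One then takes $\epsilon$ to be the minimum of the finitely many radii obtained, so a single $\epsilon$ works; the union over the pruned index set still equals $H^+ \cap B_\epsilon(z)$ because we only removed translates disjoint from a neighbourhood of $z$.

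For the second assertion I would apply the dual form of Remark \ref{rem: duality}: Equation \ref{eq: qc2} is equivalent, after complementing in $B_\epsilon(z)$, to Equation \ref{eq: qc opp}, which expresses $(H^+ \setminus H) \cap B_\epsilon(z)$ as an intersection of translates $W^c - x_i$. Running the same argument on each $W^c - x_i$: it must have $z$ on its boundary (else it is either discardable or fails to bound $H^+\setminus H$ on the correct side), and near $z$ its complement $W - x_i$ fills the \emph{opposite} half-space, so the supporting hyperplane of $W - x_i$ at $z$ is again $H$, but now $W - x_i$ sits on the $H^-$ side. Hence the relevant face of $W$ is the one supported by the hyperplane parallel to $H$ on the opposite side, which is precisely $F^\op = W \cap H^\op$ with $H^\op \in \sH$ parallel to $H$, provided such a face exists; if no such parallel hyperplane is in $\sH$ then Equation \ref{eq: qc2} cannot hold near $z$ and only Equation \ref{eq: qc1} is available, so the statement is vacuous in that case. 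The translate condition $z \in F^\op - x_i \subseteq H$ follows exactly as before. The main obstacle I anticipate is the careful bookkeeping in the middle step — making precise the claim that "a translate contributing a full local half of $H^+$ near $z$ must be supported at $z$ by $H$ itself" — which requires ruling out degenerate configurations where several translates conspire to cover a half-neighbourhood while each individually contributes only a thin sliver; this is handled by noting $W$ is a polytope, so $W - x_i$ near a boundary point $z$ coincides with the intersection of the finitely many half-spaces in $\sH^+$ whose boundary passes through $z + x_i$, and a finite union of such polyhedral cones equals a half-space near $z$ only if one of the cones already does, forcing that cone to be the half-space cut out by $H$.
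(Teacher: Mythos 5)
Your overall strategy---prune the translates that miss a neighbourhood of $z$, then show each surviving translate has $z$ on the face $F-x_i$---is the same as the paper's, and your first step (that $z$ must lie on $\partial W - x_i$ for each retained $i$) is correct. The gap is in the ``upgrade'' step. You assert that each individual retained translate ``fills out one side of $H$'', i.e.\ that its local contribution has closure containing a half-neighbourhood of $z$ inside $H^+$. Only the \emph{union} is guaranteed to do this; a single translate need not. For instance, with $W=[0,1]^2$, $H=\{y=0\}$ and $z=(0,0)$, the translates $[0,1]^2$ and $[-1,0]\times[0,1]$ each contribute only a quadrant near $z$, yet their union is $H^+$ near $z$. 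The same configuration refutes the repair you propose at the end: a finite union of tangent cones, each contained in $H^+$, can equal $H^+$ near $z$ without any single cone being a half-space (two quarter-planes suffice). So your deduction that ``the supporting hyperplane of $W-x_i$ through $z$ must be $H$'' is not established. (In the example both translates do satisfy $z\in F-x_i$, since the corner lies on the bottom facet; the lemma is true, but not for the reason you give.)

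The fix is to use only the containment direction of Equation \ref{eq: qc1}---which you already invoked in your first step---together with convexity of $W$; no filling property of individual translates is needed. Since $(W-x_i)\cap B_\epsilon(z)\subseteq H^+$ and $z\in (W-x_i)\cap H$, suppose $z\notin H-x_i$, so that $z$ lies strictly on the positive side of $H-x_i$. Pick any $f\in F$; then $f-x_i$ lies strictly outside $H^+$, and by convexity the segment from $f-x_i$ to $z$ lies in $W-x_i$ while every point of it other than $z$ lies outside $H^+$. This puts points of $W-x_i$ arbitrarily close to $z$ outside $H^+$, contradicting the containment. Hence $z\in (W-x_i)\cap(H-x_i)=F-x_i$, and since $F-x_i$ lies in the hyperplane $H-x_i$, which is parallel to $H$ and meets it at $z$, we get $F-x_i\subseteq H$. (Equivalently: $W-x_i\subseteq H^+$ forces $z\in H$ to minimise the defining functional of $H^+$ over $W-x_i$, and the face of minimisers is exactly $F-x_i$ by irredundancy of $\sH^+$.) This is precisely the paper's argument. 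The same flaw and the same fix apply to your treatment of Equation \ref{eq: qc2}, where you again rely on the complement $W-x_i$ ``filling'' the opposite half-space; instead one uses that each retained $W-x_i$ is \emph{contained} in $H^-$ near $z$, so $z$ maximises the functional over $W-x_i$ and hence lies on $F^\op-x_i$.
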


\begin{proof} Since $F \subseteq H$, clearly if $z \in F-x_i$ then $F-x_i \subseteq H$. We shall show that if $z \notin F-x_i$, then we may discard the term $W-x_i$ in Equation \ref{eq: qc1} (perhaps after reducing the size of $\epsilon$).

Firstly, note that we may as well assume that $z \in W-x_i$. Indeed, otherwise, a sufficiently small ball about $z$ does not intersect this translate, so we may omit it from the union.

Given that $z \in W-x_i$, we have that $z \in F-x_i$ if and only if $z \in H-x_i$. If $z \notin H^+ - x_i$ then $z \notin W-x_i$, so this translate has already been discarded. Suppose, on the other hand, that $z$ belongs to $(H^+ \setminus H) - x_i$. Since $z \in H$, this is equivalent to $x_i \in H^+ \setminus H$, so $-x_i$ is in the complement of $H^+$. Choose any point $f \in F$. By convexity of $W$, the line segment between $f-x_i$ and $z$ lies wholly in $W-x_i$. But the points of this line segment are also in the complement of $H^+$, except for $z$ itself. Indeed, since $f \in H$ and $-x_i \notin H^+$, we see that $f-x_i \notin H^+$, so the same is true of all points between $f-x_i$ and $z \in H$. Hence there are arbitrarily close points to $z$ which are in $W-x_i$ but also in the complement of $H^+$, so Equation \ref{eq: qc1} cannot hold, a contradiction. So we may assume that $z \in H-x_i$, as required.

The case of Equation \ref{eq: qc2} is analogous. Note that if there is no face opposite to $F$ then Equation \ref{eq: qc2} is not used for any $z$ in establishing that the scheme is quasicanonical. \end{proof}

\begin{notation} Given an affine hyperplane $H$ we let $V(H)$ denote its corresponding parallel vector subspace. That is, $V(H) \coloneqq H-H = H-z$ for any $z \in H$. \end{notation}

\begin{lemma}\label{lem: qc => lattice} For a quasicanonical scheme, for each $H \in \sH$, we have that $\Gamma_< \cap V(H)$ contains a basis for $V(H)$. \end{lemma}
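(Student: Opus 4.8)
The plan is to exploit the covering statement from Equation \ref{eq: qc1} or \ref{eq: qc2}, localized near a chosen point $z \in H$ using Lemma \ref{lem: qc => face covering}, and to extract from the finitely many translating vectors $x_i \in \Gamma_<$ enough vectors lying in $V(H)$ to span it. First I would fix $H \in \sH$ and pick a point $z$ in the relative interior of the face $F = W \cap H$ (or, in the case where Equation \ref{eq: qc2} is the one that holds at $z$, in the relative interior of $F^{\mathrm{op}} = W \cap H^{\mathrm{op}}$), so that $z$ is an interior point of the corresponding $(k-d-1)$-dimensional face. By Lemma \ref{lem: qc => face covering} we may take the translates $W - x_i$ (resp.\ $W^c - x_i$) appearing in the quasicanonical equation at $z$ to satisfy $z \in F - x_i \subseteq H$ (resp.\ $z \in F^{\mathrm{op}} - x_i \subseteq H$). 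In particular each such $x_i$ satisfies $F - x_i \subseteq H$, and since $F$ and $H$ are both subsets of a common affine hyperplane, $F - x_i \subseteq H$ forces $x_i$ to be a translation vector preserving $H$, i.e.\ $H - x_i = H$ and hence $x_i \in V(H)$; thus \emph{all} the translating vectors produced by the lemma already lie in $\Gamma_< \cap V(H)$.

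The remaining point is that these vectors span $V(H)$. Suppose not, so that the $x_i$ all lie in a proper subspace $U \subsetneq V(H)$. Consider the covering equation, say Equation \ref{eq: qc1}, $H^+ \cap B_\epsilon(z) = \bigl(\bigcup_{i=1}^m W - x_i\bigr) \cap B_\epsilon(z)$ (the argument for Equation \ref{eq: qc2} is identical after passing to complements as in Remark \ref{rem: duality}). Each set $W - x_i$ is contained in the slab $S_i$ obtained by translating $W$ by $x_i$; crucially, because $x_i \in U$, the `thickness' of $W - x_i$ in directions of $V(H)$ transverse to $U$ is bounded, uniformly in $i$, by the diameter of $W$. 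More precisely, pick a unit vector $v \in V(H)$ orthogonal to $U$; then for every point $w \in W - x_i$ we have $|(w - z)\cdot v| \le \mathrm{diam}(W) =: D$, since $z \in F - x_i$ and $x_i \in U$ means $(w - z)\cdot v = (w - (f - x_i))\cdot v$ for $f\in F$ with $(x_i)\cdot v = 0$, and $w - (f-x_i)$ is a difference of two points of $W - x_i$. Hence the right-hand side of Equation \ref{eq: qc1} is contained in $\{x : |(x-z)\cdot v| \le D\}$. But the left-hand side, $H^+ \cap B_\epsilon(z)$, is a $(k-d)$-dimensional half-ball whose points range over $(x - z)\cdot v \in [-\epsilon,\epsilon]$ freely (as $v \in V(H) \subseteq H^+ - z$'s boundary direction, $v$ points \emph{along} $H$, so both signs of $(x-z)\cdot v$ are realized within $H \cap B_\epsilon(z) \subseteq H^+ \cap B_\epsilon(z)$). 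This is no contradiction on its own since $D$ may exceed $\epsilon$; the real contradiction must instead come from iterating, so let me restructure.

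The cleaner route is: rather than a single ball, cover a large region of $H$. Since $z$ is an interior point of the face, by taking unions of translates and using that $\Gamma_<$ is a group, the local covering at $z$ propagates: there is a fixed $\epsilon>0$ such that for \emph{every} $\gamma \in \Gamma_< \cap V(H)$ with $F - \gamma \ni$ points near $z+\gamma$, the equation $H^+ \cap B_\epsilon(z+\gamma) = \bigl(\bigcup_i W - (x_i + \gamma)\bigr)\cap B_\epsilon(z+\gamma)$ holds by translating the original equation. Now the hyperplane $H$ (equivalently $V(H)$, a $(k-d-1)$-dimensional space) must be covered, up to the singular points $\partial W$, by the translates $\partial W - \gamma$ with $\gamma \in \Gamma_< \cap V(H)$ — this is essentially the face-covering consequence noted after Definition \ref{def: qc}. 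If $\Gamma_< \cap V(H)$ lay in a proper subspace $U$, all these translates of $\partial W$ would be contained in the region $|(x-z)\cdot v|\le D$ for $v\perp U$ as above, contradicting that their union must cover all of $H$, where $(x-z)\cdot v$ is unbounded. Therefore $\Gamma_< \cap V(H)$ spans $V(H)$ over $\R$; being a subgroup of the lattice $\Gamma_<$ it is itself free of rank equal to its span's dimension, so it is a full-rank lattice in $V(H)$ and in particular contains a basis for $V(H)$. The main obstacle I anticipate is making the propagation/covering step rigorous — precisely, showing that the quasicanonical equation at the single point $z$ genuinely forces $\partial W$-translates (by vectors of $\Gamma_< \cap V(H)$) to cover an unbounded subset of $H$, which requires carefully invoking Lemma \ref{lem: qc => face covering} together with the fact that a $(k-d-1)$-face cannot be covered by finitely many lower-dimensional pieces.
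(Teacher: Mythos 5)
Your general strategy matches the paper's: use Lemma \ref{lem: qc => face covering} to locate the translating vectors $x_i$ and argue that they (or nearby lattice vectors in $V(H)$) are spread out across $V(H)$. But there is a concrete error in the step where you conclude that \emph{all} the translating vectors lie in $\Gamma_< \cap V(H)$. In the case where Equation \ref{eq: qc2} applies at $z$, Lemma \ref{lem: qc => face covering} gives $z \in F^{\mathrm{op}} - x_i \subseteq H$ with $F^{\mathrm{op}} \subseteq H^{\mathrm{op}}$; this means $H^{\mathrm{op}} - x_i = H$, so $x_i$ translates $H^{\mathrm{op}}$ onto $H$, and since $H^{\mathrm{op}} \neq H$ this forces $x_i \notin V(H)$. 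Your sentence ``In particular each such $x_i$ satisfies $F - x_i \subseteq H$'' silently swaps $F^{\mathrm{op}}$ for $F$ and is false for those $x_i$. Consequently, the later covering claim --- that $H$ is covered by translates $\partial W - \gamma$ with $\gamma \in \Gamma_< \cap V(H)$ --- is also unjustified: the covering translates come from two families, one lying in $V(H)$ and one lying in a nontrivial coset $v_0 + V(H)$ (where $v_0$ shifts $H^{\mathrm{op}}$ to $H$). The fix, which is exactly what the paper does, is to fix a reference point $z_0 \in H(B)$ with its translating vector $b(z_0)$ and work with the differences $b(z) - b(z_0) \in \Gamma_< \cap V(H)$, which do lie in $V(H)$ and still track the position of $z$ up to the bounded error $2\,\mathrm{diam}(W)$.

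Separately, the obstacle you flag at the end (``making the propagation/covering step rigorous'') is not really where the difficulty lies. You do not need to propagate a single local covering across $H$ at all: the quasicanonical condition is \emph{assumed} at every $z \in H$, so for each $z$ you directly get, via Lemma \ref{lem: qc => face covering}, a lattice vector $a(z) \in \Gamma_< \cap V(H)$ within $\mathrm{diam}(W)$ of $f - z$ (in the $H(A)$ case), or a difference $b(z) - b(z_0) \in \Gamma_< \cap V(H)$ within $2\,\mathrm{diam}(W)$ of $z_0 - z$ (in the $H(B)$ case). Letting $z$ range over $H = H(A) \cup H(B)$ shows $\Gamma_< \cap V(H)$ is relatively dense in $V(H)$, hence contains a spanning set and therefore a basis. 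This replaces both your covering argument and the proper-subspace contradiction with a shorter, direct boundedness argument; the two are the same idea, but the direct version sidesteps the need to reason about which translates cover which regions of $H$.
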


\begin{proof} Let $H(A)$ be the set of $z \in H$ for which we may use Equation \ref{eq: qc1} in the quasicanonical definition, and $H(B)$ be the positions where we may use Equation \ref{eq: qc2}; so $H \subseteq H(A) \cup H(B)$. 

Given $z \in H(A)$, choose $a(z) \in \Gamma_<$ so that $z \in F-a(z)$, where $F$ is the face of $W \cap H$. By the above lemma, we may assume that each $x_i$ of Equation \ref{eq: qc1} satisfies this, so there is such a point. This implies that $a(z) \in F-z \subseteq V(H)$, and is at most the diameter $\delta$ of $W$ in distance from $f-z$, for some fixed $f \in F$.

Similarly, given $z \in H(B)$, choose $b(z) \in \Gamma_<$ so that $z \in F^\op-b(z)$, where $F^\op$ is the face opposite to $F$ (if this face doesn't exist then neither does the $z \in H(B)$). Fixing some $z_0 \in H(B)$, we have that each $b(z) - b(z_0) \in (F-z) - (F-z_0) \subseteq V(H)$, and $b(z) - b(z_0)$ is at most $2\delta$ from $z_0-z$.

In summary, every point of $V(H)$ is a bounded distance from a point $a(z)$ or $b(z) \in \Gamma_<$, so $\Gamma_< \cap V(H)$ must contain a basis for $V(H)$, as required. \end{proof}

\begin{lemma} In the definition of quasicanonical one may assume that the same $\epsilon > 0$ is taken for each $z \in H$. \end{lemma}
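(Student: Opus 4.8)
The plan is to combine a simple ``openness'' observation---that the defining condition at a point, \emph{with a fixed choice of translates}, persists at all sufficiently close points after halving the radius---with the fact that the quasicanonical condition is periodic in directions parallel to $H$. The latter, via Lemma \ref{lem: qc => lattice}, lets me reduce what is a priori a non-compact problem (the affine hyperplane $H$ is unbounded) to a compactness argument on a fundamental domain. So fix $H^+ \in \sH^+$ with opposite half-space $H^-$ and associated hyperplane $H$. For the openness observation: if, say, Equation \ref{eq: qc1} holds at some $z_0 \in H$ with radius $\epsilon$ and translates $x_1, \dots, x_m \in \Gamma_<$, then for any $z$ with $\norm{z - z_0} < \epsilon/2$ we have $B_{\epsilon/2}(z) \subseteq B_\epsilon(z_0)$, so intersecting both sides of Equation \ref{eq: qc1} with $B_{\epsilon/2}(z)$ shows that Equation \ref{eq: qc1} holds at $z$ with radius $\epsilon/2$ and the same translates; the same applies to Equation \ref{eq: qc2}. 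In particular, shrinking the radius in either equation is always harmless, and the equation that holds ``spreads out'' locally in $H$.

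Next, periodicity. By Lemma \ref{lem: qc => lattice} we may pick a basis $v_1, \dots, v_{k-d-1}$ of $V(H)$ with each $v_j \in \Gamma_< \cap V(H)$, and let $L_0 \coloneqq \Z v_1 + \cdots + \Z v_{k-d-1}$, a lattice of full rank in $V(H)$. Both $H^+$ and $H^-$ are invariant under translation by $V(H)$, while $B_\epsilon(z) + \lambda = B_\epsilon(z+\lambda)$ and $(W - x_i) + \lambda = W - (x_i - \lambda)$ with $x_i - \lambda \in \Gamma_<$ whenever $\lambda \in L_0$. Hence translating Equation \ref{eq: qc1} (resp.\ \ref{eq: qc2}) valid at $z$ by $\lambda \in L_0$ yields the same equation valid at $z + \lambda$, with the same radius and the shifted translates $x_i - \lambda$. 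Thus the quasicanonical condition at a point, together with the set of admissible radii there, depends only on the class of that point in $H / L_0$.

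The conclusion is then a compactness argument. A closed fundamental parallelepiped $\bar D \subset H$ for $L_0$ is compact, and every point of $H$ is an $L_0$-translate of a point of $\bar D$. For each $z \in \bar D$ use Definition \ref{def: qc} to fix a valid radius $\epsilon_z > 0$ and translates; by the openness observation the open balls $B_{\epsilon_z/2}(z)$ cover $\bar D$, so finitely many $B_{\epsilon_{z_1}/2}(z_1), \dots, B_{\epsilon_{z_N}/2}(z_N)$ already cover $\bar D$. Put $\epsilon_0 \coloneqq \min_{1 \le j \le N} \epsilon_{z_j}/2 > 0$. For an arbitrary $z \in H$, write $z = z' + \lambda$ with $z' \in \bar D$ and $\lambda \in L_0$; then $z' \in B_{\epsilon_{z_j}/2}(z_j)$ for some $j$, so the quasicanonical condition holds at $z'$ with radius $\epsilon_{z_j}/2 \ge \epsilon_0$, hence with radius $\epsilon_0$, and by periodicity also at $z$ with radius $\epsilon_0$. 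Finally, running this over the finitely many $H \in \sH$ and taking the minimum of the resulting $\epsilon_0$'s produces a single $\epsilon > 0$ valid for every $H^+ \in \sH^+$ and every $z \in H$.

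The only genuine obstacle is that $H$ is unbounded, so one cannot simply invoke compactness of $H$ directly; the device that resolves this is Lemma \ref{lem: qc => lattice}, which supplies enough lattice directions in $V(H)$ to collapse $H$ to a compact fundamental domain on which the condition is controlled. Everything else---the openness of the condition under a fixed choice of translates and the invariance of half-spaces under $V(H)$-translations---is routine and already implicit in the preceding lemmas.
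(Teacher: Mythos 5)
Your proof is correct and follows essentially the same route as the paper's: both rely on Lemma \ref{lem: qc => lattice} to produce a full-rank lattice in $\Gamma_< \cap V(H)$, use periodicity of the condition under that lattice to reduce to a compact fundamental region, and exploit the fact that Equations \ref{eq: qc1}--\ref{eq: qc2} restrict to smaller balls with the same translates. The only cosmetic difference is that you implement the final compactness step by halving radii over a finite subcover of a fundamental parallelepiped, where the paper extracts a Lebesgue number for the periodised locally finite cover of $H$.
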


\begin{proof} By the lemma above there exists a lattice $\mathscr{L} \leqslant \Gamma_< \cap V(H)$. There is some $R>0$ for which $H$ is covered by $\mathscr{L} + B_R(h)$ for any given $h \in H$. Let $\epsilon(z)$ be the $\epsilon$ required for $z$ in Equation \ref{eq: qc1} or \ref{eq: qc2}. By compactness, there is a finite collection $\mathscr{B}$ of balls of the form $B_{\epsilon(z)}(z)$ which covers $B_R(h) \cap H$ for a now fixed $h \in H$. Therefore, by periodicity and compactness, the locally finite covering $\mathscr{B} + \mathscr{L}$ of balls of the form $B_{\epsilon(z)}(z) + \ell$ for $\ell \in \mathscr{L}$ has a Lebesgue number, some $\lambda > 0$ so that every $z \in H$ is such that $B_\lambda(z)$ is contained in a ball of $\mathscr{B} + \mathscr{L}$. The intersections of Equations \ref{eq: qc1} and \ref{eq: qc2} for some $B_{\epsilon(z)}(z)$ also given a corresponding valid intersection for $B_{\epsilon(z)}(z+\ell)$ for $\ell \in \mathscr{L}$, by replacing each $x_i$ with $x_i - \ell$. We may thus choose our $\epsilon(z) = \lambda$ uniformly, since given $z \in H$ we may choose some ball of $B \in \mathscr{B} + \mathscr{L}$ which contains $B_\lambda(z)$, and then use the same $x_i$ in either Equation \ref{eq: qc1} or \ref{eq: qc2} for the ball $B_\lambda(z)$ as is used for the ball $B$. \end{proof}

The following proposition will be helpful in establishing that acceptance domains for quasicanonical schemes may be identified with regions cut by hyperplanes. Notice that the sets $Z_i$ below are a similar form to the acceptance domains, as defined in Equation \ref{eq: acceptance domain}. It should be interpreted as saying that such subsets can be used to precisely cover $H^+$, ignoring points of the boundary $H$, which are singular by Lemma \ref{lem: qc => face covering}. We denote the interior of $H^+$ by $\iH^+ = H^+ \setminus H$.

\begin{proposition} \label{prop: halfspace covering} For a quasicanonical scheme we have the following. For each $H^+ \in \sH^+$ we may write
\[
\iH^+ = \left(\bigcup_{i = 1}^\infty Z_i\right) \setminus H,
\]
where each $Z_i$ is equal to an intersection
\[
Z_i = \left(\bigcap_{a \in A_i} W-a\right) \cap \left(\bigcap_{b \in B_i} \W-b\right),
\]
for $A_i$, $B_i \subseteq \Gamma_<$ finite sets for each $i \in \N$. Moreover, this collection of sets may be chosen to be locally finite, in the sense that there is some $\kappa \in \N$ for which each element of $\intl$ is contained in at most $\kappa$ sets of the form $W-a$ or $W-b$ for $a \in A_i$ and $b \in B_i$, $i \in \N$.
\end{proposition}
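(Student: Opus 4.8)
The plan is to build the covering from three kinds of pieces: single window translates lying strictly inside $H^+$, used for points far from $H$; the window translates furnished by the quasicanonical condition at points of $H$ satisfying Equation \ref{eq: qc1}; and, near points satisfying only Equation \ref{eq: qc2}, pieces obtained by intersecting a window translate with the $W^c$ translates the condition supplies. Two preliminary reductions are used throughout. By the previous lemma we may take a single radius $\epsilon = \lambda > 0$ valid at every $z \in H$, and by Lemma \ref{lem: qc => lattice} there is a full rank lattice $\mathscr{L} \leqslant \Gamma_< \cap V(H)$. Since $\mathscr{L}$ fixes $H$ and $H^+$ setwise and carries $\Gamma_<$ translates of $W$ and of $W^c$ to translates of the same kind, it is enough to produce the required covering over one fundamental domain of $\mathscr{L}$ acting on a bounded neighbourhood of $H$ and then take all $\mathscr{L}$ translates; local finiteness will follow since each piece is built from a bounded number of uniformly bounded translates, and the $\mathscr{L}$ translates of such a template have bounded local multiplicity. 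Secondly, for any open half-space $U \subseteq \intl$ and any $p \in U$ there is $a \in \Gamma_<$ with $p \in \iW - a$ and $W - a \subseteq U$: shifting $W$ slightly into $U$ from $p$ puts $p$ in its interior while keeping it inside $U$, and density of $\Gamma_<$ in $\intl$ then lets us take the shift to lie in $\Gamma_<$. Each such $W - a$ is an admissible $Z_i$ with $A_i = \{a\}$ and $B_i = \varnothing$, and lies in $H^+$.

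For points away from $H$, fix $\delta_0 > 0$ (constrained below). The set $\{x \in H^+ : d(x,H) \geq \delta_0\}$ lies in $\iH^+$, so by the second observation it is covered by the open sets $\iW - a$ with $a \in \Gamma_<$ and $W - a \subseteq \iH^+$; intersecting with a fundamental domain of $\mathscr{L}$ gives a compact set, hence finitely many such $W - a$ suffice there, and we take all their $\mathscr{L}$ translates. These translates are uniformly bounded, so this part of the covering has bounded local multiplicity.

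It remains to cover the collar $\{x \in \iH^+ : 0 < d(x,H) < \delta_0\}$. Using $\mathscr{L}$-periodicity and compactness, pick finitely many base points $z_1, \dots, z_N \in H$ whose balls $B_\lambda(z_j)$ cover the part of the collar near a fundamental domain with room to spare, each $z_j$ satisfying Equation \ref{eq: qc1} or \ref{eq: qc2}; by Lemma \ref{lem: qc => face covering} we may take the translates there to have their face $F - x_l$ (resp.\ opposite face $F^\op - x_l$) inside $H$ near $z_j$, and note that those coming from Equation \ref{eq: qc1} lie in $H^+$ while those from Equation \ref{eq: qc2} lie in $H^-$. At a base point $z_j$ of the first kind, the translates $W - x_l \subseteq H^+$ already contain $H^+ \cap B_\lambda(z_j)$, and we add them to the covering. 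At a base point $z_j$ of the second kind, Remark \ref{rem: duality} gives $\iH^+ \cap B_\lambda(z_j) = \bigcap_l (W^c - x_l) \cap B_\lambda(z_j)$, and I would cover this set by pieces $(W - a) \cap \bigcap_l (W^c - x_l)$, with $a \in \Gamma_<$ chosen (using density) so that $W - a$ contains the target point in its interior and its face $F - a$ sits just below $H$. Such a piece is contained in $H^+$ inside $B_\lambda(z_j)$, and outside $B_\lambda(z_j)$ it can meet $\iH^-$ only in a thin cap of $W - a$ lying near $H$; this cap is near $z_j$, hence covered by the $\lambda$-balls of the base points close to $z_j$, and there we can excise it by intersecting additionally with the $W^c$ translates of the nearby Equation \ref{eq: qc2} base points and replacing the piece by its (finitely many) intersections with the in-$H^+$ window translates of the nearby Equation \ref{eq: qc1} base points. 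Choosing $\delta_0$ small enough that some $W - a$ with $z_j$ in its interior already contains $B_{\delta_0}(z_j)$, finitely many pieces per base point cover $\iH^+ \cap B_{\delta_0}(z_j)$; taking $\mathscr{L}$ translates then covers the collar with bounded local multiplicity.

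Taken together, the deep pieces, the Equation \ref{eq: qc1} pieces and the Equation \ref{eq: qc2} pieces are all contained in $H^+$ and their union contains every point of $\iH^+$, which gives $\iH^+ = \left(\bigcup_i Z_i\right) \setminus H$; the three uniform multiplicity bounds combine to the required $\kappa$. The crux is the Equation \ref{eq: qc2} case: the naive piece $(W - a) \cap \bigcap_l (W^c - x_l)$ need not lie in $H^+$, since the window may fail to fit inside the neighbourhood $B_\lambda(z_j)$ on which the quasicanonical identity is known and so $W - a$ can protrude past it on the $H^-$ side. Handling this is exactly what forces the careful choice of $a$ together with the correction by the quasicanonical data of the neighbouring base points, using that near every point of $H$ the half-space $H^-$ is either a finite union of $\Gamma_<$ translates of $W$ or the complement of such; one must then check that only finitely many neighbouring base points are involved and that no point of $\iH^+$ is lost in the process.
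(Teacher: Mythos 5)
You correctly identify the sticking point — in the Equation~\ref{eq: qc2} case, a full window translate $W-a$ used as the bounding container for a piece can protrude beyond $B_\lambda(z_j)$ on the $H^-$ side, where the quasicanonical identity gives no control — but the repair you sketch (excising the cap by intersecting with the qc-data of neighbouring base points) is left as an assertion that it can be made to close, and it is genuinely unclear that it does. The cap sits in a region where several $B_\lambda(z_{j'})$ overlap with a mix of qc1 and qc2 types, and intersecting your piece with, say, the in-$H^+$ translates $W - x'_l$ of a nearby qc1 base point shrinks the piece; you would need to argue that the union of these shrunken intersections still recovers all of $\iH^+ \cap (W-a) \cap \bigcap_l (W^c - x_l)$, which is not obvious and is precisely the kind of bookkeeping that tends to fail or to force an infinite regress. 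In short, the last step of your argument is a plan, not a proof.

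The paper dispatches this difficulty with one extra device that your argument is missing: rather than bounding each piece by a single window translate $W-a$ (which has diameter $\mathrm{diam}(W)$, typically much larger than $\epsilon$), it first uses density of $\Gamma_<$ to build a fixed small set $B \coloneqq \bigcap_{q\in Q} W - q$, with $Q \subseteq \Gamma_<$ finite, having diameter less than $\epsilon$ and nonempty interior. Each local piece is then taken inside a translate $B - v(z)$, $v(z)\in\Gamma_<$, which by construction lies entirely inside $B_\epsilon(z)$. Because the container already fits inside the ball on which Equations~\ref{eq: qc1}/\ref{eq: qc2} hold, there is never a protruding cap to excise: in the qc1 case $Z(z)=\bigcup_a (B-v(z))\cap(W-a) \subseteq H^+$ directly, and in the qc2 case $Z(z)=(B-v(z))\cap\bigcap_b(\iW-b)\subseteq H^+\setminus H$ directly, and both contain $\iH^+\cap B_{\lambda/2}(z)$. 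Local finiteness is then automatic from the lattice $\mathscr L$ and the uniform boundedness of $Q$, $A(z)$, $B(z)$. I suggest you replace the single window container $W-a$ by such a small $\Gamma_<$-intersection $B$; with that change your decomposition into a deep region and a collar becomes unnecessary (the paper simply covers a neighbourhood of $H$ by the $Z(z_i)+\ell$ and then fills in the rest of $\iH^+$ with window translates lying strictly inside $H^+$, as you also do), and the proof closes cleanly.
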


\begin{proof} According to the previous lemma we may take $\epsilon > 0$ to not depend on $z$ in Equations \ref{eq: qc1} and \ref{eq: qc2}. By density of $\Gamma_<$, we may choose a finite subset $Q \subseteq \Gamma_<$ for which the intersection
\[
B \coloneqq \bigcap_{q \in Q} W - q
\]
has diameter less than $\epsilon$ but has non-empty interior, so contains some $\lambda$-ball.

For $z \in H$, choose $v(z) \in \Gamma_<$ so that $B-v(z)$ contains a $\lambda/2$-neighbourhood of $z$. Choose $x_i \in \Gamma_<$ according to those given by either Equation \ref{eq: qc1} or \ref{eq: qc2}. In the former case we denote the set of $x_i$ by $A(z)$ and let
\[
Z(z) = \bigcup_{a \in A(z)} (B-v(z)) \cap (W-a)
\]
and in the latter case we denote the set of $x_i$ by $B(z)$ and let
\[
Z(z) = (B-v(z)) \cap \left(\bigcap_{b \in B(z)} \W - b\right).
\]
In the former case, by Equation \ref{eq: qc1}, we have that $Z(z) \subseteq H^+$, since $B-v(z) \subseteq B_\epsilon(z)$, as $z \in B-v(z)$ and this set has diameter less than $\epsilon$. Similarly, $Z(z) \supseteq H^+ \cap B_{\lambda/2}(z)$, as $B-v(z)$ was assumed to contain a $\lambda/2$ neighbourhood of $z$ and $B-v(z)$ has diameter less than $\epsilon$ so that Equation \ref{eq: qc1} applies. In the latter case, we have instead that
\[
(H^+ \setminus H) \cap B_\epsilon(z) = \left(\bigcap_{b \in B(z)} W^c - b \right) \cap B_\epsilon(z)
\]
by Equation \ref{eq: qc opp}. By an essentially identical argument to above, we see that $Z(z) \subseteq (H^+ \setminus H)$ and $Z(z) \supseteq (H^+ \setminus H) \cap B_{\lambda/2}(z)$. So in either case
\[
((H^+ \setminus H) \cap B_{\lambda/2}(z)) \subseteq Z(z) \subseteq H^+.
\]

By Lemma \ref{lem: qc => lattice}, $\Gamma_< \cap V(H)$ contains a lattice $\mathscr{L}$ for $V(H)$. By a similar argument to the proof of the previous lemma, we may choose a finite collection of points $z_1$, $z_2$, \ldots, $z_n \in H$ so that the balls $B_{\lambda/2}(z_i + \ell)$, for $\ell \in \mathscr{L}$, cover some $\delta$-neighbourhood of $H$ in $\intl$. For those $z_i$ using Equation \ref{eq: qc1}, we have that
\[
Z(z_i) + \ell = \bigcup_{a \in A(z_i)} \left(\left(\bigcap_{q \in Q} W-q-v(z_i)+\ell\right) \cap (W-a+\ell)\right)
\]
and for those $z_i$ needing Equation \ref{eq: qc2} we have that
\[
Z(z_i) + \ell = \left(\bigcap_{q \in Q} W-q-v(z_i)+\ell\right) \cap \left(\bigcap_{b \in B(z_i)} W^c-b+\ell\right).
\]
In either case we have a finite union of intersections of the desired form. They are contained in $H^+$ and give a locally finite covering of $H^+$ restricted to a $\delta$-neighbourhood of $H$, at least after removing $H$. By density of $\Gamma_<$, we can take further translates by $\Gamma_<$ of $W$ to extend this to a locally finite covering of the whole of $H^+ \setminus H$.
\end{proof}

\section{Cut regions} \label{sec: cut regions}

In this section we discuss a way of analysing the complexity of cut and project sets, not through acceptance domains, but through cut regions, defined below. In Section \ref{sec: complexity} we will prove complexity bounds that hold under the quasicanonical condition, and in Section \ref{sec: removing the quasicanonical condition} we analyse acceptance domains directly, without the intermediate step given by Corollary \ref{cor: acceptance <-> cuts}. The reason for including the discussion on cut regions under the quasicanonical condition is two-fold. Firstly, there are some instances in the literature where it is mistakenly assumed that the conclusion of Corollary \ref{cor: acceptance <-> cuts} holds, and we wanted to provide full proofs in a very general setting where this corollary is indeed true. In Section \ref{sec: troublesome scheming} we give examples to show that Corollary \ref{cor: acceptance <-> cuts} does indeed fail under the commonly used almost canonical condition. This will hopefully help clear some of the misunderstandings surrounding the analysis of cut and project sets, and gives a reference for the correct proofs. The second reason is that there are other situations where it is extremely useful to link cut regions to acceptance domains.

Recall the `slab' of lattice points $\cS(r)$ from Equation \ref{eq: slab}. We similarly define the `box' of lattice points:
\begin{equation}\label{eq: box}
\cB(r) \coloneqq \{\gamma \in \Gamma \mid \|\gamma_\vee\|, \|\gamma_<\| \leq r\}.
\end{equation}
Note that there are constants such that $\cB(c_1 r) \subseteq \Gamma \cap B_r \subseteq \cB(c_2 r)$, so one might prefer to imagine instead the ball of lattice points within radius $r$ of the origin; using $\cB(r)$ will simply be more convenient for the proofs to follow. As above we let $\cS_<(r) \coloneqq \pi_<(\cS(r))$ and similarly $\cB_<(r) \coloneqq \pi_<(\cB(r))$.

Consider the translates of the hyperplanes $H \in \sH$ by the elements of $\cB_<(r)$. Removing these hyperplanes from $W$ cuts the window into various connected components which we call a {\bf cut region}. We denote the set of them by $\sC(r)$. We similarly define $\sC'(r)$, removing instead $\cS_<(r)$ translates. We aim to approximate the acceptance domains $\sA(r)$ using the cut regions $\sC(r)$. The cut regions $\sC'(r)$ will serve as intermediates in establishing this connection.

\begin{definition} Given two collections $\sF_1$ and $\sF_2$ of subsets of ${\intl}$, we say that {\bf $\sF_1$ refines $\sF_2$} and write $\sF_1 \preceq \sF_2$ if for all $F_1 \in \sF_1$ we have $F_1 \subseteq F_2$ for some $F_2 \in \sF_2$. \end{definition}

\begin{proposition} \label{prop: acc <-> cuts} For a polytopal cut and project scheme we have that:
\begin{enumerate}
	\item $\sC(r) \preceq \sC'(r)$ for sufficiently large $r$;
	\item $\sC'(r) \preceq \sA(r)$ for all $r$.
\end{enumerate}
Moreover, if the scheme is quasicanonical, then
\begin{enumerate}\setcounter{enumi}{2}
	\item there exists $c > 0$ for which $\sA(r+c) \preceq \sC'(r)$ for sufficiently large $r$;
	\item there exist $c',\kappa > 0$ for which $\sC'(\kappa r + c') \preceq \sC(r)$ for all $r$.
\end{enumerate}
So for a quasicanonical scheme we have that for sufficiently large $r$:
\[
\sC(\kappa r + c + c') \preceq \sC'(\kappa r + c + c') \preceq \sA(\kappa r + c + c') \preceq \sC'(\kappa r + c') \preceq \sC(r).
\]
\end{proposition}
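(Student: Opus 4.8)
The plan is to establish the four refinements (1)--(4) and then read off the displayed chain by transitivity of $\preceq$, substituting $r \mapsto \kappa r + c + c'$ into (1)--(2) (legitimate once this is large) and $r \mapsto \kappa r + c'$ into (3). Parts (1) and (2) are elementary. For (1): since $W - W \subseteq B_{\operatorname{diam} W}(0)$ we have $\cS(r) \subseteq \cB(r)$ once $r \geq \operatorname{diam} W$, so $\cS_<(r) \subseteq \cB_<(r)$ and $\sC(r)$ is cut from $W$ by a superset of the hyperplane translates defining $\sC'(r)$; each component of the finer dissection sits inside a component of the coarser one. For (2): each cut region $C' \in \sC'(r)$ avoids the $\cS_<(r)$-translates of the hyperplanes $\sH$, so for each $\gamma \in \cS(r)$ the connected set $C'$ lies strictly on one side of $H + \gamma_<$ for every $H$, whence $C' \subseteq \iW - \gamma_<$ or $C' \subseteq \W - \gamma_<$. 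Collecting these into $P_{\mathrm{in}}, P_{\mathrm{out}} \subseteq \cS(r)$, the set $A \coloneqq \bigcap_{P_{\mathrm{in}}}(\iW - \gamma_<) \cap \bigcap_{P_{\mathrm{out}}}(\W - \gamma_<)$ contains $C'$; picking $\gamma \in \Gamma$ with $\gamma_< \in C'$ (density of $\Gamma_<$), this $\gamma_<$ is non-singular (our parameter $0$ being non-singular) and $y \coloneqq \gamma_\vee \in \cps$ has $y^* = \gamma_<$, so the in/out data of the $r$-patch at $y$ is exactly $(P_{\mathrm{in}}, P_{\mathrm{out}})$ and $A = A_{P(y,r)} \in \sA(r)$ with $C' \subseteq A$. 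Since the boundary of any $A_P$ lies in these same hyperplane translates (formula \ref{eq: acceptance domain}), each $A_P$ is, up to singular points, a union of cut regions, and (2) follows.

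The crux is (3). First note a cut region $C' \in \sC'(r)$ is \emph{convex}: it lies strictly on one side of each $H + x$ ($x \in \cS_<(r)$, $H \in \sH$), and $\iW$ intersected with all these open half-spaces is a convex open set, disjoint from every cutting hyperplane and containing $C'$, hence equal to the component $C'$; thus $C' = \iW \cap \bigcap_{x,H}(\text{chosen open side of }H + x)$. Proposition \ref{prop: halfspace covering} --- which holds also with $\iH^-$ in place of $\iH^+$, by the same proof, since Remark \ref{rem: duality} lets one express either open half-space near a point of $H$ as a union of $\Gamma_<$-translates of $W$ or as an intersection of $\Gamma_<$-translates of $\W$ --- writes each chosen side, away from its (singular) bounding hyperplane $H + x$, as a locally finite union of $\Gamma_<$-translates of sets $Z_i$, each a finite intersection of $\Gamma_<$-translates of $W$ and $\iW$. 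Intersecting over $(x,H)$ and distributing, $C'$ becomes, up to singular points, a union of sets $D_j$, each a finite intersection of $\Gamma_<$-translates of $W$ and $\iW$. Crucially these translates have controlled lifts: the $Z_i$ relevant near $C' \subseteq W$ meet $W - x$ for some $x \in \cS_<(r) \subseteq W - W$, hence meet a fixed bounded set, hence are finitely many by local finiteness, so their internal displacements form a fixed finite subset of $\Gamma_<$ with fixed lifts of $\vee$-norm $\leq c$; and each appears in $D_j$ shifted by $x = \gamma_< \in \cS_<(r)$ with $\|\gamma_\vee\| \leq r$, so the resulting displacement lifts into $\cS(r+c)$. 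Now take $\gamma \in \Gamma$ with $z \coloneqq \gamma_< \in C'$, set $y \coloneqq \gamma_\vee \in \cps$, and choose $j$ with $z \in D_j$. Every constraint defining $D_j$ is of ``in'' type (``$z' \in W - v$'' reads ``$z' + v \in W$'', similarly for $\iW$) with displacement lifting into $\cS(r+c)$; since $z = y^*$ satisfies all of them, these displacements all lie in $P(y,r+c)_{\mathrm{in}}$, so \ref{eq: acceptance domain} gives $A_{P(y,r+c)} \subseteq D_j \subseteq \overline{C'}$. As $C'$ is open and convex ($\overline{C'}^{\,\circ} = C'$) and $A_{P(y,r+c)}$ is open, we get $A_{P(y,r+c)} \subseteq C'$, proving (3).

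For (4) it suffices to reproduce inside $W$ every \emph{effective} cut hyperplane of $\sC(r)$ --- each $H + x$ with $H \in \sH$, $x \in \cB_<(r)$, $(H+x)\cap W \neq \emptyset$ --- as a cut hyperplane of $\sC'(\kappa r + c')$; then the dissection defining the latter refines that defining $\sC(r)$, hence so do the regions. Fix such $H + x$ and $w_0 \in (H+x)\cap W$, so $w_0 - x \in H$. By Definition \ref{def: qc} and Lemma \ref{lem: qc => face covering}, either there is $x_j \in \Gamma_<$ with $(w_0 - x) + x_j \in F \coloneqq W\cap H$, or $F$ has an opposite face $F^{\op} \subseteq H^{\op} \in \sH$ and there is $x_j \in \Gamma_<$ with $(w_0-x)+x_j \in F^{\op}$. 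Setting $x'' \coloneqq x - x_j$: then $w_0 - x'' = (w_0-x)+x_j$ lies in $F$ (resp.\ $F^{\op}$) $\subseteq W$, so $x'' = w_0 - (w_0-x'') \in W - W$; and $x_j \in F - (w_0-x) \subseteq V(H)$ gives $H + x'' = H + x$, while in the second case $x_j \in F^{\op} - (w_0-x)$ lies parallel to $V(H)$ at the displacement of $H^{\op}$ from $H$, giving $H^{\op} + x'' = H + x$. Finally $\|x_j\| \leq \|x\| + \operatorname{diam} W \leq r + \operatorname{diam} W$, and since $x_j$ lies in $\Gamma_< \cap V(H)$ (resp.\ a fixed coset of it), a full-rank lattice in $V(H)$ by Lemma \ref{lem: qc => lattice}, on whose relevant lift $\|\cdot\|_<$ and $\|\cdot\|_\vee$ are comparable, $x_j$ has a lift of $\vee$-norm $O(r)$; combined with $\|\gamma_\vee\| \leq r$ for a lift $\gamma$ of $x$, this yields $x'' \in \cS_<(\kappa r + c')$ for suitable $\kappa, c'$. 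Transitivity of $\preceq$ then gives the displayed chain.

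The only substantial step is (3): translating a geometric cut region into a union of window-translates is exactly the force of the quasicanonical condition via Proposition \ref{prop: halfspace covering}, but three points require care --- extending that proposition to opposite half-spaces (Remark \ref{rem: duality}); ensuring the radius shift $c$ is a genuine constant, which forces one to bound the $\vee$-norms of the lifts of all translates that arise, exploiting local finiteness together with the fact that the $x$-shifts already live in $\cS_<(r)$ and so lift with $\vee$-norm $\leq r$; and handling the potential disconnectedness of acceptance domains, for which the convexity of cut regions is the way out. Step (4) reuses the norm-comparison estimate but is otherwise bookkeeping with the face-covering, and (1)--(2) are routine.
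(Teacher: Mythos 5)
Parts (1)--(3) of your argument follow the paper's proof in all essentials. In (3) you appeal to a symmetrised form of Proposition \ref{prop: halfspace covering} for the opposite open half-space, where the paper instead complements $Z(H)$ and distributes the resulting intersection of unions; both routes work, and your bookkeeping for the constant $c$ (only the finitely many $Z_i$ meeting the fixed bounded set $W+(W-W)$ are relevant, by local finiteness) is the same device the paper uses. One small slip: the sets $Z_i$ of Proposition \ref{prop: halfspace covering} are intersections of translates of $W$ \emph{and of $\W$}, so not every constraint defining your $D_j$ is of ``in'' type. This is harmless: the acceptance domain of the $(r+c)$-patch at $y$ imposes, for every element of $\cS(r+c)$, exactly the constraint ($\iW$- or $\W$-translate) that $z=y^*$ satisfies, so it is still contained in $D_j$, and your convexity argument for $C'$ then closes the step.

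Part (4), however, has a genuine gap. Having bounded $\|x_j\|\leq r+\operatorname{diam} W$, you assert that $\Gamma_<\cap V(H)$ is a full-rank lattice in $V(H)$ on whose lift $\|\cdot\|_<$ and $\|\cdot\|_\vee$ are comparable, and conclude that the lift of $x_j$ has $\vee$-norm $O(r)$. Both assertions fail in general. Lemma \ref{lem: qc => lattice} only says that $\Gamma_<\cap V(H)=\Gamma^H_<$ \emph{contains} a basis of $V(H)$; whenever $\rk(H)>k-d-1$ --- precisely the low-complexity cases the theorem is about --- the group $\Gamma^H_<$ is dense in its span rather than discrete, and there are $\gamma\in\Gamma^H$ with $\|\gamma_<\|$ arbitrarily small while $\|\gamma_\vee\|$ is forced to be arbitrarily large by discreteness of $\Gamma$; so the two projected norms are not comparable on $\Gamma^H$. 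Moreover the $x_j$ supplied by Definition \ref{def: qc} at the point $w_0-x$ is merely \emph{some} element of $\Gamma_<$ with $(w_0-x)+x_j\in F$, with no a priori control on the $\vee$-norm of its lift. The paper's proof exists precisely to avoid this: it first fixes a rank-$(k-d-1)$ sublattice of $\Gamma^H$ projecting to a genuine lattice in $V(H)$ --- on that sublattice the integer coefficients, hence the $\vee$-norms of lifts, \emph{are} controlled by the $<$-norm --- and subtracts an element $s$ of it so that $\|\gamma_<\|\leq c_1$ and $\|(\gamma-s)_\vee\|\leq\kappa r+c_2$; only then does it invoke the face-covering, now with a \emph{fixed finite} collection of translates $s_i$ (enough to cover the bounded set $X_H$), whose lifts have uniformly bounded $\vee$-norm. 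You need to interpose this reduction before appealing to the quasicanonical covering; as written, the conclusion $x''\in\cS_<(\kappa r+c')$ is unjustified.
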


The above proposition shows that for quasicanonical schemes, so long as we do not mind introducing linear factors, we may replace study of the acceptance domains with the study of the simpler cut regions. Its main conclusion may be restated as follows:

\begin{corollary} \label{cor: acceptance <-> cuts} Consider a quasicanonical cut and project scheme. There exist $\kappa, c>0$ for which the following holds for sufficiently large $r$:
\[
\sC(\kappa r + c) \preceq \sA(\kappa r + c) \preceq \sC(r).
\]
That is, for sufficiently large $r$:
\begin{enumerate}
	\item if the $r$-patches at $x$, $y \in \cps$ disagree then $x^*$ and $y^*$ are separated by a hyperplane $H \in \sH$ translated by an element of $\cB_<(r)$;
	\item conversely, if the $(\kappa r+c)$-patches at $x$, $y \in \cps$ agree, then these points are not separated by any such hyperplane.
\end{enumerate}
\end{corollary}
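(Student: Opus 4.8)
The strategy is to establish the four refinement statements separately and then chain them together. For (1), the point is that the slab $\cS(r)$ contains the box-like set $\cB(r)$ up to a linear scaling (and conversely a large enough box contains the slab), since $\cS(r) = \Gamma \cap (B_r + (W-W))$ and $W-W$ is a fixed bounded set: there is a constant such that $\cB(r) \subseteq \cS(r) \subseteq \cB(c r)$ for large $r$. More translates used to cut the window yield a finer partition, so $\sC(r) \preceq \sC'(r)$ follows once $\cB_<(r) \supseteq \cS_<(r)$, i.e. for sufficiently large $r$ after rescaling — here one must be slightly careful that it is $\cB(r)$, not $\cS(r)$, that is larger, so the precise bookkeeping of constants matters, but it is routine. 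For (2), I would observe directly from Equation \ref{eq: acceptance domain} that each acceptance domain $A_P$ is an intersection of half-spaces of the form $\iW - \gamma_<$ and $\W - \gamma_<$ with $\gamma \in \cS(r)$; its boundary therefore lies in a union of $\cS_<(r)$ translates of $\partial W$, hence of the hyperplanes of $\sH$. Thus any cut region $C' \in \sC'(r)$, being a connected component of $W$ minus these hyperplanes, cannot cross any $\partial A_P$, so it lies inside a single acceptance domain: $\sC'(r) \preceq \sA(r)$. Part (4) is similar to (1) with the roles reversed: $\cS(r)$ is contained in $\cB(c_2 r)$, but cutting with more hyperplanes only refines, so in fact $\sC'(\kappa r + c') \preceq \sC(r)$ needs the reverse containment $\cS_<(\kappa r) \supseteq \cB_<(r)$, again a matter of choosing constants; I would phrase it carefully to make sure the inclusion goes the right way.

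The substantive step is (3), where the quasicanonical hypothesis is essential, and this is where I expect the main obstacle. The issue, highlighted in the ``hoopla'' discussion, is that an acceptance domain $A_P$ need not be convex or even connected, and a priori it could be cut by an $\cS_<(r)$ translate of a hyperplane $H$ even though $P$ is a genuine patch — this is exactly the failure under the merely almost canonical condition. The plan is to use Proposition \ref{prop: halfspace covering}: for each $H^+ \in \sH^+$ we can cover $\iH^+$ by a \emph{locally finite} family of sets $Z_i$, each an intersection of finitely many $\Gamma_<$ translates of $W$ and $\W$. The key point is that membership of $y^*$ in such a $Z_i$ is, via Lemma \ref{lem: displacements}, detectable from the patch at $y$ of some uniformly bounded radius $r + c$: to know $y^* \in W - a$ one checks whether $y + a_\vee \in \cps$, and local finiteness (the constant $\kappa$) bounds how many such checks are relevant near any point, hence bounds the norms of the displacements $a_\vee$, $b_\vee$ involved by a constant $c$ depending only on the scheme. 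Therefore, if the $(r+c)$-patches at $x$ and $y$ agree, then for every hyperplane-translate $H + v$ with $v \in \cS_<(r)$, the points $x^*$ and $y^*$ lie on the same side of it: translating the covering of Proposition \ref{prop: halfspace covering} by $v$, the patch data of radius $r+c$ determines which $Z_i + v$ the point lies in, hence which side of $H + v$. Consequently $x^*$ and $y^*$ lie in the same cut region of $\sC'(r)$, giving $\sA(r+c) \preceq \sC'(r)$ after noting that the acceptance domain of the $(r+c)$-patch, being contained in that of the $r$-patch, is determined by the side-of-hyperplane data.

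Finally, chaining: by (4), $\sC'(\kappa r + c + c') \preceq \sC(\kappa r + c)$; wait — more carefully, I apply (1), (2), (3), (4) in the order that produces the displayed chain
\[
\sC(\kappa r + c + c') \preceq \sC'(\kappa r + c + c') \preceq \sA(\kappa r + c + c') \preceq \sC'(\kappa r + c') \preceq \sC(r),
\]
using (1) for the first $\preceq$ (with argument $\kappa r + c + c'$), (2) for the second, (3) for the third (applied at radius $\kappa r + c'$, so that $\kappa r + c' + c$ appears), and (4) for the last. One has to track that all the ``sufficiently large $r$'' thresholds can be met simultaneously and that the additive and multiplicative constants compose in the claimed way; this is bookkeeping rather than mathematics. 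The corollary \ref{cor: acceptance <-> cuts} is then immediate by reading off the outer two terms of the chain and absorbing constants, and the two enumerated consequences are just the unwinding of the definition of refinement together with the fact that $x^*$, $y^*$ lie in the same member of $\sA(r)$ precisely when the $r$-patches at $x$ and $y$ agree (Corollary \ref{cor: acceptance domains}).
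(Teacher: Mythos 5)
Your outlines for parts (1)--(3) are essentially the route the paper follows. In (1) you write $\cB(r) \subseteq \cS(r)$, which is false; but you flag your uncertainty, and the inclusion you actually need, $\cS(r) \subseteq \cB(r)$ for $r$ large enough that $W-W \subseteq B_r$, is the correct one and makes (1) immediate. Part (3) is the substantive step, and your idea is the paper's: use Proposition \ref{prop: halfspace covering} to write the bounding half-spaces of a cut region as unions of sets $Z_i$ built from $\Gamma_<$-translates of $W$ and $\W$, note via Lemma \ref{lem: displacements} that membership of a point $y^*$ in a translated $Z_i + v$ is determined by the $(r+c)$-patch at $y$ (with $c$ the uniform bound on $\|a^\wedge_\vee\|$ over the finitely many translates relevant near the window), and conclude the refinement. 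Your phrasing (``each acceptance domain sits inside one cut region'') is dual to the paper's (``each cut region contains an acceptance domain''), but since both families tile $W$ up to the singular set, the two are equivalent.

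The genuine gap is in part (4). You describe it as ``similar to (1) with roles reversed'' and ``a matter of choosing constants'', and you say what is needed is $\cS_<(\kappa r) \supseteq \cB_<(r)$. That inclusion is false for every $\kappa$ once $r$ is large: the internal projections $\gamma_<$ of points $\gamma \in \cS(\kappa r)$ are constrained to lie in the fixed bounded set $W-W$, whereas $\cB(r)$ contains lattice points with $\|\gamma_<\|$ as large as $r$, so no rescaling helps. What is true---and what the paper proves---is a statement about the resulting \emph{hyperplane cuts of $W$}, not about the translate sets: any cut $H + \gamma_<$ with $\gamma \in \cB(r)$ that meets $W$ can be rewritten as $H' + \gamma'_<$ with $\gamma' \in \cS(\kappa r + c')$ and $H' \in \sH$. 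Establishing this requires the quasicanonical hypothesis in an essential way: Lemma \ref{lem: qc => lattice} gives that $\Gamma^H_<$ spans $V(H)$, so one can subtract from $\gamma$ an element $s \in \Gamma^H$ (leaving the cut $H+\gamma_<$ unchanged) to make $\|(\gamma - s)_<\|$ bounded, and a linear-algebra estimate shows $\|(\gamma-s)_\vee\|$ grows at most linearly in $r$; a further correction using Lemma \ref{lem: qc => face covering} (possibly switching $H$ for its opposite $H^\op$) lands the internal component in $W-W$. So (4) is the second place the quasicanonical condition is load-bearing, and your proposal supplies no argument for it.
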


Before proving Proposition \ref{prop: acc <-> cuts} we introduce some important notation:

\begin{definition} \label{def: stabiliser} For a polytopal cut and project scheme we define the {\bf stabiliser} of $H \in \sH$ as
\[
\Gamma^H \coloneqq \{\gamma \in \Gamma \mid H = H + \gamma_<\}.
\]
\end{definition}

Alternatively, $\Gamma^H = \Gamma \cap (V(H)+\phy)$ and $\Gamma_<^H \coloneqq \pi_<(\Gamma^H)$ is given by $\Gamma_< \cap V(H)$ (recall that $V(H) = H - H$ is the vector subspace parallel to $H$). As each $\Gamma^H$ is a subgroup of $\Gamma \cong \Z^k$, it is free Abelian group of some rank $\rk(H) \coloneqq \rk(\Gamma^H)$. By Lemma \ref{lem: qc => lattice}, $\Gamma_<^H$ spans $V(H)$ for a quasicanonical scheme so $\rk(H) \geq k-d-1$. As we shall see in the next section, we need these ranks to be as large as possible to obtain cut and project sets of low complexity.

The next statement is the main conclusion of this section. It allows us to bound the complexity function in terms of the numbers of cut regions, in the quasicanonical case. It is an immediate corollary of Corollaries \ref{cor: acceptance domains} and \ref{cor: acceptance <-> cuts}, since each $r$-patch corresponds to an acceptance domain of $\sA(r)$.

\begin{corollary} \label{cor: complexity versus cuts} For a quasicanonical cut and project scheme there exist $\kappa, c > 0$ for which, for sufficiently large $r$,
\[
\# \sC(r) \leq p(\kappa r+c) \leq \# \sC(\kappa r+c).
\]
\end{corollary}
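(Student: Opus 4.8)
The statement to prove, Corollary~\ref{cor: complexity versus cuts}, is flagged in the excerpt as an immediate consequence of Corollary~\ref{cor: acceptance domains} and Corollary~\ref{cor: acceptance <-> cuts}, so the proof proposal is really a matter of assembling these pieces cleanly rather than any new idea. Here is the plan.

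\medskip

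\noindent\textbf{Proof plan.} The plan is to translate the refinement chain of Corollary~\ref{cor: acceptance <-> cuts} into counting inequalities, using the fact (from Corollary~\ref{cor: acceptance domains} and the subsequent discussion) that $p(r) = \#\sA(r)$, i.e.\ that translation classes of $r$-patches are in bijection with acceptance domains of $r$-patches. Let $\kappa, c > 0$ be the constants supplied by Corollary~\ref{cor: acceptance <-> cuts}, so that for sufficiently large $r$ we have $\sC(\kappa r + c) \preceq \sA(\kappa r + c) \preceq \sC(r)$. First I would record the elementary observation that refinement controls cardinality for these particular families: the members of $\sA(r)$, of $\sC(r)$ and of $\sC'(r)$ are pairwise disjoint and their closures cover $W$ (this is stated for $\sA(r)$ right after Corollary~\ref{cor: acceptance domains}, and for the cut regions it is immediate since they are the connected components of $W$ minus a finite union of hyperplane translates). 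Hence if $\sF_1 \preceq \sF_2$ with both families consisting of disjoint sets whose closures tile $W$, then $\#\sF_2 \leq \#\sF_1$: each $F_2 \in \sF_2$ contains some $F_1 \in \sF_1$, the assignment $F_2 \mapsto F_1$ is injective because distinct $F_2$'s are disjoint so cannot both contain the same (nonempty, open) $F_1$, and every member of $\sF_2$ is nonempty.

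\medskip

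\noindent Applying this observation twice to the chain $\sC(\kappa r + c) \preceq \sA(\kappa r + c) \preceq \sC(r)$ gives, for sufficiently large $r$,
\[
\#\sC(r) \;\leq\; \#\sA(\kappa r + c) \;\leq\; \#\sC(\kappa r + c).
\]
Now substitute $p(\kappa r + c) = \#\sA(\kappa r + c)$, which is exactly the content of Corollary~\ref{cor: acceptance domains} together with the definition of the complexity function (each translation class of $(\kappa r+c)$-patch $P$ is named by its acceptance domain $A_P \in \sA(\kappa r+c)$, and distinct classes give distinct domains). This yields precisely
\[
\#\sC(r) \;\leq\; p(\kappa r + c) \;\leq\; \#\sC(\kappa r + c),
\]
which is the claim.

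\medskip

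\noindent\textbf{Anticipated obstacle.} There is essentially no deep obstacle here — all the substance has been front-loaded into Corollary~\ref{cor: acceptance <-> cuts} (which in turn rests on Proposition~\ref{prop: acc <-> cuts} and hence on the quasicanonical machinery of Section~\ref{sec: quasicanonical cut and project schemes}). The only point requiring a line of care is the ``refinement implies a cardinality bound in the right direction'' lemma: one must be a little careful about which way the inequality goes, and that it relies on the tiling/disjointness property rather than on refinement alone (a refinement of an arbitrary family need not have more members). It is also worth noting explicitly that the constants $\kappa, c$ in the conclusion are the same ones produced by Corollary~\ref{cor: acceptance <-> cuts}, so nothing new needs to be chosen. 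Beyond that, the argument is a one-paragraph bookkeeping exercise.
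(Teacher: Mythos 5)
Your proof is correct and takes the same route as the paper, which treats the result as an immediate consequence of Corollaries \ref{cor: acceptance domains} and \ref{cor: acceptance <-> cuts}; you have simply filled in the bookkeeping that the paper leaves implicit (that refinement between disjoint families whose closures tile $W$ reverses cardinality, and that $p(r)=\#\sA(r)$).
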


\subsection{Proof of Proposition \ref{prop: acc <-> cuts}}

\subsubsection{Overview} The proofs of (1), that $\sC(r) \preceq \sC'(r)$ for sufficiently large $r$, and (2), that $\sC'(r) \preceq \sA(r)$ for all $r$, are trivial.

The proof of (3), that $\sA(r+c) \preceq \sC'(r)$ for sufficiently large $r$, runs as follows. A cut region of $\sC'(r)$ is the intersection of translates of (interiors of) half-spaces from $\sH^+$, or their opposites, from vectors of $\cS_<(r)$. Using Proposition \ref{prop: halfspace covering} we replace these half-spaces with unions of intersections like those defining the acceptance domains. Since vectors of $\cS_<(r)$ are within a bounded radius of the origin, we only need to consider finitely many elements of $\Gamma_<$ defining these regions, which leads to the value of $c$.

The proof of (4), that $\sC'(\kappa r + c') \preceq \sC(r)$, uses the fact that each $\Gamma^H_<$ contains a lattice. So a translated hyperplane defining a cut region of $\sC(r)$ can be replaced with a translate of $\Gamma_<$ within a bounded distance of the origin. A further argument using the quasicanonical condition shows that this element can be chosen in $W-W$.

\begin{proof}[Proof of (1)]
For $r$ large enough so that $W-W \subseteq B_r$, we have that $\cS(r) \subseteq \cB(r)$. Therefore for such $r$, elements of $\sC(r)$ are defined by making extra cuts to those from $\sC'(r)$, so $\sC(r) \preceq \sC'(r)$. \end{proof}

\begin{proof}[Proof of (2)]
Let $A \in \sA(r)$. Any cut region $C \in \sC'(r)$ containing $x_0 \in A$ is wholly contained in $A$. Indeed, by Equation \ref{eq: acceptance domain}, the boundary of $A$ is contained in a union of $\cB_<(r)$ translates of $\partial W$. We have that $\partial W \subseteq \bigcup_{\sH} H$, so $\partial A$ is contained in the union of cut hyperplanes translated by the elements of $\cB_<(r)$. Any other $y \in C$ is connected to $x_0 \in C$ by a path which avoids $\partial C$, and therefore avoids $\partial A$, and so $y \in A$ too, hence $C \subseteq A$. \end{proof}

\begin{proof}[Proof of (3)]
For each $H \in \sH$, choose a finite number of sets $Z_i(H) \subseteq H^+$, for $i=1,\ldots,n(H)$, satisfying the following:
\begin{enumerate}
	\item each $Z_i(H)$ is a finite intersection of $\Gamma_<$ translates of $W$ and $W^c$;
	\item the union $Z(H) = \bigcup_{i=1}^{n(H)} Z_i(H)$ satisfies $(Z(H) \setminus H) \cap (W+(W-W)) = \iH^+ \cap (W+(W-W))$.
\end{enumerate}
Such a covering exists by Proposition \ref{prop: halfspace covering}. Let $c$ be the maximum of the norms of the elements of $\Gamma_<$ needed in constructing all of these sets, once lifted and then projected to $\phy$. It easily follows that
\begin{equation} \label{eq: union of Z}
(\iH^+ + \gamma) \cap W = ((Z(H) \setminus H) +\gamma) \cap W
\end{equation}
and
\begin{equation} \label{eq: union of Z^c}
((H^+)^c+\gamma) \cap W = ((Z(H)^c \setminus H)+\gamma) \cap W
\end{equation}
for any $\gamma \in W-W$.

We claim that $\sA(r+c) \preceq \sC'(r)$. So take $C \in \sC'(r)$ and let $x_0 \in C$ be a non-singular point of $C$. We may write $C \subseteq W$ as a finite intersection
\begin{equation} \label{eq: cut half-spaces}
C = \bigcap_X (X + \gamma(X)_<),
\end{equation}
where each $X = \iH^+$ or $X = (H^+)^c$ for some $H^+ \in \sH^+$, and each $\gamma(X) \in \cS(r)$. This implies that each $\gamma(X)_< \in W-W$. So for each $X$, there is some $H \in \sH$ for which we may write
\[
(X + \gamma(X)_<) \cap W = (\iH^+ + \gamma(X)_<) = ((Z(H) \setminus H) + \gamma(X)_<) \cap W,
\]
by Equation \ref{eq: union of Z} or, by Equation \ref{eq: union of Z^c},
\[
(X + \gamma(X)_<) \cap W = ((H^+)^c + \gamma(X)_<) = ((Z(H)^c \setminus H) + \gamma(X)_<) \cap W.
\]

In the former case, choose some $i$ so that $x_0 \in Z_i(H) + \gamma(X)_<$. Note that every point of $H + \gamma(X)_<$ is singular by Lemma \ref{lem: qc => face covering}, so there is such an $i$, as the union of the shifts of the $Z_i(H)$ covers the open half-space $X + \gamma(X)_<$ which contains $x_0$. Replacing each term $W$ with $\iW$ in the intersection defining $Z_i(H)$, this is an open subset which is contained in $X+\gamma(X)_<$. So we have found an intersection of translates under $\Gamma_<$ of $\iW$ and $W^c$ which is contained in the half-space $X + \gamma(X)_<$ and contains $x_0$, where each translation vector is the sum of $\gamma(X)_<$ and one of the finite number of translates defining the $Z_i(H)$.

In the latter case, we may write:
\[
(X + \gamma(X)_<) \cap W = \left(\bigcap_{i=1}^{n(H)} (Z_i(H)^c \setminus H) + \gamma(X)_<\right) \cap W.
\]
Since $x_0$ belongs to the set on the left it also belongs to each term $Z_i(H)^c + \gamma(X)_<$ appearing on the right, which we may write as
\[
Z_i(H)^c + \gamma_<(X) = \left(\bigcup W^c - a+ \gamma(X)_<\right) \cup \left(\bigcup W - b +\gamma(X)_<\right).
\]
So $x_0$ belongs to one of the terms in the union above, which we may take as either of the form $W^c - a + \gamma(X)_<$ or, since $x_0$ is non-singular, $\iW - b + \gamma_<(X)$. Making such a choice for each $i$ and taking the intersection results in a subset containing $x_0$ and contained in $X + \gamma(H)_<$.

So in either case, for each $X$ in the intersection of Equation \ref{eq: cut half-spaces} we may replace each $X+\gamma(X)_<$ with a smaller set containing $x_0$ and given as an intersection of $\Gamma_<$ translates of $W$ or $W^c$. Each such translate was the sum of an element of norm at most $c$ when lifted then projected to $\phy$, and an element $\gamma_<(X)$ with $\gamma(X) \in \cS(r)$, so has norm at most $r$ when projected to $\phy$. So the shifts of $W$ and $W^c$ in the intersection are of projections of lattice elements which project in $\phy$ to elements of norm at most $r+c$. Moreover, we may assume that their projections to $\intl$ belong to $W-W$, since otherwise they may be omitted without effecting the intersection with $W$. So these translation vectors belong to $\cS_<(r)$ and so their intersection is contained in an acceptance domain, which in turn is contained in $C$, as required.
\end{proof}

\begin{proof}[Proof of (4)]
We must show that every $\cB_<(r)$ translate of a hyperplane contributing to a cut region is an $\cS_<(\kappa r + c')$ translate of a hyperplane, for some fixed $c',\kappa > 0$. So let $H \in \sH$ and $\gamma \in \cB(r)$, with $H + \gamma_<$ intersecting the window non-trivially.

By Lemma \ref{lem: qc => lattice}, $\Gamma_<^H$ spans $V(H)$. So take a basis for $V(H)$ of elements from $\Gamma_<^H$ with fundamental domain of diameter less than $c_1$ in $\intl$ (which, with $c_1$ large enough, we may take as such a bound for some basis for all choices of $H$). Hence we may find a $\Z$-linear sum $s \in \Gamma^H$ of our chosen basis elements for which $\|\gamma_< - s_<\| \leq c_1$, so $\|s_<\| \leq r + c_1$. Now $H+(\gamma_<-s_<) = H+\gamma_<$; we would like to replace $\gamma$ with $\gamma - s$ in defining the cut $H + \gamma_<$.

In $\intl$, we have seen that $\|\gamma_< - s_<\| \leq c_1$. In $\phy$, it is not hard to see that $\|\gamma_\vee - s_\vee\| \leq \kappa r + c_2$ for some constants $c_2,\kappa > 0$. Indeed, $s_<$ was chosen relative to some basis $(\gamma_1)_<$, \ldots, $(\gamma_{k-d-1})_<$ of $V(H)$ of elements in $\Gamma_<^H$. Writing $s_< = \sum k_i \cdot (\gamma_i)_<$, since $\|s_<\| \leq r+c_1$, we thus have that $\sum |k_i| \leq \kappa_1(r+c_1)$, with $\kappa_1$ depending on the lengths of the basis vectors $(\gamma_i)_<$. So $s = \sum k_i \cdot \gamma_i$ with $\|s\| \leq \kappa_2(\sum |k_i|) \leq \kappa_1\kappa_2(r+c_1)$, with $\kappa_2$ depending on the lengths of the $\gamma_i \in \Gamma$. By assumption $\gamma \in \cB(r)$, so $\|\gamma_\vee\| \leq r$ and hence
\[
\|(\gamma - s)_\vee\| = \|\gamma_\vee - s_\vee\| \leq \kappa_3\|\gamma - s\| \leq \kappa_3\|\gamma\| + \kappa_3\|s\| \leq \kappa_3 r + \kappa_1\kappa_2\kappa_3(r+c_1),
\]
with $\kappa_3$ depending only on the projection $\pi_\vee$. So replacing $\gamma$ with $\gamma - s$, we may assume that $\|\gamma_<\| \leq c_1$ and that $\|\gamma_\vee\| \leq \kappa r + c_2$, with $\kappa \coloneqq \kappa_3(1+\kappa_1\kappa_2)$ and $c_2 \coloneqq \kappa_1\kappa_2\kappa_3c_1$.

For each hyperplane $H \in \sH$, let $X_H \subseteq H$ be a bounded region containing all points of $H$ within radius $c_1$ of $W$. By Lemma \ref{lem: qc => face covering} one may find a finite number of translates $W + (s_i)_<$, for $s_i \in \Gamma$, whose faces parallel to $H$ cover $X_H$ so that either $H+(s_i)_< = H$ or $H^\op + (s_i)_< = H$, where $H^\op \in \sH$, if it exists, is parallel to $H$ (that is, with $V(H) = V(H^\op)$).

We have that $H + \gamma_<$ intersects $W$, and since $\|\gamma_<\|\leq c_1$, we have that $X_H + \gamma_<$ intersects $W$ by the choice of $X_H$. Hence there is some translate $W + (s_i)_< + \gamma_<$ intersecting $W$, which implies that $(s_i)_< + \gamma_< \in W-W$, and with $H + \gamma_< = H+((s_i)_<+\gamma_<)$ or $H + \gamma_< = H^\op + ((s_i)_<+\gamma_<)$. In either case, we may replace the cut defined by $H + \gamma_<$ by a cut defined by $H' + (s_i+\gamma)_<$ with $H' \in \sH$. As established, $(s_i + \gamma)_< \in W-W$, and since the $s_i$ were chosen from a finite collection we have that $\|(s_i)_\vee + \gamma_\vee\| \leq c_3 + \|\gamma_\vee\| \leq c_3 + (\kappa r + c_2)$, where we choose $c_3$ so that each $\|(s_i)_\vee\| \in B_{c_3}$. Setting $c \coloneqq c_2 + c_3$, we thus have that $(s_i+\gamma)_< \in \cS_<(\kappa r + c)$, and so the result follows. \end{proof}

\begin{remark} We comment on the size of the constant $\kappa$ in Proposition \ref{prop: acc <-> cuts}. 

\vspace{0.2cm}
\noindent (a) In the codimension 1 case the window is an interval and both the acceptance domains $\cA(r)$ and cut regions $\cC'(r)$ are precisely the intervals in $W$ between the $\cS(r)_<$ translates of the two endpoints of $W$. If $z \in \cC(r)$ is such that $z+h \in W$ for one of the endpoints $h$ (so is actually cutting the window) then $z \in W-h \subset W-W$ and thus $z \in \cC(r)$ anyway. Hence, in codimension 1, $\cA(r) = \cC'(r)$, and $\cC'(r) = \cC(r)$ for sufficiently large $r$. In particular we may take $\kappa = 1$ in Proposition \ref{prop: acc <-> cuts}.

\vspace{0.2cm}
\noindent (b) In codimension larger than 1, we necessarily have that $\kappa > 1$. We give here a brief geometric indication of why.
Distinct cuts made by a hyperplane $H$ are identified with the cosets of $\Gamma^H$ in $\Gamma$. In particular, a given cut $H+z$ contributes to $\cC'(r)$ when the slab $\cS(r)$ intersects the coset $z+\Gamma^H$, and similarly the cut contributes to $\cC(r)$ when the box $\cB(r)$ intersects this coset. If it were the case that $\Gamma^H \subseteq \intl$, then these cuts  would be made simultaneously (at least after $r$ is taken sufficiently large so that $W-W \subseteq B_<(r)$, where we define $B_<(r)$ and $B_\vee(r)$ to be the balls of radius $r$ in $\intl$ and $\phy$, respectively). However, we have chosen $\intl$ to not contain lattice points, that is with $\pi_<$ injective on $\Gamma$. So the cosets of $\Gamma^H$ must be `slanted' in a certain sense with respect to $\intl$. Therefore, the box $B_\vee(r) + B_<(r)$ will intersect these cosets --- visualised as cutting through near the edge or corner of a box --- at faster constant rate to the slab $B_\vee(r) + (W-W)$, where $r$ is given by the distance from the origin at which the coset passes through a small strip containing the physical space.

The proof of (4) works by replacing a representative of a coset with another, whose $\intl$ coordinate is in $(W-W)$. This exchanges distance in the internal space with that of the physical space. By rescaling the metric in the internal space direction, the cut and project set and its geometry remain the same, and one may make $\kappa$ as close to $1$ as desired, but never equal to $1$ unless working in a setting which allows $\pi_<$ to be non-injective on $\Gamma$.
\end{remark}

\subsection{Troublesome scheming} \label{sec: troublesome scheming}

The result of Corollary \ref{cor: acceptance <-> cuts}, although stated there differently, is the tool implicitly utilised in \cite{Jul10} when analysing the complexity function of a cut and project set. Unfortunately there is an error in the proof of \cite[Proposition 3.1]{Jul10} (discussed in more detail in \cite[p.\ 73--74]{HaynKoivSaduWalt2015}). We demonstrate below in Example \ref{ex: triangle} that the almost canonical condition is not strong enough for this corollary to hold. 

\begin{definition} A polytopal cut and project scheme is {\bf almost canonical} if for each $H \in \sH$ with associated face $F \subseteq \partial W$, we have that $F + \Gamma_< \supseteq H$. \end{definition}

This first example shows that the quasicanonical condition does not imply the almost canonical condition. 

\begin{example} Let $k=4$, $d=2$ and $\intl = \{0\}^2 \times \R^2$, which we identify with $\R^2$ in the obvious way. Let $W = [0,1+\alpha] \times [0,1+\beta]$ and $f_1 = (1,1+\beta)$, $f_2 = (1,-1-\beta)$, $f_3 = (1+\alpha,1)$, $f_4 = (-1-\alpha,1)$, where $0 < \alpha, \beta < 1$. For suitable $\alpha$ and $\beta$, we may choose $\Gamma$ so that the projected lattice given by the $\Z$-span $\Gamma_< = \langle f_1, f_2, f_3, f_4 \rangle_\Z$, is rank $4$ and dense in $\intl$. The quasicanonical condition holds, and corresponds to the middle diagram of Figure \ref{fig:quasicanonical}. But we may only cover each $H\in V(\sH)$ with $\Gamma_<$ translates of $\partial W$ in an alternating fashion and not using the same face, so the almost canonical condition fails. \end{example}

The point here is that different faces of the window need to be used to cover $H$. If one modifies the almost canonical condition to allow a potentially opposite face to also be used in covering $H$, then any quasicanonical scheme must satisfy this modified condition by Lemma \ref{lem: qc => face covering}.

In the next example we illustrate that the quasicanonical condition is also not weaker than the almost canonical condition and, furthermore, that the almost canonical condition is not enough for the statement of Corollary \ref{cor: acceptance <-> cuts} to hold. 

\begin{example} \label{ex: triangle} Consider the case $k=3, d=1$ and let $\intl = \{0\}^1 \times \R^2$. Choose two linearly independent points $x$, $y \in \intl$. Let $W$ be the triangle with vertices $0$, $x$ and $y$. Then $V(\sH) = \{\langle x \rangle_\R, \langle y \rangle_\R, \langle y-x \rangle_\R\}$. We may arrange that the scheme is such that $\Gamma_< = \langle x,y,z \rangle_\Z$ for some $z \in \intl$ chosen `irrationally' i.e., with $\rk(\Gamma_<) = 3$, and with $\Gamma_<$ dense in $\intl$. We may assume that $z\in W$, see Figure \ref{fig: triangle}. 

\begin{figure}
	\def\svgwidth{0.4\textwidth}
	\centering
	\import{images/}{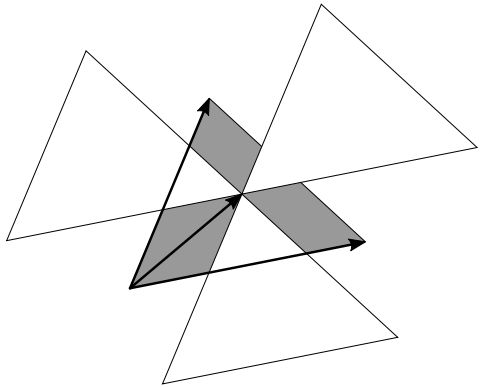}
	\caption{The window $W$ of Example \ref{ex: triangle} with three translates, by $z, z-x$ and $z-y$.}\label{fig: triangle}
\end{figure}

Essentially by construction this scheme is almost canonical: for the hyperplane $H = \langle \gamma \rangle_\R$ one can use the translates in $\Z \cdot\gamma$ of the face $F \subseteq H$ to cover $H$. But it is not quasicanonical. The issue is the same as in the rightmost picture in Figure \ref{fig:quasicanonical}. 

In more detail, let $c>0$ be such that $z \in \Gamma_<(c)$. We claim that $\sA(R) \preceq \sC(c)$ does not hold for \emph{any} $R > 0$. There are three $\Gamma_<$ translates of $W$, by $z$, $z-x$ and $z-y$, as in Figure \ref{fig: triangle} with a vertex at $z$. Every other translate either does not contain $z$ or contains it in its interior. Indeed, we may write $\gamma_< = n_1 x + n_2 y + n_3 z$ for $\gamma \in \Gamma$. It is easily checked that if $n_3 = 1$ then either $z \notin W + \gamma_<$ or $\gamma = z$, $z-x$ or $z-y$. For $n_3 \neq 1$, the translates of the hyperplanes defining the boundary of $W$ do not contain $z$ by the irrationality of $z$ relative to $(x, y)$. In summary, the translated window either contains $z$ in its interior or exterior, or is one of the three white triangles as in Figure \ref{fig: triangle}.

Consider three vectors $\alpha_i$ so that the vectors $z+\alpha_i$ belong to the three different shaded regions in Figure \ref{fig: triangle}. Then for $0 < \epsilon < 1$, the three points $z + \epsilon \alpha_i$ are separated by a hyperplane of $\sH + z$, so these three points belong to the different cut regions of $\sC(c)$. On the other hand, for any $R>0$, for sufficiently small $\epsilon$ these three points belong to the same acceptance domain of $\sA(R)$. Indeed, none of them belong to the three white triangles of Figure \ref{fig: triangle}, and every other translate of $W$, as has been established, contains $z$ in its interior or exterior, so we may choose $\epsilon$ small enough so the $\alpha_i$ are close enough to $z$ so that each of the finite number of translates defining $\sA(R)$ contain all three points or none of them.

We see that in this example, despite the scheme being almost canonical, we may not replace the analysis of acceptance domains with that of cut regions.
\end{example}

\section{Complexity functions for quasicanonical cut and project sets} \label{sec: complexity}

In this section we calculate the asymptotic growth rate of the complexity function in the quasicanonical case, adapting ideas from \cite{Jul10}. We shall refine the argument in Section \ref{sec: removing the quasicanonical condition} to improve the result by removing the quasicanonical requirement. By Corollary \ref{cor: complexity versus cuts}, in the quasicanonical case the complexity function can be analysed by counting cut regions. To this end we need to consider the $\cB_<(r)$ translates of the hyperplanes of $\sH$ intersecting the window. There are of order $r^k$ lattice points in $\cB(r)$ and so, without further restrictions, one would generically expect that there are of order $r^{k-1}$ translates of $H \in \sH$ intersecting $W$. However, in higher codimensions the low complexity cut and project sets are not generic: they are schemes where different $\Gamma_<$ translates of the hyperplanes can coincide, reducing the number of cuts and so also the complexity.

We shall count cut regions by counting their vertices, which may be estimated by considering `flags' of the linear subspaces defining the boundary of $W$.

\begin{definition} \label{def: flag} Let $X$ be an $n$-dimensional vector space, and $f = \{V_1,\ldots,V_n\}$ be a set of codimension one (i.e., $(n-1)$-dimensional) linear subspaces of $X$. If $\bigcap V_i = \{0\}$ then we call $f$ a {\bf flag}. \end{definition}

We emphasise that there should be precisely $n$ subspaces constituting a flag in an $n$-dimensional space and no more. For ease of reference, we state two basic facts related to flags, whose proofs are a matter of basic linear algebra and are omitted:

\begin{lemma} \label{lem: simple flags} Let $V$ be a codimension $1$ subspace of the $n$-dimensional vector space $X$, and $V'$ any subspace of $X$. Then either $V' \subseteq V$ or $\dim(V \cap V') = \dim(V')-1$. Hence, any collection $V_1,\ldots,V_N$ of codimension $1$ subspaces of $X$ with $\bigcap V_i = \{0\}$ contains a subset which is a flag. Moreover, we can choose such a flag to contain any given $V_i$. \end{lemma}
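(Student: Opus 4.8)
The plan is to treat the three assertions in turn, each resting only on the rank--nullity theorem together with a short induction.

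First I would prove the dichotomy for a single codimension-one subspace $V$ and an arbitrary subspace $V'$ of $X$. Choose a nonzero linear functional $\phi \colon X \to \R$ with $\ker \phi = V$, and restrict it to $V'$; its kernel there is precisely $V \cap V'$. If $\phi|_{V'} \equiv 0$ then $V' \subseteq V$; otherwise $\phi|_{V'}$ is surjective onto $\R$, so rank--nullity gives $\dim(V \cap V') = \dim(V') - 1$. This step is immediate.

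Next, for the existence of a flag, I would build one greedily. Set $W_0 \coloneqq X$, and suppose inductively that distinct subspaces $V_{i_1}, \dots, V_{i_j}$ have been selected from the collection with $W_j \coloneqq V_{i_1} \cap \cdots \cap V_{i_j}$ of dimension $n-j$ (the base case $j=0$ being the empty intersection $W_0 = X$). If $j < n$ then $W_j \neq \{0\}$, and since $\bigcap_{i=1}^N V_i = \{0\}$ there must be an index $i_{j+1}$ with $W_j \not\subseteq V_{i_{j+1}}$; this index is automatically new, since every previously chosen subspace contains $W_j$. Applying the first assertion with $V = V_{i_{j+1}}$ and $V' = W_j$ yields $\dim(W_j \cap V_{i_{j+1}}) = n-(j+1)$, so the induction continues. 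After $n$ steps $W_n$ has dimension $0$, and $\{V_{i_1}, \dots, V_{i_n}\}$ is a flag.

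For the final clause I would run the same construction but seed it with $V_{i_1} \coloneqq V_i$: since $V_i$ has codimension one, $W_1 = V_i$ already has dimension $n-1$, matching the inductive hypothesis, and the rest of the argument proceeds unchanged. I do not expect any genuine obstacle; the only thing to watch is the bookkeeping that the process terminates after exactly $n$ selections and that the chosen subspaces are pairwise distinct — both guaranteed by the strict drop in dimension at each step — and noting that the hypothesis $\bigcap_i V_i = \{0\}$ is exactly what keeps a fresh $V_{i_{j+1}}$ available at every stage until $W_j$ reaches $\{0\}$.
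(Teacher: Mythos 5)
Your proof is correct. The paper actually omits the proof of this lemma, labelling it ``a matter of basic linear algebra''; your argument — the functional/rank--nullity dichotomy followed by a greedy strictly-dimension-dropping induction that may be seeded at any $V_i$ — is the standard way to supply those omitted details, and the bookkeeping (distinctness of the chosen subspaces, termination after exactly $n$ steps, the role of $\bigcap_i V_i = \{0\}$ in guaranteeing a fresh choice) is handled cleanly.
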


\begin{lemma} \label{lem: singleton intersections} Let $V_1$, \ldots , $V_n$ be codimension one subspaces of $X$, where $\dim(X) = n$. If $f = \{V_1, \ldots, V_n\}$ is a flag then $\bigcap V_i + c_i$ is a singleton set for any $c_1$, \ldots, $c_n \in X$. Conversely, if $\bigcap V_i + c_i$ is a singleton set for some $(c_i)$, then $f$ is a flag. \end{lemma}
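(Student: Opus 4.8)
\textbf{Proof plan for Lemma \ref{lem: singleton intersections}.}

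The plan is to prove both directions via elementary linear algebra, using dimension counting and the defining property of a flag ($\bigcap V_i = \{0\}$ in the $n$-dimensional space $X$). For the forward direction, suppose $f = \{V_1, \ldots, V_n\}$ is a flag and let $c_1, \ldots, c_n \in X$ be arbitrary. I would first argue that $\bigcap_{i=1}^n (V_i + c_i)$ is nonempty. The key tool is Lemma \ref{lem: simple flags}: since the $V_i$ have codimension one and $\bigcap V_i = \{0\}$, I can build up the intersection one subspace at a time and track dimensions. Concretely, set $W_j \coloneqq \bigcap_{i=1}^j V_i$; then $\dim W_j = n - j$ by repeated application of the dichotomy in Lemma \ref{lem: simple flags} (none of the $V_i$ can contain $W_{i-1}$, else the total intersection would be too large to be $\{0\}$). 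Dually, for the affine intersection, I would proceed by induction on $j$, showing that $A_j \coloneqq \bigcap_{i=1}^j (V_i + c_i)$ is a nonempty affine subspace of dimension $n-j$: given a point $p \in A_{j-1}$, the affine subspace $A_{j-1}$ has direction $W_{j-1}$ which is not contained in $V_j$ (by the dimension count), so $A_{j-1}$ meets the affine hyperplane $V_j + c_j$ transversally, producing a nonempty intersection $A_j$ of one lower dimension. At $j = n$ this gives $\dim A_n = 0$, i.e., $A_n$ is a single point.

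For the converse, suppose $\bigcap_{i=1}^n (V_i + c_i)$ is a singleton for some choice of $c_1, \ldots, c_n$. Taking $c_i = 0$ for all $i$ would not directly apply since the hypothesis is only for the \emph{specific} $(c_i)$, so instead I would argue as follows: if $x$ is the unique point of $\bigcap (V_i + c_i)$, then $\bigcap (V_i + c_i) - x = \bigcap (V_i + (c_i - x))$ is the singleton $\{0\}$, and $0 \in V_i + (c_i - x)$ forces $c_i - x \in V_i$ (as $V_i$ is a subspace), hence $V_i + (c_i - x) = V_i$. Therefore $\bigcap_{i=1}^n V_i = \{0\}$, which is precisely the flag condition. (One should note that $f$ consists of exactly $n$ subspaces by hypothesis, so nothing further is needed for it to be a flag in the sense of Definition \ref{def: flag}.)

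The only mildly delicate point is the transversality step in the forward direction: one must be sure that the direction space $W_{j-1}$ of the affine subspace $A_{j-1}$ is genuinely not contained in $V_j$. This is where the full flag hypothesis $\bigcap_{i=1}^n V_i = \{0\}$ is used rather than just pairwise conditions — if some $V_j \supseteq W_{j-1} = \bigcap_{i<j} V_i$, then adding $V_j$ would not cut the dimension down, and continuing with the remaining subspaces could not reach $\{0\}$ from a space of dimension $> 0$ using only codimension-one cuts; tracking this carefully via Lemma \ref{lem: simple flags} is the crux. Everything else is routine bookkeeping with affine subspaces, and I would keep the write-up brief, citing Lemma \ref{lem: simple flags} for the subspace dimension counts and spelling out only the affine induction.
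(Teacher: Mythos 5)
Your proof is correct. Note that the paper itself does not supply a proof of Lemma \ref{lem: singleton intersections}: it is stated alongside Lemma \ref{lem: simple flags} as a ``basic fact'' whose ``proofs are a matter of basic linear algebra and are omitted,'' so there is no argument in the paper to compare against. Your write-up is a reasonable fully worked-out version of exactly that routine linear algebra: the forward direction by inductive transversality (at each step the direction space $W_{j-1}$ of the affine intersection $A_{j-1}$ cannot lie in $V_j$, since otherwise the dimensions $\dim W_j$ could not reach $0$ in $n$ codimension-one cuts), and the converse by translating the singleton to the origin to reduce to the linear statement. One small point worth spelling out in the converse: the conclusion $\bigcap V_i = \{0\}$ also forces the $V_i$ to be pairwise distinct (intersecting only $n-1$ codimension-one subspaces can drop the dimension to at most $1$), which is what justifies your parenthetical claim that $f$ really does have $n$ elements and hence is a flag in the sense of Definition \ref{def: flag}.
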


The lemma above allows us to also say that a collection $f = \{H_1,\ldots,H_n\}$ of affine hyperplanes of some $X \cong \R^n$ is a {\bf flag} if their intersection is a single point, which is if and only if the collection $V(f)$ is a flag as defined previously in Definition \ref{def: flag}. 

Let us briefly allow ourselves to consider non-compact versions of polytopes (whose interiors appear as certain cut regions later):

\begin{definition} An intersection of a finite number of half-spaces of $\R^d$ with non-empty interior is called a {\bf convex polyhedron}. If it is compact then it is called a {\bf convex polytope}. \end{definition}

We drop the adjective \emph{convex} from the above definitions, since all polyhedra/polytopes here will be convex. The finite collection of half-spaces defining a polyhedron will always be denoted by $\sH^+$, and the associated hyperplanes by $\sH$.

\begin{definition} A {\bf face} of an $n$-polyhedron $P$ is a subset $H \cap P$ where $H$ is a hyperplane for which $P$ is contained in one of the closed half-spaces of $H$. We call it an {\bf $n$-face} if $\dim (H \cap P) = n$. We call a $0$-face a {\bf vertex}, a $1$-face an {\bf edge} and an $(n-1)$-face a {\bf facet}. We will regularly identify a vertex $\{v\}$ with the point $v$ itself. \end{definition}

Here are some well-known properties of the face structure of polyhedra. A good reference for this is \cite{BG09}:

\begin{proposition}\label{prop: polyhedra} Let $P$ be an $n$-dimensional polyhedron with irredundant set $\sH^+$ of defining half-spaces (which means that omitting any $H^+ \in \sH^+$ results in an intersection of half-spaces different to $P$).
\begin{enumerate}
	\item The set $\sH^+$ is uniquely determined by $P$.
	\item Every $j$-face of $P$ is an intersection of $(n-j)$ hyperplanes of $\sH$, which intersect to a $j$-dimensional affine subspace, intersected with $P$. In particular, every facet is the intersection $H \cap P$ for $H \in \sH$.
	\item The boundary $\partial P$ is the union of facets of $P$.	
	\item The $(j-1)$-faces of $P$ are the facets of the $j$-faces.
	\item Let $F_0 \subset F_1 \subset \cdots \subset F_m = P$ be a strictly ascending maximal chain of non-empty faces of $P$. Then $\dim(F_{i+1}) = \dim(F_i) + 1$ for each $i$. Moreover, $\dim(F_0) = \dim(U)$ where $U$ is the intersection of linear subspaces $V(H)$ for $H \in \sH$. In particular, $P$ has vertices if and only if the subspaces $V(H)$ for $H \in \sH$ contain a flag.
\end{enumerate}
\end{proposition}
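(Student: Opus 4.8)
The plan is to deduce all five parts from one basic structural fact about faces of a polyhedron, combined with Lemmas~\ref{lem: simple flags} and~\ref{lem: singleton intersections}. The fact is: for a face $F$ of $P$, if $\sH_F \subseteq \sH$ denotes the set of defining hyperplanes containing a fixed relative-interior point of $F$ (this set does not depend on the chosen point), then $F = P \cap \bigcap_{H \in \sH_F} H$ and the affine hull of $F$ equals $\bigcap_{H \in \sH_F} H$, so that $\dim F = \dim \bigcap_{H \in \sH_F} V(H)$. I would establish this first, along with two routine remarks: the intersection of two faces of $P$ is a face of $P$, and a face of a face of $P$ is a face of $P$. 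Since all of this is classical (see \cite{BG09}), an alternative is to cite it and use Lemma~\ref{lem: simple flags} only to phrase the vertex criterion in (5) in terms of flags; but the direct arguments are short.

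For (1): call $H^+ \in \sH^+$ \emph{necessary} if deleting it strictly enlarges the intersection, and show $H^+$ is necessary exactly when $H \cap P$ is a facet, in which case $H$ is recovered as the affine hull of that facet. As facets are determined by $P$ alone, this fixes $\sH$, hence $\sH^+$, since $P$ lies in a unique closed half-space of each $H \in \sH$. For ``necessary $\Rightarrow$ facet'' take a point lying in every half-space but $H^+$, connect it to an interior point of $P$, and note the connecting segment meets $H$ in a point strictly inside all the remaining half-spaces, forcing $\dim(H \cap P) = n-1$. For the converse, the relative interior of a facet $G$ lies in $\partial P \subseteq \bigcup_{H \in \sH} H$, so a dimension (Baire) argument produces $H' \in \sH$ whose trace on $G$ has dimension $n-1$, whence $H' = \mathrm{aff}(G)$ and $(H')^+ \in \sH^+$.

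For (2)--(4): given a $j$-face $F$ with set $\sH_F$, the subspaces $V(H)$ with $H \in \sH_F$ are codimension-one in the total space and meet in the $j$-dimensional space $V(\mathrm{aff}(F))$; passing to the quotient by $V(\mathrm{aff}(F))$ makes their images codimension-one subspaces of an $(n-j)$-dimensional space with trivial intersection, so Lemma~\ref{lem: simple flags} extracts a flag of $n-j$ of them. Pulling back and using Lemma~\ref{lem: singleton intersections}, we get $H_1,\dots,H_{n-j} \in \sH_F$ with $\bigcap_i V(H_i) = V(\mathrm{aff}(F))$, hence $\bigcap_i H_i = \mathrm{aff}(F)$ and $\bigcap_i H_i \cap P = \mathrm{aff}(F)\cap P = F$; the facet statement is the case $j=n-1$. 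Part (3) is then immediate from $\partial P \subseteq \bigcup_{H \in \sH}(H \cap P)$, and part (4) follows by applying (2)--(3) to a $j$-face viewed as a polyhedron in its own affine hull and using transitivity of the face relation.

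For (5) and the main difficulty: in a maximal chain $F_0 \subset \cdots \subset F_m = P$ each $F_i$ is a maximal proper face of the polyhedron $F_{i+1}$, hence a facet of it by (3)--(4), so $\dim F_{i+1} = \dim F_i + 1$. For the minimal face $F_0$ I would first show $F_0 = \mathrm{aff}(F_0)$ is an affine subspace --- otherwise $F_0$, as a polyhedron in its own affine hull, has a facet, which is a proper face of $F_0$ and so (transitivity) a proper face of $P$ strictly inside $F_0$, contradicting minimality. An affine subspace contained in the closed half-space $H^+$ is parallel to $H$, so $V(F_0) \subseteq V(H)$ for every $H \in \sH$ and thus $V(F_0) \subseteq U$; since also $V(F_0) = \bigcap_{H \in \sH_{F_0}} V(H) \supseteq U$, we conclude $\dim F_0 = \dim U$. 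Finally $P$ has a vertex iff $\dim F_0 = 0$ iff $U = \{0\}$ iff, by Lemma~\ref{lem: simple flags}, the codimension-one subspaces $\{V(H)\}_{H \in \sH}$ contain a flag. The part I expect to be fiddliest is the unbounded case in (5): one must keep in mind that the minimal face $F_0$ is a translate of the lineality space and need not be a point, and that the correct invariant is $U = \bigcap_{H \in \sH} V(H)$ and not $\bigcap_{H \in \sH} H$ (which can be empty even when $P$ has no vertices, e.g.\ for a slab).
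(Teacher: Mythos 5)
The paper does not actually prove Proposition \ref{prop: polyhedra}: it presents the five statements as well-known facts about the face structure of polyhedra and refers the reader to \cite{BG09}. Your proposal therefore supplies arguments where the paper supplies none, and what you write is a correct, essentially standard derivation of the kind found in that reference: everything is reduced to the active-constraint description of a face ($F = P \cap \bigcap_{H \in \sH_F} H$ with $\mathrm{aff}(F) = \bigcap_{H \in \sH_F} H$), with the paper's Lemmas \ref{lem: simple flags} and \ref{lem: singleton intersections} used exactly where they are needed, namely to extract $n-j$ hyperplanes realising a $j$-face in (2) and to translate ``$U = \{0\}$'' into the flag condition in (5). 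Your handling of the genuinely delicate points is right: irredundancy of $H^+$ is equivalent to $H \cap P$ being a facet (segment argument one way, Baire/dimension argument the other), the minimal face of the chain in (5) is an affine translate of the lineality space $U = \bigcap_{H} V(H)$ rather than of $\bigcap_H H$, and maximal proper faces are facets because any proper face, having its relative interior meet some facet, is contained in that facet. The only place your sketch is thin is the forward direction of (4), that every $(j-1)$-face of $P$ is a facet of \emph{some} $j$-face: transitivity of the face relation gives the converse, but exhibiting a $j$-face sandwiched between a given $(j-1)$-face and $P$ requires dropping one hyperplane from a flag for $\sH_F$ in the quotient and checking the resulting face has dimension exactly $j$. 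This is routine and again in \cite{BG09}, but it is the one assertion your outline does not reduce to the ingredients you set up.
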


Recall the definition of the stabiliser subgroup $\Gamma^H \leqslant \Gamma$ from Definition \ref{def: stabiliser}. The main theorem of this section is the following. 

\begin{theorem}\label{thm: complexity} Suppose a quasicanonical $k$-to-$d$ cut and project scheme is given. Consider the collection $\sF$ of flags of $\sH$. Then $p(r) \asymp r^\alpha$ where
\[
\alpha = \max_{f \in \sF} \alpha_f, \ \text { for }  \ \alpha_f \coloneqq \sum_{H \in f} (k - \mathrm{rk}(H)-1).
\]
\end{theorem}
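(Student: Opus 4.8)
The plan is to use Corollary \ref{cor: complexity versus cuts} to reduce the problem to estimating $\#\sC(r)$, the number of cut regions obtained by removing $\cB_<(r)$-translates of the hyperplanes $H \in \sH$ from $W$, and then to count these regions up to a constant factor by counting their vertices. So it suffices to show $\#\sC(r) \asymp r^\alpha$. The key geometric input is Proposition \ref{prop: polyhedra}: each cut region is the interior of a polyhedron cut out by finitely many translated hyperplanes, and (by part (5), using Lemma \ref{lem: qc => lattice} which guarantees each $V(H)$ has full span in $V(H)$ so enough hyperplanes are present) a bounded-size region has comparably many vertices as it has full-dimensional cells. Each vertex is, by part (2) of Proposition \ref{prop: polyhedra}, an intersection of $d = k-(k-d)$... wait, here $\intl$ has dimension $k-d$, so a vertex of a cut region in $\intl$ is an intersection of $k-d$ of the translated hyperplanes meeting to a point; by Lemma \ref{lem: singleton intersections} the corresponding set of hyperplanes forms a flag of $\sH$ (after translating to the origin). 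Thus I would count vertices by first fixing a flag $f = \{H_1,\ldots,H_{k-d}\}$ of $\sH$, counting the number of vertices of the form $\bigcap_i (H_i + v_i)$ with $v_i \in \cB_<(r)$ lying in $W$, and then summing over the finitely many flags; the $\asymp$ is not affected by the finite sum, so $\alpha = \max_{f} \alpha_f$ where $\alpha_f$ is the growth exponent of the number of such vertices coming from $f$.

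The heart of the argument is then the per-flag count. For a single hyperplane $H$, the distinct translates $H + v_<$ with $v \in \cB_<(r)$ are parametrised by cosets of $\Gamma^H_< = \Gamma_< \cap V(H)$ in $\Gamma_<$; since $v \in \cB(r)$ forces $\|v_\vee\| \le r$ and there are $\asymp r^k$ lattice points in $\cB(r)$ but the stabiliser $\Gamma^H$ has rank $\mathrm{rk}(H)$ and acts without changing the translate, one expects $\asymp r^{k - \mathrm{rk}(H)}$ distinct translates of $H$; intersecting with the bounded window $W$ picks out $\asymp r^{k-\mathrm{rk}(H)-1}$ of them (the window is $(k-d)$-dimensional and a hyperplane family sweeping through it in one transverse direction). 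I would make this precise by choosing, via Lemma \ref{lem: qc => lattice}, a basis of $\Gamma^H_<$ inside $V(H)$ and completing it appropriately; a translate $H + v_<$ is then determined by the image of $v$ under projection modulo $V(H) + \phy$, i.e.\ by an element of $\Gamma / \Gamma^H$, and the constraint $\|v_\vee\| \le r$ together with $H + v_<$ meeting the bounded set $W$ restricts this image to a region of the appropriate $(k-\mathrm{rk}(H)-1)$-dimensional box, giving the count up to constants. For a flag $f$, the translated hyperplanes $H_1 + v_1, \ldots, H_{k-d} + v_{k-d}$ intersect in a single point by Lemma \ref{lem: singleton intersections}, and distinct choices of the $(k-d)$ translates give distinct vertices; so the number of flag-$f$ vertices in $W$ is (up to constants) the product $\prod_{H \in f} r^{k - \mathrm{rk}(H) - 1} = r^{\alpha_f}$, and an independence/genericity argument is needed to see that a positive proportion of these tuples of translates do actually meet inside $W$ (this uses that $W$ has nonempty interior and that the hyperplane directions, forming a flag, span $\intl$, so the vertex location varies surjectively and with bounded distortion as the translation parameters vary).

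Finally I would assemble the two-sided bound. For the upper bound $\#\sC(r) \ll r^\alpha$: every cut region, being a nonempty-interior polyhedron, has at least one vertex provided vertices exist, which they do since each $V(H)$ and hence the whole collection spans $\intl$ (so Proposition \ref{prop: polyhedra}(5) applies); a region is determined by the incidence pattern of its facets, and a standard counting argument bounds the number of full-dimensional cells in an arrangement of $N$ hyperplanes in $\R^{k-d}$ by $O(N^{k-d})$, but more carefully, counting (vertex, region) incidences and using that each region has boundedly many vertices near it in our locally finite situation, $\#\sC(r) \ll \#\{\text{vertices in } W\} \ll \sum_{f \in \sF} r^{\alpha_f} \ll r^\alpha$. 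For the lower bound $\#\sC(r) \gg r^\alpha$: pick a flag $f$ achieving the maximum $\alpha_f = \alpha$; by the per-flag count there are $\gg r^\alpha$ vertices in the interior of $W$ arising from $f$, and since the cut arrangement is locally finite (bounded local complexity of the hyperplane family intersected with $W$, again using the stabiliser structure), each such vertex is incident to only boundedly many cut regions, so the number of regions is $\gg r^\alpha$. Combining, $p(\kappa r + c) \asymp \#\sC(r) \asymp r^\alpha$, and replacing $r$ by a linear rescaling leaves $\asymp r^\alpha$ unchanged, giving $p(r) \asymp r^\alpha$.

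I expect the main obstacle to be the lower bound's genericity step: showing that a positive proportion of tuples of admissible translates of the hyperplanes in a maximising flag $f$ actually intersect inside the \emph{interior} of $W$ (rather than just in the affine subspace they determine), and that distinct such tuples give distinct cut regions rather than collapsing together. This requires quantifying how the vertex $\bigcap (H_i + v_i)$ moves as a bi-Lipschitz-controlled function of the transverse coordinates of the $v_i$, uniformly over the finitely many flags, and then a pigeonhole over a grid of vertex locations inside a fixed interior ball of $W$. The upper bound's reduction from counting regions to counting vertices also needs the local finiteness carefully: one must rule out that many regions share few vertices, which is where the uniform bound $\kappa$ on overlap multiplicity from Lemma \ref{lem: qc => lattice} and the stabiliser ranks (each translated-hyperplane family having bounded local density in $W$) does the work.
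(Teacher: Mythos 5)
Your proposal is correct and follows essentially the same route as the paper: reduce to counting cut regions via Corollary \ref{cor: complexity versus cuts}, count regions by counting vertices of the hyperplane arrangement (the paper's Lemma \ref{lem: count components}), establish the per-hyperplane count $\asymp r^{k-\rk(H)-1}$ via cosets of $\Gamma^H$ (Lemma \ref{lem: hyperplane hitting ball}), and take the product over a flag, with the max for the lower bound and the sum over flags for the upper bound. The ``genericity'' obstacle you flag for the lower bound is resolved in the paper exactly in the spirit you suggest: one counts only translates meeting a sufficiently small ball $B$ deep inside $W$ (still $\gg r^{k-\rk(H)-1}$ of them), so that every flag intersection of such translates automatically lands near $B$ and hence in $W$.
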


\subsection{Proof of Theorem \ref{thm: complexity}}

Lemma \ref{lem: count components} below estimates the number of cut regions obtained by removing translates of codimension $1$ hyperplanes from $\intl$. This lemma is similar to \cite[Lemma 2.7]{Jul10}, but we prove it using a different method, by counting vertices of connected components.

\begin{lemma} \label{lem: count components} Consider a collection $V_1$, $V_2$,\ldots, $V_N$ of codimension $1$ linear subspaces of $\intl$, with $\dim(\intl) = n$, and suppose that $\bigcap V_i = \{0\}$. Let $\sF$ be the set of flags of the $V_i$. Take a collection $H_i^1$, $H_i^2$,\ldots, $H_i^{h(i)}$ of translates of each $V_i$ and consider the set $\sC$ of connected components of the complement of their union in $\intl$. Then there are constants $C_1$, $C_2$, determined by only the collection $\{V_i\}$, for which
\[
\left(C_1 \cdot \max_{f \in \sF} \prod_{j=1}^n h(f_j)\right) \leq \# \sC \leq \left(C_2 \cdot \sum_{f \in \sF} \prod_{j=1}^n h(f_j)\right)
\]
where each $f_j \in \{1,\ldots,n\}$ is the index of the $j$th element $V_{f_j} \in f$ of the flag. \end{lemma}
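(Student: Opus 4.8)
The plan is to count connected components of the complement of a hyperplane arrangement by counting its vertices, since in this setting each component of the complement is the interior of a polyhedron whose vertex set determines it up to a bounded factor. First I would verify the upper bound. Each vertex of a component is the intersection of $n$ of the affine hyperplanes $H_i^j$, and by Lemma \ref{lem: singleton intersections} such an intersection is a point only when the corresponding subspaces $V_{i_1},\ldots,V_{i_n}$ form a flag $f \in \sF$. For a fixed flag $f$, the number of ways of choosing one translate of each $V_{f_j}$ is exactly $\prod_{j=1}^n h(f_j)$, so the total number of vertices is at most $\sum_{f \in \sF} \prod_{j=1}^n h(f_j)$. It then remains to bound the number of components in terms of the number of vertices: since $\bigcap V_i = \{0\}$, every nonempty face chain in any component-polyhedron descends to a vertex (Proposition \ref{prop: polyhedra}(5)), and a standard argument — each bounded component has at least one vertex, and unbounded components can be charged to vertices or are bounded in number by a constant depending only on $\{V_i\}$ (the number of "directions at infinity", i.e.\ components of the complement of $\bigcup V_i$ in $\intl$) — gives $\#\sC \leq C_2 \cdot (\#\text{vertices}) + (\text{const})$, which is absorbed into the stated form after enlarging $C_2$.

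For the lower bound I would exhibit many distinct components by localising near a single flag. Fix a flag $f = \{V_{f_1},\ldots,V_{f_n}\}$ achieving the maximum of $\prod_{j=1}^n h(f_j)$. The $n$ families of parallel affine hyperplanes, one family of size $h(f_j)$ parallel to each $V_{f_j}$, already form a "grid" arrangement: choosing for each $j$ a gap between consecutive translates of $V_{f_j}$ (or an outer region) cuts out a nonempty open cell, and there are at least $\prod_{j=1}^n (h(f_j)-1)$ such cells, hence $\gg \prod_{j=1}^n h(f_j)$ of them once we note $h(f_j) \geq 1$ (and if some $h(f_j)$ is small the corresponding factor is just absorbed into $C_1$). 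The remaining hyperplanes $H_i^\ell$ not in these $n$ families only subdivide cells further, never merging them, so the total count $\#\sC$ is at least this product. Taking $C_1$ small enough (depending only on $\{V_i\}$, e.g.\ $2^{-n}$) handles the $(h(f_j)-1)$ versus $h(f_j)$ discrepancy uniformly.

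The main obstacle I expect is the careful bookkeeping of \emph{unbounded} components in the upper bound: a priori an unbounded polyhedral cell may have no vertices at all, so one cannot simply say "number of components $\leq$ number of vertices". The fix is to observe that the recession cones of the cells are governed only by the subspaces $\{V_i\}$ and not by the number of translates, so the unbounded cells fall into finitely many "shapes at infinity" and each such shape contributes a number of cells controlled, via its bounded cross-section, by the same vertex count (or, when $\{V_i\}$ spans so that $U = \{0\}$, every cell is a polytope with at least one vertex and the issue disappears entirely). Formalising this — essentially intersecting with a large ball or quotienting by the lineality space — is the one genuinely non-trivial point; everything else is the flag combinatorics already set up in Lemmas \ref{lem: simple flags} and \ref{lem: singleton intersections} and Proposition \ref{prop: polyhedra}.
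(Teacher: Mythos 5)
Your overall strategy — count vertices of the arrangement and compare to the number of cells — is the same as the paper's, and the flag combinatorics (each flag with translates contributes a vertex via Lemma \ref{lem: singleton intersections}; each vertex refines to a flag via Lemma \ref{lem: simple flags}) is exactly right. The upper bound is therefore fine in spirit. But two points need cleaning up.

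First, the long digression about unbounded cells is unnecessary and, as written, partly wrong. The hypothesis $\bigcap_{i=1}^N V_i = \{0\}$, together with the fact that at least one translate of each $V_i$ is present, already forces \emph{every} cell $C$ (bounded or not) to have a vertex: if $v\neq 0$ were in the lineality space of $\overline{C}$, pick $i$ with $v\notin V_i$; then the line $\{x+tv\}$ through any $x\in C$ lies entirely in $C$ but must cross the translate $H_i^1$, contradicting $C\cap H_i^1=\emptyset$. So the lineality space is trivial and Proposition \ref{prop: polyhedra}(5) gives a vertex. This is what the paper invokes, and it dissolves the issue entirely. Your alternative route — classifying cells by ``direction at infinity'' and claiming the unbounded cells are ``bounded in number by a constant depending only on $\{V_i\}$'' — is actually false (take two directions in the plane with many translates of each: the number of unbounded cells grows), and the parenthetical ``every cell is a polytope'' conflates \emph{has a vertex} with \emph{is bounded}. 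You should drop that branch of the argument and just prove all cells have a vertex.

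Second, the lower bound has an arithmetic slip. An $n$-dimensional grid arrangement with $h(f_j)$ parallel hyperplanes in the $j$-th direction has $\prod_j\bigl(h(f_j)+1\bigr)$ cells, not $\prod_j\bigl(h(f_j)-1\bigr)$; your own description (``a gap between consecutive translates \emph{or an outer region}'') is consistent with the $+1$ count. As written, the $-1$ version is zero whenever some $h(f_j)=1$, and no fixed constant $C_1$ (certainly not $2^{-n}$) rescues $\prod(h_j-1)\geq C_1\prod h_j$ in that case, because the left side vanishes. With the correct $+1$ count the bound is immediate with $C_1=1$, since adding the remaining hyperplanes only refines cells. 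That lower-bound argument (count grid cells directly, rather than passing through vertex counting as the paper does) is a genuine and slightly cleaner alternative once corrected: you avoid needing the bound ``each cell has at most $C$ vertices,'' and you get the sharp constant $C_1 = 1$, whereas the paper derives both directions uniformly from the two-sided comparison $\#\sC \asymp \#\mathcal{V}$.
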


\begin{proof} The closure $C$ of each connected component of $\sC$ is an intersection of half-spaces, bounded by the various translates $H_i^j$. Denote the set of all vertices of connected components by $\mathcal{V}$.

Since $\bigcap V_i = \{0\}$, by Proposition \ref{prop: polyhedra} each $C \in \sC$ has at least one vertex, given as an intersection of some of the hyperplanes defining $C$. At each vertex $v \in \mathcal{V}$ there can be at most a bounded number of $C \in \sC$ containing $v$, since at most $N$ hyperplanes $H_i^j$ can pass through $v$. Since each element of $\sC$ is incident with at least one vertex of $\mathcal{V}$, there is thus some constant $C_1$ for which
\[
\# \sC \geq C_1 \cdot \# \mathcal{V}.
\]
On the other hand, each $C \in \sC$ can have at most a bounded number of vertices (since we have only finitely many $V_i$, and every vertex is the intersection of $n$ of the defining hyperplanes of $C$). Hence, there is some constant $C_2$ for which
\[
\# \sC \leq C_2 \cdot \# \mathcal{V}.
\]
So we are done if we can bound the number of vertices.

By Lemma \ref{lem: simple flags} the set $\sF$ of flags is non-empty. Given any flag $f = \{V_{f_1},\ldots,V_{f_n}\} \in \sF$, by Lemma \ref{lem: singleton intersections} any corresponding set of translates $\{H_{f_1}^{j_1},\ldots H_{f_n}^{j_n}\}$ intersects at a single point, that is, at a vertex $\mathcal{V}$. Moreover, different translates produce different vertices. Indeed, take two different sets of translates which employ different translates of $V_{f_i}$, say with $H_{f_i}^j \neq H_{f_i}^{j'}$. These hyperplanes are parallel, so $H_i^j \cap H_i^{j'} = \emptyset$ and hence the intersections of each collection result in different vertices. So there are precisely $\prod_{j=1}^n h(f_j)$ vertices of $\mathcal{V}$ coming from the flag $f$, hence
\[
\# \mathcal{V} \geq \max_{f \in \sF} \prod_{j=1}^n h(f_j).
\]
We may only claim a maximum here, since some vertices may belong to intersections of translates of different flags.

On the other hand, suppose that $v \in \mathcal{V}$, so that $\{v\} = H_{i_1}^{j_1} \cap H_{i_2}^{j_2} \cap \cdots \cap H_{i_m}^{j_m}$. By Lemmas \ref{lem: simple flags} and \ref{lem: singleton intersections} this may be refined to an intersection of a flag of the $H_i^j$. It follows that
\[
\# \mathcal{V} \leq \sum_{f \in \sF} \prod_{j=1}^N h(f_j).
\]
The result now follows from our above comparisons of $\# \sC$ with $\# \mathcal{V}$. 
\end{proof}

The above lemma allow us to calculate the number of connected components of cut regions from the number of relevant cuts. The lemma below establishes how many relevant cuts there are in a quasicanonical cut and project scheme. The proof follows from the proof of Lemmas \ref{lem: face-cuts} and \ref{lem: nice cuts} and will be postponed until Section \ref{sec: removing the quasicanonical condition}. 

\begin{lemma} \label{lem: hyperplane hitting ball}  Let $H \in \sH$ and $B \subseteq \intl$ be a bounded subset of the internal space with non-empty interior. Let $N(H,B,r)$ denote the number of translates under $\cB_<(r)$ of the hyperplane $H$ which intersect $B$. Then
\[
N(H,B,r) \asymp r^{k - \mathrm{rk}(H) - 1}.
\]
\end{lemma}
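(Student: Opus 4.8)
The plan is to estimate $N(H,B,r)$ by translating the problem into counting cosets of the stabiliser subgroup $\Gamma^H$ inside $\Gamma$, and then estimating how many such cosets have a representative with small $\pi_\vee$-norm. First I would observe that two lattice points $\gamma,\gamma' \in \Gamma$ translate $H$ to the same affine hyperplane if and only if $\gamma_< - \gamma'_< \in V(H)$, which (since $\Gamma^H_< = \Gamma_< \cap V(H)$ and $\pi_<$ restricted to $\Gamma$ has the property that $\gamma_< \in V(H)$ exactly when $\gamma \in \Gamma^H$, recalling $\Gamma^H = \Gamma \cap (V(H)+\phy)$) is equivalent to $\gamma - \gamma' \in \Gamma^H$. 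Hence the distinct translates $H + \gamma_<$, $\gamma \in \cB_<(r)$, are indexed by the image in the quotient group $\Gamma/\Gamma^H$ of the set $\cB(r)$ (more precisely of $\{\gamma \in \Gamma \mid \|\gamma_\vee\| \leq r\}$ intersected with the finitely many fibres that can bring $H+\gamma_<$ into the bounded set $B$; the condition $\|\gamma_<\| \leq r$ is then automatic once $r$ is large, or can be absorbed into constants since $V(H)+\phy$ meeting $B$ forces $\gamma_<$ into a bounded neighbourhood of $V(H)$).

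Next I would set up the counting. The quotient $\Gamma/\Gamma^H$ is a free abelian group of rank $k - \rk(H)$. The map $\gamma \mapsto \gamma_\vee$ does not descend to this quotient, but $\Gamma^H_\vee$ is a subgroup of $\phy$ of rank $\rk(H)$ (note $\pi_\vee$ is injective on $\Gamma$), so I would pick a complement: concretely, choose lattice elements $\gamma$ whose classes give a $\Z$-basis of $\Gamma/\Gamma^H$, together with a basis of $\Gamma^H$; then every $\gamma \in \Gamma$ with $\|\gamma_\vee\|\leq r$ lies within a bounded $\pi_\vee$-distance of a unique coset representative obtained by subtracting an element of $\Gamma^H$, and the number of cosets of $\Gamma^H$ that are reachable this way is comparable to the number of lattice points of the rank-$(k-\rk(H))$ lattice $\Gamma_\vee/\Gamma^H_\vee$-image lying in a ball of radius $O(r)$ (after projecting $\phy$ along $\Gamma^H_\vee$ onto a complementary $(d - \rk(H))$-dimensional subspace). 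Wait — this needs care: the rank of $\Gamma/\Gamma^H$ is $k - \rk(H)$, which can exceed $\dim \phy = d$, so the "reachable" cosets are not all of $\Gamma/\Gamma^H$; they form the sublattice-image in the quotient $\phy / \Gamma^H_\vee$, which has rank exactly... I would argue that the set of cosets with a representative of $\pi_\vee$-norm $\leq r$ and meeting $B$ is commensurable, up to constants, with the integer points in a box of dimensions $\asymp r$ in $k - \rk(H) - (\rk(H) \text{ absorbed}) $ ... the cleanest route: the subgroup $\Gamma^H$ has rank $\rk(H)$, and a full-rank complement to $\Gamma^H$ in $\Gamma$, when projected by $\pi_\vee$ and then quotiented by $\Gamma^H_\vee$, maps with finite kernel onto a lattice in a space of dimension $d - \rk(H)$; but the *extra* directions (the remaining $k - \rk(H) - (d-\rk(H)) = k - d$ of them) correspond to lattice directions whose $\pi_\vee$-image is bounded only together with their $\pi_<$-image — and the constraint that $H + \gamma_<$ meets the *bounded* set $B$ forces exactly $k-d-1$ of those internal directions to be bounded (the ones transverse to $V(H)$ inside $\intl$), leaving no further freedom. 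Adding up, the number of free $\asymp r$ directions is $(d - \rk(H)) + (k - d - ? )$ — I would instead just directly exhibit a box: parametrise translates $H + \gamma_<$ meeting $B$ by the finitely-many-to-one data $(\gamma_\vee \bmod \Gamma^H_\vee, \gamma_< \text{ transverse part})$, note the transverse part is bounded, and conclude $N(H,B,r) \asymp r^{k - \rk(H) - 1}$ by counting a lattice of rank $k - \rk(H) - 1$ in a ball of radius $O(r)$.

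Concretely, the cleanest packaging: fix a $(k-d-1)$-dimensional subspace $U \leq \intl$ with $\intl = V(H) \oplus U$ (possible since $\dim V(H) = k-d-1$); every translate of $H$ is $H + u$ for a unique $u \in U$, and it meets $B$ only for $u$ in a bounded set $B_U \subseteq U$. So $N(H,B,r)$ equals the number of distinct $u \in \{\gamma_< \text{ projected to } U \mid \gamma \in \Gamma, \|\gamma_\vee\| \leq r\} \cap B_U$. Now $\Gamma \to U$, $\gamma \mapsto (\text{$U$-component of }\gamma_<)$, has kernel exactly $\Gamma \cap (V(H) + \phy) = \Gamma^H$, so this is counting elements of $(\Gamma/\Gamma^H)$-image in $U$ lying in the bounded set $B_U$ and admitting a lift with $\pi_\vee$-norm $\leq r$. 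The image $\Gamma_{/U} := \{U\text{-component of }\gamma_< \mid \gamma \in \Gamma\}$ is dense in $U$, so inside the *fixed bounded set* $B_U$ I would instead reverse the roles: I expect that the right final count uses that among all $\gamma \in \cB(r)$, those whose $U$-component lands in $B_U$ — combined with the lattice structure — give $\asymp r^{k - \rk(H) - 1}$ *distinct* translates, because once we restrict $\gamma_<$ to a bounded neighbourhood of $V(H)$ the remaining freedom in $\gamma$ with $\|\gamma_\vee\| \leq r$ is an $(\rk(H) + (k-d-1-?))$...

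The honest summary of my plan, then: (i) reduce to counting cosets of $\Gamma^H$ represented by $\gamma$ with $\|\gamma_\vee\|\leq r$ and $\gamma_<$ near $V(H)$; (ii) identify the relevant quotient lattice and show it has rank $k - \rk(H) - 1$ — this is the number obtained from $\mathrm{rk}(\Gamma) - \mathrm{rk}(\Gamma^H) = k - \rk(H)$ minus one for the single direction transverse to $V(H)$ in $\intl$ being forced bounded by the condition "meets $B$"; wait, that would need the remaining directions to all contribute to $\pi_\vee$-norm, which requires $\pi_\vee$ injective on the complement — true since $\pi_\vee$ is injective on all of $\Gamma$; (iii) apply the standard lattice-point count: the number of points of a rank-$m$ lattice in a ball of radius $R$ in its span is $\asymp R^m$, giving $N(H,B,r) \asymp r^{k-\rk(H)-1}$. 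The main obstacle I anticipate is step (ii): carefully verifying that the sublattice of $\Gamma/\Gamma^H$ consisting of classes realised by $\gamma$ with $\gamma_<$ in a bounded neighbourhood of $V(H)$ is genuinely of rank $k-\rk(H)-1$ and not smaller — i.e.\ that there is no further degeneracy — and that its $\pi_\vee$-image is a genuine lattice (full rank in a $(k-\rk(H)-1)$-dimensional subspace of $\phy$, using injectivity of $\pi_\vee$ on $\Gamma$ and hence on $\Gamma/\Gamma^H$ modulo the bounded ambiguity). Since the excerpt says this lemma's proof is deferred and follows from Lemmas \ref{lem: face-cuts} and \ref{lem: nice cuts} in the later section, I would, in the final write-up, instead phrase the argument so that it reduces cleanly to those two results; but absent their statements, the self-contained route above via cosets of $\Gamma^H$ and a lattice-point count is the approach I would take.
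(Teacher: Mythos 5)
Your overall strategy---reduce to counting cosets of $\Gamma^H$ represented by $\gamma \in \cB(r)$ landing in a bounded slab around $V(H)$---is a reasonable starting point, and it differs from the paper's route (the paper reduces, via Lemmas \ref{lem: face-cuts} and \ref{lem: nice cuts}, to a decomposition $\gamma = \gamma' + t_i + \gamma^b + \gamma^H$ using a basis $b$ whose $\pi_<$-image spans $\intl$). But your write-up has a genuine gap, and also several concrete errors that indicate the underlying picture isn't yet right.

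The gap is at steps (ii)--(iii). You claim to ``identify the relevant quotient lattice and show it has rank $k - \rk(H) - 1$,'' and then invoke the standard estimate for points of a rank-$m$ lattice in a ball of radius $R$. But the quotient $\Gamma/\Gamma^H$ has rank $k - \rk(H)$, not $k - \rk(H) - 1$; the condition that $H+\gamma_<$ meets $B$ does not cut out a rank-$(k-\rk(H)-1)$ sublattice, it restricts you to a \emph{thin slab} (bounded in the one direction $\bar\pi_U$ transverse to $V(H)$, of size $\asymp r$ in the other $k - \rk(H) - 1$ directions) inside a rank-$(k-\rk(H))$ lattice. The number of lattice points in such a slab is \emph{not} given by the ball-in-span estimate; one has to argue either (a) via a complementary splitting $G \cong G_0 \oplus \Z e$ with $e$ a primitive element whose $\bar\pi_U$-image is small, so that each $g_0 \in G_0$ of norm $\lesssim r$ contributes exactly one reachable coset, or (b) by the paper's route, where the constraint ``$\gamma_< \in U$ bounded'' is handled by the subgroup $\Gamma^b$ with $b_<$ a full-rank lattice in $\intl$, which makes the number of admissible $\gamma^b$ uniformly bounded and then reduces to a genuine full-rank ball count for $G' \cap \cB(r)$. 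You assert the conclusion without carrying out either. Relatedly, you write that the condition $\|\gamma_<\| \leq r$ is ``automatic'' or ``can be absorbed'' because meeting $B$ ``forces $\gamma_<$ into a bounded neighbourhood of $V(H)$''---but that only bounds the $U$-component of $\gamma_<$; the $V(H)$-component of $\gamma_<$ is genuinely constrained by the $\leq r$ bound and cannot be dropped.

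There are also outright errors in the set-up: you write ``fix a $(k-d-1)$-dimensional subspace $U \leq \intl$ with $\intl = V(H) \oplus U$,'' but $\dim V(H) = k-d-1$, so $U$ must be $1$-dimensional. You refer to $\Gamma^H_\vee$ as though it were a lattice in $\phy$ and speak of projecting $\phy$ ``along $\Gamma^H_\vee$ onto a complementary $(d - \rk(H))$-dimensional subspace,'' but $\rk(\Gamma^H_\vee) = \rk(H)$ can exceed $d$, in which case $\Gamma^H_\vee$ is dense in $\phy$ and there is no such complementary subspace; $d - \rk(H)$ can be negative. You also describe a ``sublattice of $\Gamma/\Gamma^H$ consisting of classes realised by $\gamma$ with $\gamma_<$ in a bounded neighbourhood of $V(H)$,'' which is not a subgroup at all---it is a bounded-slab condition. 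Finally, a conceptual remark: the paper's deferred proof via Lemmas \ref{lem: face-cuts} and \ref{lem: nice cuts} produces the exponent $d - \rk(H) + \beta_H$; this equals $k - \rk(H) - 1$ precisely because, in the quasicanonical setting, $\Gamma^H_<$ spans $V(H)$ (Lemma \ref{lem: qc => lattice}), i.e.\ $\beta_H = k-d-1$. Your argument never invokes this; if you intend to prove the lemma independently of the paper's machinery you should be explicit that you are counting \emph{hyperplane} translates (not face translates), and resolve the thin-slab count carefully, since that is where the real content lies.
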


The above lemma bounds the number of cuts made by the $\cB(r)$ translates of the hyperplanes to the window. Lemma \ref{lem: count components} then allows us to work out the number of cut regions in the window, which by Corollary \ref{cor: complexity versus cuts} allows us to bound the complexity function, proving Theorem \ref{thm: complexity}:

\begin{proof}[Proof of Theorem \ref{thm: complexity}] First we show that $r^\alpha \ll p(r)$. By Corollary \ref{cor: complexity versus cuts} we need to show that $r^\alpha \ll \# \sC(r)$. Consider the connected components of $\intl$ with $\cB_<(r)$ translates of the hyperplanes removed. Such a connected component either does not intersect $W$, in which case it does not contribute a connected component to $\sC(r)$, or it intersects $W$, in which case it contributes precisely one (by convexity) connected component to $\sC(r)$. Consider a small ball $B \subseteq W$ (for the purposes of visualisation, take it far from the boundary of $W$). It is not hard to see, by basic linear algebra and continuity, that $B$ may be taken small enough so that if $C$ is a cut region bounded by translates $H_1,\ldots,H_n$ which intersect $B$ non-trivially, then $C$ necessarily intersects $W$ and so contributes a connected component to $\sC(r)$ in $W$. Hence we may obtain a lower bound for $\sC(r)$ by considering the number of connected components in $\intl$ given by removing $\cB_<(r)$ translates of the hyperplanes of $\sH$ from $\intl$ which intersect $B$ non-trivially. By Lemma \ref{lem: hyperplane hitting ball} there are $\gg r^{k-\mathrm{rk}(H)-1}$ such hyperplanes. It follows from the lower bound of Lemma \ref{lem: count components} that
\[
p(r) \gg \# \sC(r) \gg \max_{f \in \sF} \prod_{j=1}^{k-d} r^{k-\mathrm{rk}(H_{f_j})-1} = \max_{f \in \sF} r^{\alpha_f} \gg r^\alpha.
\]

We now prove the upper bound $p(r) \ll r^\alpha$. By Corollary \ref{cor: complexity versus cuts} we have that $p(r) \ll \# \sC(r)$. Finding an upper bound for the number of connected components of $\sC(r)$ is simpler: any connected component of $\intl$ with $\cB_<(r)$ translates of hyperplanes removed contributes at most one connected component to $\sC(r)$ (that is, if it is contained in $W$) and has bounding hyperplanes intersecting $W$. So by taking $B=W$ in Lemma \ref{lem: hyperplane hitting ball} and also Lemma \ref{lem: count components},
\[
p(r) \ll \# \sC(r) \ll \sum_{f \in \sF} \prod_{j=1}^{k-d} r^{k-\mathrm{rk}(H_{f_j})-1} = \sum_{f \in \sF} r^{\alpha_f} \ll r^\alpha.
\]
\end{proof}

\section{Removing the quasicanonical condition} \label{sec: removing the quasicanonical condition}
Notice that by Proposition \ref{prop: acc <-> cuts} we have that $\sC(r) \preceq \sA(r)$ for sufficiently large $r$, whether the window is quasicanonical or not; that is, the cut regions are smaller than the acceptance domains. So $p(r) \leq \# \sC(r)$ for sufficiently large $r$. Hence an upper bound on the number of cut regions may be found just as above, with the result that $p(r) \ll r^\alpha$ with $\alpha$ as given in Theorem \ref{thm: complexity}. However, if we drop the quasicanonical condition then this estimate is not necessarily optimal.

For each $H \in \sH$, find elements of $\Gamma$
\begin{equation} \label{eq: H-basis}
b = (h_1^H,h_2^H,\ldots,h_{\beta_H}^H,f_{\beta_H + 1}^H, f_{\beta_H+2}^H,\ldots,f_{k-d}^H),
\end{equation}
for which $b_<$ is a basis for $\intl$ and each $h_i^H \in \Gamma^H$ with $\beta_H$ as large as possible. Since one can begin constructing $b$ by taking elements in $\Gamma^H$, one sees that $\beta_H$ is given by the dimension of the $\R$-linear span of $\Gamma^H_<$. From this it is clear that the $\R$-span of the vectors $f^H_i$ intersects the $\R$-span of $\Gamma^H_<$ only at $0$.

In this section we shall prove that the correct growth rate of the complexity function for a general polytopal cut and project set is determined by the numbers $\rk(H)$ and $\beta_H$:

\begin{theorem} \label{thm: generalised complexity} Suppose given a polytopal $k$-to-$d$ cut and project scheme. Consider the collection $\sF$ of flags of $\sH$. Then $p(r) \asymp r^{\alpha'}$ where
\[
\alpha' \coloneqq \max_{f \in \sF} \alpha_f', \ \ \mathrm{for} \ \ \alpha_f' \coloneqq \sum_{H \in f} (d - \rk(H) + \beta_H).
\]
\end{theorem}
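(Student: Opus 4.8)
The plan is to mimic the proof of Theorem \ref{thm: complexity}, replacing the intermediary of cut regions (which was only valid under the quasicanonical assumption) by a direct analysis of the acceptance domains $\sA(r)$. The upper bound $p(r) \ll r^{\alpha'}$ is in some sense the heart of the matter, while the lower bound $r^{\alpha'} \ll p(r)$ should follow by exhibiting enough distinct acceptance domains. Note that when the scheme is quasicanonical one has $\rk(H) \geq k-d-1$ and $\beta_H = k-d-1$, so $d - \rk(H) + \beta_H = k - \rk(H) - 1$ and $\alpha' = \alpha$; thus the new theorem genuinely generalises the old one, and I would remark on this compatibility.

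For the upper bound, recall from Corollary \ref{cor: acceptance domains} that each acceptance domain $A_P \in \sA(r)$ is an intersection of $\cS_<(r)$-translates of $\iW$ and $\W$, hence is a connected component of the complement of a union of $\cS_<(r)$-translates of the hyperplanes $H \in \sH$ (here I use $\partial W \subseteq \bigcup_{\sH} H$, exactly as in the proof of Proposition \ref{prop: acc <-> cuts}(2)). So $p(r) = \# \sA(r) \leq \# \sC'(r)$, and by Lemma \ref{lem: count components} applied in $\intl$ it suffices to count, for each $H \in \sH$, the number of $\cS_<(r)$-translates (equivalently, up to linear rescaling, $\cB_<(r)$-translates) of $H$ meeting $W$. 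Here is where the numbers $\beta_H$ enter: writing an arbitrary $\gamma \in \cB(r)$ in the basis $b$ of Equation \ref{eq: H-basis}, the component of $\gamma_<$ in the $\R$-span of $\Gamma^H_<$ may be absorbed (it only slides $H+\gamma_<$ along itself), so $H+\gamma_<$ is determined by the remaining $k-d-\beta_H$ coordinates, but those coordinates are constrained to a bounded range by the requirement that $H + \gamma_< $ meet the bounded set $W$. Counting the $\gamma \in \cB(r)$ up to this equivalence and up to the boundedness constraint, the number of relevant translates of $H$ is $\asymp r^{?}$ for the appropriate exponent; I expect, after carefully setting up the lattice coordinates, that this exponent is exactly $d - \rk(H) + \beta_H$, generalising Lemma \ref{lem: hyperplane hitting ball}. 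This is the computation I would package as a replacement for (the proof of) Lemma \ref{lem: hyperplane hitting ball}; it is the step I expect to be the main obstacle, as one must simultaneously keep track of (i) the rank $\rk(H)$ of the stabiliser $\Gamma^H$, which controls how many of the $k$ lattice directions leave $H+\gamma_<$ fixed up to translation along itself; (ii) the dimension $\beta_H$ of its $\R$-span in $\intl$, which controls how much of the internal displacement is irrelevant; and (iii) the fact that the hyperplane must meet the bounded window. Feeding the resulting bound into Lemma \ref{lem: count components} then gives $p(r) \ll \sum_{f} \prod_{H \in f} r^{d - \rk(H)+\beta_H} = \sum_f r^{\alpha'_f} \ll r^{\alpha'}$.

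For the lower bound, I would imitate the lower-bound half of the proof of Theorem \ref{thm: complexity}, but now I cannot pass through cut regions, so I must produce genuinely distinct acceptance domains. The strategy is to fix a flag $f$ achieving the maximum $\alpha'_f = \alpha'$, pick a small ball $B$ in the interior of $W$, and show that the $\cB_<(r)$-translates of the hyperplanes in $f$ which meet $B$ cut $B$ (and hence $W$) into $\gg \prod_{H \in f} r^{d-\rk(H)+\beta_H}$ connected components, each of which lies in a distinct acceptance domain. The subtlety absent in the quasicanonical case is that distinct components of this cut need not correspond to distinct acceptance domains in general — but if we only keep cuts meeting the tiny ball $B$, then two points on opposite sides of such a cut are separated by $\partial W + \gamma_<$ for some $\gamma \in \cB(r)$ once $\gamma_\vee$ (and hence the relevant displacement) has norm $O(r)$, so by Lemma \ref{lem: displacements} they support different $r$-patches, provided the hyperplane translate actually realises a boundary facet of $W+\gamma_<$ rather than a phantom extension of it. Checking that the flag hyperplanes meeting $B$ can be taken to genuinely contribute boundary facets is the only delicate point; once that is arranged, the count of components is again governed by Lemma \ref{lem: count components} together with the generalised Lemma \ref{lem: hyperplane hitting ball}, giving $p(r) \gg \max_f \prod_{H \in f} r^{d-\rk(H)+\beta_H} = r^{\alpha'}$, and combining the two bounds yields $p(r) \asymp r^{\alpha'}$.
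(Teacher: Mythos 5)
Your strategic outline is close to the paper's, but there are two concrete issues, one in each half of the argument.

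\textbf{Upper bound.} Your intended starting point $p(r) = \# \sA(r) \leq \# \sC'(r)$ is sound, but the parenthetical ``(equivalently, up to linear rescaling, $\cB_<(r)$-translates)'' is false and, if pursued, would derail the computation. The set $\cS(r)$ has $\gamma_< \in W-W$, a fixed bounded region of $\intl$, while $\cB(r)$ has $\|\gamma_<\| \leq r$; neither is a rescaling of the other. If you count $\cB_<(r)$-translates of $H$ meeting $W$, the only internal constraint is on the single coordinate of $\gamma_<$ perpendicular to $V(H)$, and the count comes out $\asymp r^{k-\rk(H)-1}$, giving the old exponent $\alpha$ rather than $\alpha'$. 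Your subsequent claim that ``those coordinates are constrained to a bounded range by the requirement that $H+\gamma_<$ meet the bounded set $W$'' is the precise point of breakdown: that requirement bounds only the one normal coordinate, not all $k-d-\beta_H$ of them. The paper avoids this by counting \emph{face-cuts}: translates $\partial_H + \gamma_<$ of the bounded facet $\partial_H = H \cap W$ meeting $W$ (see Lemma \ref{lem: face-cuts}), which does pin $\gamma_<$ to a fixed bounded subset $U$ of $\intl$. One can equivalently stick with $\cS(r)$ and derive the bound on $\gamma_<$ from $\gamma_< \in W-W$; either way one needs this full $(k-d)$-dimensional internal constraint, of which the $\beta_H$-dimensional part in $\R\text{-span}(\Gamma^H_<)$ is absorbed modulo $\Gamma^H$, leaving $k-d-\beta_H$ genuinely constrained coordinates and the exponent $k - \rk(H) - (k-d-\beta_H) = d - \rk(H) + \beta_H$. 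You flag the resulting counting lemma (the replacement of Lemma \ref{lem: hyperplane hitting ball}) as the main obstacle and leave it undone; the paper carries it out in Lemma \ref{lem: number of face-cuts} via the decomposition $\Gamma = G' + T + \Gamma^H + \Gamma^b$ using the basis $b$ of Equation \ref{eq: H-basis}.

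\textbf{Lower bound.} Here you correctly identify the obstacle — that a hyperplane translate $H + \gamma_<$ separating two points of a small ball $B$ need not be realised by a window translate $\partial W + \gamma'_<$ for any $\gamma' \in \cB(O(r))$, so opposite sides of the cut need not give distinct $r$-patches — but you explicitly defer the resolution (``the only delicate point''). This is not a minor check: for general polytopal windows it is exactly where the quasicanonical hypothesis was previously needed, and supplying the missing construction is the whole point of the theorem. The paper resolves it by replacing your ball $B$ with a small box $X$ that is itself an intersection of $\Gamma_<$-translates of $W$ and $W^c$ with sides aligned to the flag (Lemma \ref{lem: box}), and by restricting to \emph{nice cuts}: translates $\partial_H + z$ with $(\partial_H + z)\cap X \neq \emptyset$ and $(W+z)\cap X = (H^+ + z)\cap X$, so that the cut through $X$ is genuinely the boundary of a window translate. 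Lemmas \ref{lem: nice U} and \ref{lem: nice cuts} then produce $\gg r^{\alpha'_H}$ nice cuts with distinct intersections, and because both $X$ and the nice-cut half-spaces are expressible via window translates, the resulting regions of $X$ are contained in distinct acceptance domains. Your ball $B$, lacking this expressibility, does not by itself let you conclude distinctness of acceptance domains; the box construction is the key missing ingredient.

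Your observation that $\alpha' = \alpha$ in the quasicanonical case (since $\beta_H = k-d-1$) is correct and is also recorded in the paper's remark following the theorem.
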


As one simple consequence, we may bound from above the rate of growth of the complexity of \emph{any} polytopal cut and project set, as below. It is also the case that $p(r)\gg r^d$ when the polytopal cut and project set is aperiodic, but we postpone the proof to the  forthcoming part II of this article. 

\begin{corollary} Any polytopal $k$-to-$d$ cut and project set is such that $p(r) \ll r^{d(k-d)}$. \end{corollary}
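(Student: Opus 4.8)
The plan is to derive this as an immediate consequence of Theorem \ref{thm: generalised complexity}, so it suffices to show that $\alpha' \leq d(k-d)$. Recall $\alpha' = \max_{f \in \sF} \alpha_f'$ where $\alpha_f' = \sum_{H \in f}(d - \rk(H) + \beta_H)$, and a flag $f$ of $\sH$ consists of exactly $k-d$ hyperplanes (since $\sH$ lives in the $(k-d)$-dimensional internal space $\intl$). So it is enough to bound each summand $d - \rk(H) + \beta_H$ by $d$ uniformly in $H$; summing over the $k-d$ elements of a flag then gives $\alpha_f' \leq d(k-d)$, hence $\alpha' \leq d(k-d)$, and Theorem \ref{thm: generalised complexity} finishes the proof.

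To see that $d - \rk(H) + \beta_H \leq d$, i.e.\ that $\beta_H \leq \rk(H)$, I would argue as follows. By Definition \ref{def: stabiliser} and the discussion following it, $\Gamma^H$ is a free abelian group of rank $\rk(H)$, and $\beta_H$ is the dimension of the $\R$-linear span of $\Gamma^H_< = \pi_<(\Gamma^H)$ in $\intl$ (this is exactly what the construction of the basis in Equation \ref{eq: H-basis} establishes). Since $\pi_<$ restricted to $\Gamma^H$ is a homomorphism onto $\Gamma^H_<$, the group $\Gamma^H_<$ is generated by $\rk(H)$ elements, so the vector space it spans over $\R$ has dimension at most $\rk(H)$. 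Thus $\beta_H \leq \rk(H)$, as required. I would also note that this argument needs no hypotheses beyond the scheme being polytopal, which is all that Theorem \ref{thm: generalised complexity} assumes.

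The proof is essentially mechanical once Theorem \ref{thm: generalised complexity} is in hand; there is no real obstacle. The only points requiring a moment's care are: (i) confirming that every flag in the $(k-d)$-dimensional space $\intl$ has exactly $k-d$ members, which is Definition \ref{def: flag} applied with $X = \intl$; and (ii) making sure $\sF \neq \emptyset$ so that the maximum defining $\alpha'$ is actually achieved --- but if no flag exists then $\sH$ contains no flag and, by Proposition \ref{prop: polyhedra}(5), the window $W$ would have no vertices, contradicting that $W$ is a compact polytope with non-empty interior; so $\sF \neq \emptyset$. Hence a clean write-up reads: ``By Theorem \ref{thm: generalised complexity}, $p(r) \asymp r^{\alpha'}$ with $\alpha' = \max_{f \in \sF} \sum_{H \in f}(d - \rk(H) + \beta_H)$. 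Each flag $f$ of $\sH \subseteq \intl$ has exactly $k-d$ elements. For each $H$, $\Gamma^H_<$ is generated by $\rk(H)$ elements, so $\beta_H \leq \rk(H)$ and therefore $d - \rk(H) + \beta_H \leq d$. Summing over the $k-d$ hyperplanes of any flag gives $\alpha_f' \leq d(k-d)$, hence $\alpha' \leq d(k-d)$ and $p(r) \ll r^{d(k-d)}$.''
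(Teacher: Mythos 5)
Your proof is correct and follows essentially the same route as the paper: the paper also deduces the bound from Theorem \ref{thm: generalised complexity} by observing $\beta_H \leq \rk(H)$, hence $d - \rk(H) + \beta_H \leq d$, and then summing over the $k-d$ hyperplanes of a flag. Your write-up simply supplies the (brief) justification for $\beta_H \leq \rk(H)$ and for $\sF \neq \emptyset$ that the paper leaves implicit.
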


\begin{proof} Since $\beta_H \leq \rk(H)$ we have that $\alpha_f' = d - \rk(H) + \beta_H \leq d - \beta_H + \beta_H = d$. Since a flag contains $(k-d)$ elements, we see that $\alpha' \leq d(k-d)$. \end{proof}

Suppose that we set the data of $\tot$, $\phy$, $\intl$ and $W$. Then a generic choice of $\Gamma$ (interpreted, for example, with respect to the Haar measure in the space of lattices) results in an allowed cut and project scheme (i.e., with $\pi_\vee$ injective on $\Gamma$ and $\Gamma_<$ dense), and also with each $\Gamma^H$ trivial (since $\Gamma^H$ is non-trivial precisely when there is a non-zero lattice point in the codimension $1$ set $V(H) + \phy$). So a generic cut and project scheme has `maximal complexity'.

\begin{corollary} For a generic lattice as above, the resulting polytopal cut and project sets have $p(r) \asymp r^{d(k-d)}$. \end{corollary}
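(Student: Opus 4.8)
The plan is to read off the value of $\alpha'$ from Theorem~\ref{thm: generalised complexity} in the generic situation, where all the stabiliser subgroups $\Gamma^H$ are trivial. No new machinery is needed: the corollary is essentially a bookkeeping consequence of the preceding results, together with the genericity observations already recorded in the paragraph above the statement.

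Concretely, I would first fix the data $\tot$, $\phy$, $\intl$ and $W$, with $W$ polytopal; this fixes the finite collection $\sH$ of affine hyperplanes of $\intl$, the codimension $1$ subspaces $V(H)$ for $H \in \sH$, and hence the finitely many codimension $1$ subspaces $V(H) + \phy$ of $\tot$. As noted just above the statement, a generic lattice $\Gamma$ (with respect to Haar measure on the space of lattices) avoids each $V(H)+\phy$ away from the origin, and also satisfies $\pi_\vee$ injective on $\Gamma$ and $\Gamma_<$ dense, so it defines an allowed polytopal cut and project scheme; each of these is the complement of a measure-zero set, so their intersection is still generic. For such $\Gamma$, $\Gamma^H = \Gamma \cap (V(H)+\phy) = \{0\}$ for every $H$, hence $\rk(H) = 0$, and since $\Gamma^H_< = \{0\}$ its $\R$-linear span is $\{0\}$, giving $\beta_H = 0$.

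Next I would evaluate $\alpha_f'$. Every flag $f \in \sF$ consists of exactly $\dim(\intl) = k-d$ hyperplanes by Definition~\ref{def: flag}, so
\[
\alpha_f' = \sum_{H \in f}\big(d - \rk(H) + \beta_H\big) = \sum_{H \in f} d = d(k-d).
\]
One should also check $\sF \neq \emptyset$: since $W$ is compact it cannot contain a line, so $\bigcap_{H \in \sH} V(H) = \{0\}$, and then Lemma~\ref{lem: simple flags} guarantees a flag among the $V(H)$. Hence $\alpha' = \max_{f \in \sF}\alpha_f' = d(k-d)$, and Theorem~\ref{thm: generalised complexity} yields $p(r) \asymp r^{d(k-d)}$. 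There is no genuine obstacle here; the only point that merits care is the genericity claim, i.e.\ confirming that avoidance of a fixed finite union of codimension $1$ subspaces of $\tot$, together with injectivity of $\pi_\vee|_\Gamma$ and density of $\Gamma_<$, are each generic conditions on $\Gamma$ — which is exactly what the discussion preceding the corollary asserts.
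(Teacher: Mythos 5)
Your proposal is correct and is essentially the argument the paper intends (the paper states this corollary without a written proof, relying on the genericity discussion immediately preceding it): trivial stabilisers give $\rk(H)=\beta_H=0$, so every flag contributes $\alpha_f'=d(k-d)$, and Theorem \ref{thm: generalised complexity} gives the conclusion. Your extra checks — that the genericity conditions are each complements of null sets and that $\sF\neq\emptyset$ because compactness of $W$ forces $\bigcap_{H\in\sH}V(H)=\{0\}$ — are correct and slightly more careful than the paper's own presentation.
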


\begin{remark}
In both the quasi- and almost canonical cases we have that each $\beta_H = k-d-1$, the dimension of $V(H)$. It follows as in \cite{Jul10} that $p(r)\ll r^{d(k-d)}$ and that $p(r) \gg r^d$ in the aperiodic case (the relevant arguments can be found in Theorem 4.1, also see Remark 2.4). 
\end{remark}

\subsection{Proof strategy}
Let us give an informal explanation for why the exponent of Theorem \ref{thm: generalised complexity} is the correct one. Take a face $\partial_H$ of the window, associated to the hyperplane $H$. We wish to consider which `cuts' $W \cap (\partial_H + \gamma_<)$ can occur as $\gamma$ is ranged over $\cB(r)$.

Since the rank of $\Gamma$ is $k$, there are $\asymp r^k$ choices for $\gamma$. However, loosely speaking, we lose $\rk(H)$ in the exponent because translates differing by an element of $\Gamma^H_<$ are parallel. Given a translated face $\partial_H + \gamma_<$, to ensure that it actually intersects the window we are constrained in $(k-d)$ of the coordinates, which we use to translate the face back to the vicinity of the window. Some number of these constrained coordinates, namely, $\beta_H$ of them, are already taken into account in the number $\rk(H)$ corresponding to parallel cuts. So we obtain the exponent
\[
k - \rk(H) - ((k-d)-\beta_H) = d - \rk(H) + \beta_H
\]
for the relevant number of cuts of the face with the window.

\subsection{Proof of upper bound of Theorem \ref{thm: generalised complexity}}
We now make the above ideas precise. Given a face $\partial_H$ of $W$ corresponding to $H \in \sH$, we call $\partial_H + \gamma_<$ a {\bf face-cut} if $\partial_H + \gamma_<$ intersects $W$ non-trivially. If additionally $\gamma \in \cB(r)$ we write $\gamma \in \Delta_H(r)$. We have the following simple upper bound on the complexity, which we get by extending the face cuts $\partial_H + \gamma_<$ to `full cuts' $H + \gamma_<$.

\begin{lemma} \label{lem: upper bound}Consider the set $\sD$ of connected components of $W$ with $(\Delta_H(r))_<$ translates of the hyperplanes $H \in \sH$ removed. That is, we consider the connected components of $W - [\bigcup_{H \in \sH} H + (\Delta_H(r))_<]$. Then $p(r) \leq \# \sD$. \end{lemma}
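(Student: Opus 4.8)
The plan is to prove $p(r) \le \#\sD$ by constructing an injection from the set $\sA(r)$ of acceptance domains of $r$-patches into $\sD$; recall that $p(r) = \#\sA(r)$, since by Corollary \ref{cor: acceptance domains} two $r$-patches agree up to translation precisely when their star images lie in the same acceptance domain, and every acceptance domain of $\sA(r)$ is realised (and distinct ones are disjoint) by repetitivity and density of $\Gamma_<$. The crucial geometric observation is that the boundary of each acceptance domain is already covered by the cut hyperplanes removed from $W$ to form $\sD$.

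First I would fix an $r$-patch $P = P(y,r)$ and its acceptance domain $A = A_P \in \sA(r)$. Since the trivial displacement $0$ lies in $P_\text{in}$, Equation \ref{eq: acceptance domain} gives $A \subseteq \iW$, and taking boundaries of the finite intersection there yields $\partial A \subseteq \bigcup_{\gamma \in P_\text{in} \cup P_\text{out}} (\partial W - \gamma_<)$. Using $\partial W = \bigcup_{H \in \sH} \partial_H$ (Proposition \ref{prop: polyhedra}(3), with $\partial_H = W \cap H$ the facet in $H$), every point of $\partial A$ lies in some $\partial_H - \gamma_<$ which, as $\partial A \subseteq \overline{A} \subseteq W$, meets $W$ and is therefore a face-cut. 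Here $\gamma \in \cS(r)$, so $\|\gamma_\vee\| \le r$ and $\gamma_< \in W - W$, whence $\|\gamma_<\| \le \operatorname{diam}(W) \le r$ once $r$ is large (which is all that is needed, the lemma being used only for the asymptotics of Theorem \ref{thm: generalised complexity}); hence $-\gamma \in \cB(r)$ and in fact $-\gamma \in \Delta_H(r)$. Consequently $\partial A$ is contained in $\Sigma \coloneqq \bigcup_{H \in \sH}\bigcup_{\delta \in \Delta_H(r)}(H + \delta_<)$, the union of hyperplanes whose removal defines $\sD$.

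Next, for each $A \in \sA(r)$ I would choose a point $x_A \in A$ avoiding the finitely many hyperplanes $H + \delta_<$ ($H \in \sH$, $\delta \in \Delta_H(r)$) --- possible since $A$ is open and nonempty and these hyperplanes have empty interior --- and let $D_A \in \sD$ be the connected component of $W \setminus \Sigma$ containing $x_A$. Since $D_A \cap \partial A \subseteq (W \setminus \Sigma) \cap \Sigma = \emptyset$, the connected set $D_A$ meets the open set $A$ but not $\partial A$, so $D_A \subseteq \overline{A}$ and hence $D_A \subseteq A$ (this is the same connectedness argument used for part (2) of Proposition \ref{prop: acc <-> cuts}). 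As distinct acceptance domains of $\sA(r)$ are pairwise disjoint, $D_{A_1} = D_{A_2}$ forces $A_1 = A_2$, so $A \mapsto D_A$ is injective and $p(r) = \#\sA(r) \le \#\sD$.

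The only real work --- more careful bookkeeping than genuine difficulty --- is in the second paragraph: one must track that the displacements occurring in $P_\text{in} \cup P_\text{out}$ project to vectors in $W - W$ in the internal space, so that their internal norm is bounded by $\operatorname{diam}(W)$ rather than by $r$, and that only those facet-translates actually meeting $W$ contribute to $\partial A$; these two facts together are what upgrades each boundary piece to a \emph{face-cut} indexed by an element of $\cB(r)$, which is precisely the improvement of this lemma over the cruder bound $p(r) \le \#\sC(r)$ coming from Proposition \ref{prop: acc <-> cuts}.
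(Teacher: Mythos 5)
Your proof is correct and follows essentially the same route as the paper's: the key observation in both is that $\partial A$ for $A \in \sA(r)$ is contained in the union of face-cuts $\partial_H + (\Delta_H(r))_<$ (hence in the removed hyperplanes), from which a connectedness argument finishes. You package this as an explicit injection $\sA(r) \to \sD$ (choose $x_A \in A$ off the hyperplanes and send $A$ to the component of $x_A$), whereas the paper's one-line argument shows each $D \in \sD$ lies in a unique $A$, leaving implicit that this map $\sD \to \sA(r)$ is surjective because the removed hyperplanes have empty interior; these are the same argument read in opposite directions. Your bookkeeping in the second paragraph---tracking that $\gamma \in \cS(r)$ has $\gamma_< \in W-W$, hence $-\gamma \in \cB(r)$ once $r \ge \operatorname{diam}(W)$, and that only facet-translates meeting $W$ are relevant so they qualify as face-cuts---is the same chain of reasoning the paper compresses into the phrase ``the boundaries of the acceptance domains are contained in the face-cuts,'' and you are right (and slightly more careful than the paper) to flag that this needs $r$ large enough for $\cS(r) \subseteq \cB(r)$, which is harmless since the lemma is only used asymptotically.
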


\begin{proof} By Equation \ref{eq: acceptance domain} the boundaries of the acceptance domains $A \in \sA(r)$ are contained in the face-cuts $\partial_H + (\Delta_H(r))_<$. Since each $\partial_H \subseteq H$ we have that $\partial A \subseteq \bigcup_{H \in \sH} H + \Delta_H(r)$. Hence each connected component of $\sD$ is contained in some unique acceptance domain, so $p(r) = \# \sA(r) \leq \# \sD$, as desired. \end{proof}

To obtain the upper bound of Theorem \ref{thm: generalised complexity} we wish to bound the number of face-cuts for each face.

\begin{lemma} \label{lem: face-cuts} For each $H \in \sH$ there is bounded set $U \subseteq \intl$ for which $\gamma \in \Delta_H(r)$ implies that $\gamma_< \in U$. \end{lemma}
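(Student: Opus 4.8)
The plan is to show that although $\gamma$ ranges over an unbounded set $\Delta_H(r)$ in $\Gamma$, the projection $\gamma_<$ to the internal space is forced into a bounded region simply because $\partial_H + \gamma_<$ must meet $W$. First I would recall that $\partial_H$ is the face $W \cap H$, which is a compact set lying in the affine hyperplane $H$. If $\partial_H + \gamma_<$ intersects $W$, then there are points $p \in \partial_H$ and $w \in W$ with $w = p + \gamma_<$, so $\gamma_< = w - p \in W - \partial_H \subseteq W - W$. Since $W$ is compact, $W - W$ is bounded, so taking $U \coloneqq W - W$ (or, if one prefers a set not depending on the choice of face, simply $U = W - W$ works uniformly for every $H$) gives the claim immediately.

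The only subtlety worth addressing is making sure this is genuinely what is being asked: $\Delta_H(r)$ was defined as those $\gamma \in \cB(r)$ for which $\partial_H + \gamma_<$ is a face-cut, i.e. intersects $W$ non-trivially. So the defining property of membership in $\Delta_H(r)$ already includes the intersection condition, and the computation above uses exactly that. The role of the constraint $\gamma \in \cB(r)$ (bounding $\|\gamma_\vee\|$ and $\|\gamma_<\|$) is not even needed for this particular lemma — the internal-space bound comes for free from the geometry of $W$ — though of course it will matter later when counting how many distinct such $\gamma$, modulo $\Gamma^H$, there are.

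There is essentially no obstacle here: the lemma is a one-line observation that compact windows have compact difference sets. The substance of the argument surrounding this lemma — the part I would expect to be genuinely delicate — is not this statement but the subsequent counting: using $U = W - W$ together with the basis $b$ from Equation \ref{eq: H-basis} to show that, modulo the stabiliser $\Gamma^H$, the number of distinct face-cuts coming from $\cB(r)$ is $\asymp r^{d - \rk(H) + \beta_H}$, because the $(k-d)-\beta_H$ coordinates in the $f_i^H$-directions are pinned down (up to bounded ambiguity) by the requirement $\gamma_< \in U$, while the remaining coordinates contribute $k - \rk(H) - ((k-d) - \beta_H)$ to the exponent. That bookkeeping, carried out in Lemma \ref{lem: nice cuts} and fed into Lemma \ref{lem: count components}, is where the real work lies; the present lemma is just the input that makes it possible to say "only boundedly many translation vectors in the constrained directions are relevant."
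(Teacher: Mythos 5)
Your proof is correct and is essentially the paper's argument: the paper also deduces boundedness of $\gamma_<$ from the fact that a point of $\partial_H + \gamma_<$ lands in $W$, writing $U$ as a $\kappa$-neighbourhood of $W-c$ for a centre $c \in \partial_H$ with $\partial_H \subseteq B_\kappa(c)$, which is the same observation as your $\gamma_< \in W - \partial_H \subseteq W-W$ in slightly different clothing. Your choice $U = W-W$ is, if anything, a touch cleaner and uniform over all $H$.
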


\begin{proof} Choose some `centre' $c \in \partial_H$ and $\kappa$ for which $\partial_H \subseteq B_\kappa(c)$. If $\gamma \in \Delta_H(r)$ (i.e., $W \cap (\partial_H + \gamma_<)$ is non-trivial) then we claim that $\gamma_<$ belongs to a $\kappa$-neighbourhood of $W-c$. Indeed, let $w \in W \cap (\partial_H + \gamma_<)$. So $w-c \in W-c$ and $w - c\in \partial_H + \gamma_< - c \subseteq B_\kappa + \gamma_<$, that is, $\gamma_< \in B_\kappa(w-c)$. \end{proof}

\begin{lemma} \label{lem: number of face-cuts} Let $H \in \sH$. The number of distinct translates of $H$ by the elements of $(\Delta_H(r))_<$ is $\ll r^{\alpha_H'}$, where $\alpha_H' \coloneqq d - \rk(H) + \beta_H$. \end{lemma}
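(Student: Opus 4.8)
\emph{Proof proposal.} The plan is to identify distinct translates of $H$ with cosets of $\Gamma^H$, and then to bound the number of such cosets meeting a suitable slab by projecting modulo $\mathrm{span}_\R(\Gamma^H)$, where the key point will be that the image of $\Gamma$ is \emph{discrete}.

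First I would record the elementary reduction. For $\gamma,\gamma'\in\Gamma$ one has $H+\gamma_< = H+\gamma'_<$ exactly when $(\gamma-\gamma')_<\in V(H)$, i.e.\ $\gamma-\gamma'\in\Gamma\cap\pi_<^{-1}(V(H)) = \Gamma\cap(V(H)+\phy) = \Gamma^H$. Hence the number of distinct translates of $H$ by elements of $(\Delta_H(r))_<$ equals the number of $\Gamma^H$-cosets in $\Gamma$ meeting $\Delta_H(r)$. By Lemma \ref{lem: face-cuts} there is a fixed bounded $U\subseteq\intl$ with $\Delta_H(r)\subseteq S_r \coloneqq \{\gamma\in\Gamma \mid \gamma_<\in U,\ \|\gamma_\vee\|\le r\}$, so it suffices to bound the number of $\Gamma^H$-cosets meeting $S_r$.

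Next comes the structural step. Let $V_H\coloneqq\mathrm{span}_\R(\Gamma^H)$; since $\Gamma^H$ is a discrete subgroup of $\tot$, $\dim V_H = \mathrm{rk}(\Gamma^H) = \mathrm{rk}(H)$, and $V_H\subseteq V(H)+\phy$, whence $\Gamma\cap V_H = \Gamma^H$. Choose any complement $V^c\le\tot$ to $V_H$, with linear projection $q\colon\tot\to V^c$ along $V_H$; then $\ker(q|_\Gamma) = \Gamma\cap V_H = \Gamma^H$, so $q$ separates exactly the $\Gamma^H$-cosets. Thus the quantity to bound is $\#\,q(S_r)\le\#\bigl(q(\Gamma)\cap q(U+B_r^\phy)\bigr)$, writing $B_r^\phy\coloneqq B_r(0)\cap\phy$. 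Now $q(\Gamma)\cong\Gamma/\Gamma^H$ has rank $k-\mathrm{rk}(H) = \dim V^c$ and has $\R$-span equal to $q(\tot)=V^c$; a finitely generated subgroup $G$ of a real vector space with $\mathrm{rk}(G) = \dim_\R\mathrm{span}(G)$ contains a finite-index subgroup which is a lattice in $\mathrm{span}(G)$, hence $G$ is discrete there. So $q(\Gamma)$ is a discrete subgroup of $V^c$.

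The last step is the volume count. We have $q(U+B_r^\phy) = q(U)+q(B_r^\phy)$, where $q(U)$ is bounded and $q(B_r^\phy)$ lies in a ball of radius $O(r)$ inside the subspace $q(\phy)\le V^c$. Its dimension is $d-\dim(\phy\cap V_H)$, and since $\pi_<|_{V_H}$ maps $V_H$ onto $\mathrm{span}_\R(\Gamma^H_<)$ — of dimension $\beta_H$ by definition — with kernel $\phy\cap V_H$, we get $\dim(\phy\cap V_H) = \mathrm{rk}(H)-\beta_H$, so $\dim q(\phy) = d-\mathrm{rk}(H)+\beta_H = \alpha_H'$ (in particular $\alpha_H'\ge 0$, as $\phy\cap V_H\subseteq\phy$). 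Hence $q(U+B_r^\phy)$ lies in a set of the form (bounded set) $+$ ($\alpha_H'$-dimensional ball of radius $O(r)$), which has $\dim V^c$-volume $\ll r^{\alpha_H'}$, noting $\dim V^c = k-\mathrm{rk}(H)\ge\alpha_H'$. A standard separation/packing estimate for the discrete group $q(\Gamma)$ then bounds the number of its points in this region by $\ll r^{\alpha_H'}$, which is the claim.

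The main obstacle is precisely the non-discreteness hinted at in the proof strategy: crude projections of $\Gamma$ (onto $\intl$, or modulo $\Gamma^H$ in a badly chosen complement, or along $\phy$) typically have dense image, so bounded regions may contain infinitely many lattice points and there is no clean count available. The resolution is to quotient by the \emph{span} $V_H$ of $\Gamma^H$, which is just large enough that $\Gamma/\Gamma^H$ has rank equal to $\dim(\tot/V_H)$ — forcing discreteness — yet small enough that the window constraint still confines $q(\Gamma)$ to a region growing like $r^{\alpha_H'}$. The other delicate point is the bookkeeping identity $\dim(\phy\cap V_H) = \mathrm{rk}(H)-\beta_H$, which is exactly what converts the naive exponent $d$ into $d-\mathrm{rk}(H)+\beta_H$.
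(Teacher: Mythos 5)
Your argument is correct and reaches the right exponent $\alpha_H' = d - \rk(H) + \beta_H$ (the statement as printed has a sign typo, reading $-\beta_H$, but $+\beta_H$ is what is used in Theorem \ref{thm: generalised complexity}, the surrounding discussion, and Lemma \ref{lem: nice cuts}). Your route is genuinely different from the paper's. The paper introduces the auxiliary rank-$(k-d)$ subgroup $\Gamma^b \leqslant \Gamma$ generated by the tuple $b$ of Equation \ref{eq: H-basis} (chosen so that $b_<$ is an $\intl$-basis), works with the quotient $G = \Gamma/(\Gamma^H + \Gamma^b)$ of free rank $\alpha_H'$, lifts the free part to a subgroup $G'\leqslant\Gamma$, and counts: the number of admissible $\gamma'\in G'$ in a ball of radius $\ll r$ is $\ll r^{\alpha_H'}$, and for each such $\gamma'$ the $\Gamma^b$-component is pinned to boundedly many choices by $\gamma_< \in U$ together with the fact that $b_<$ is a basis. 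You instead project $\tot$ along the real span $V_H=\mathrm{span}_\R(\Gamma^H)$, so that $q(\Gamma)$ is an honest lattice in the complement $V^c$ with $\ker(q|_\Gamma)=\Gamma^H$, and then confine $q(\gamma)$ to a fixed-thickness slab about the $\alpha_H'$-dimensional subspace $q(\phy)$, intersected with a ball of radius $\ll r$, counting by a packing estimate. Your approach lands immediately on a discrete group (full rank in $V^c$ forces discreteness), bypassing the torsion group $T$ and the non-unique decomposition of elements of $\Gamma^H+\Gamma^b$ that the paper's argument has to navigate; in exchange you invoke a lattice-points-in-a-thickened-subspace estimate which the paper sidesteps, though that estimate is elementary and standard. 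The decisive linear algebra is the same in both: your rank-nullity computation $\dim q(\phy) = d - \dim(\phy\cap V_H) = d - (\rk(H)-\beta_H)$ mirrors the paper's $\rk(G) = k - \rk(H) - ((k-d)-\beta_H)$.
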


\begin{proof} Let $\Gamma^b$ be the rank $(k-d)$ subgroup of $\Gamma$ generated by the elements $b$ of Equation \ref{eq: H-basis}.

Consider the group $G = \Gamma /(\Gamma^H + \Gamma^b)$. Alternatively, it may be written as $\Gamma / (\Gamma^H + \Gamma^f)$, where $\Gamma^f$ is the rank $(k-d)-\beta_H$ free Abelian group generated by the vectors $f_i^H$ of Equation \ref{eq: H-basis}. We chose these basis vectors so that $\Gamma^H \cap \Gamma^f = \{0\}$, so it follows that $\Gamma^H + \Gamma^b$ has rank $\rk(H) + ((k-d)-\beta_H)$. Hence the rank of $G$ is $k$ minus this quantity, which is $\alpha_H'$.

The group $G$ is isomorphic to $\Z^{\alpha_H'} + T$, where $T$ is some finite torsion group. Let $t_1, \ldots, t_\ell \in \Gamma$ be a list of representatives for $T$. We let $G' \leqslant \Gamma$ be a free Abelian subgroup projecting isomorphically to the free part of $G$ under the quotient. So every element of $\gamma \in \Gamma$ may be written as
\[
\gamma = \gamma' + t_i + (\gamma^H + \gamma^b)
\]
where $\gamma' \in G'$, $\gamma^H \in \Gamma^H$ and $\gamma^b \in \Gamma^b$. Moreover, the choice of $\gamma'$ and $t_i$ is uniquely determined by $\gamma$.

We claim that for each $\gamma' \in G'$ and $\gamma^H \in \Gamma^H$, there are are most a bounded number of $t_i$ and $\gamma^b \in \Gamma^b$ for which $\gamma = (\gamma' + t_i + \gamma^b + \gamma^H) \in \Delta_H(r)$. There are only finitely many $t_i$, so there is no issue there. And since $b$ was chosen so that $b_<$ is a basis for $\intl$, there are only a bounded number of choices of $\gamma^b$ for which $(\gamma' + t_i + \gamma^H) + \gamma^b$ belongs to any fixed, bounded set $U$. Letting $U$ be as in Lemma \ref{lem: face-cuts}, we see that there are $\ll r^{\alpha_H'}$ elements $(\gamma' + t_i + \gamma^b + \gamma^H) \in \cB(r) \cap \Delta_H(r)$ for fixed $\gamma^H$. Adding an element of $\Gamma^H$ does not give a new translate of the hyperplane, so the result follows. \end{proof}

\begin{proof}[Proof that $p(r) \ll r^{\alpha'}$]
The proof is analogous to the proof that $p(r) \ll r^\alpha$ in the quasicanonical case. By Lemma \ref{lem: upper bound} we just need to bound the number of connected components coming from extended cuts $H + (\Delta_H(r))_<$. By Lemma \ref{lem: number of face-cuts} we can find the number of cuts in each direction and using Lemma \ref{lem: count components} we find that the number of connected components is $\ll r^{\alpha'_f}$ for each flag.
\end{proof}

\subsection{Proof of lower bound of Theorem \ref{thm: generalised complexity}}
To get a lower bound for $p(r)$ without the quasicanonical condition we use a slightly more delicate argument to construct `enough' acceptance domains of $\sA(r)$. The strategy will go as follows:
\begin{enumerate}
	\item for each flag $f \in \sF$, construct a small `box' $X$ with sides aligned with those of $f$, with $X$ given as an intersection of $\Gamma_<$ translates of $W$ or $W^c$;
	\item for each face of the window corresponding to each hyperplane $H$ of flag, define a collection $\Delta_H^N(r) \subseteq \cB_<(r)$ giving `nice cuts' of $X$ by the face $\partial_H$;
	\item show that the number of `nice cuts' with distinct intersection with $X$ in each direction is $\gg r^{\alpha'_H} \coloneqq r^{d - \rk(H) + \beta_H}$ and use these to construct $\gg r^{\alpha'}$ acceptance domains of $\sA(r)$.
\end{enumerate}

First we realise step 1:

\begin{lemma} \label{lem: box} Take a flag $f \in \sF$. Then there is a box $X \subseteq W$, of arbitrarily small given diameter, which is an intersection of $\Gamma_<$ translates of $W$ and $W^c$ whose sides are aligned with those of $f$, that is, it is an intersection of half-spaces whose associated hyperplanes are parallel to those of $f$. \end{lemma}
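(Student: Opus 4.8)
Write $n\coloneqq k-d=\dim\intl$ and $f=\{H_1,\dots,H_n\}$, and for each $i$ fix $\eta_i\neq0$ and $h_i\coloneqq\min_{w\in W}w\cdot\eta_i$ with $H_i=\{x\in\intl\mid x\cdot\eta_i=h_i\}$ and defining half-space $H_i^+=\{x\mid x\cdot\eta_i\geq h_i\}\in\sH^+$, so that $W\subseteq H_i^+$ and $\partial_{H_i}=W\cap H_i$. Since $f$ is a flag, $\bigcap_iV(H_i)=\{0\}$, so $(\eta_1,\dots,\eta_n)$ is a basis of $\intl$ and $\Phi(x)\coloneqq(x\cdot\eta_1,\dots,x\cdot\eta_n)$ is a linear isomorphism $\intl\to\R^n$; a box aligned with $f$ is precisely a set $\Phi^{-1}(\prod_iI_i)$ for intervals $I_i$, equivalently a bounded intersection of half-spaces with boundary hyperplanes parallel to the $H_i$. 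The plan is to produce such a box directly as an intersection of $\Gamma_<$-translates of $W$ and $\W$.

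First I would fix $p_0\in\iW$ and a small ball $B^*=B_{\rho}(p_0)$, and, for each $i$, a point $c_i$ in the relative interior of $\partial_{H_i}$, with $\rho$ chosen (using that $W$ has finitely many facets, each closed, and $c_i$ lies in the relative interior only of $\partial_{H_i}$) small enough that for every $u\in\Gamma_<$ the translate $W+u$ coincides with $H_i^++u$ on the ball of radius $\rho$ about $c_i+u$, and $\W+u$ with the opposite open half-space there. By denseness of $\Gamma_<$, for each $i$ pick $u_i^-\in\Gamma_<$ so that the facet $\partial_{H_i}+u_i^-$ passes below $p_0$ with $p_0$ deep in its relative interior; then on $B^*$ one has $(W+u_i^-)\cap B^*=\{x\cdot\eta_i\geq a_i\}\cap B^*$ for some $a_i<p_0\cdot\eta_i$ as close to $p_0\cdot\eta_i$ as we wish, while crucially $W+u_i^-\subseteq\{x\cdot\eta_i\geq a_i\}$ globally because $W\subseteq H_i^+$. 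Put $Y_0\coloneqq W\cap\bigcap_i(W+u_i^-)$, a compact set coinciding on $B^*$ with $\Phi^{-1}(\prod_i[a_i,\infty))\cap B^*$. Finally choose $b_i>p_0\cdot\eta_i$, again very close to $p_0\cdot\eta_i$, and an auxiliary $\delta>0$ smaller than every $b_i-a_i$, all small enough that $\Phi^{-1}(\prod_i[a_i,b_i+\delta))\subseteq B^*$.

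The crux is to carve a box out of $Y_0$ using translates of $\W$. For each $i$ the set $K_i\coloneqq Y_0\cap\{x\cdot\eta_i\geq b_i+\delta\}$ is compact. Given $z\in K_i$, denseness of $\Gamma_<$ and full-dimensionality of $W$ let me choose $\gamma\in\Gamma_<$ with $z$ in the interior of $W+\gamma$, with the facet $\partial_{H_i}+\gamma$ at $\eta_i$-level in $(b_i,b_i+\delta]$ — so that $W+\gamma\subseteq\{x\cdot\eta_i>b_i\}$ — and positioned so that on $B^*$ the translate $W+\gamma$ equals the half-space $\{x\cdot\eta_i\geq(\text{that level})\}$. (The one thing needing care here is that the several linear conditions on $\gamma$ can be met simultaneously; this reduces to checking that a certain open slab meets the open set $z-\iW$, which holds because $z\in W$ and $h_i<b_i<\max_{w\in W}w\cdot\eta_i$.) By compactness, finitely many such translates cover $K_i$; let $W+\gamma_1,\dots,W+\gamma_m$ be all of them over all $i$, each carrying an index $i(l)$ and an $\eta_{i(l)}$-level $c_l\in(b_{i(l)},b_{i(l)}+\delta]$. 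Set
\[
X\coloneqq W\cap\bigcap_{i=1}^n(W+u_i^-)\cap\bigcap_{l=1}^m(\W+\gamma_l)\;=\;Y_0\setminus\bigcup_{l=1}^m(W+\gamma_l),
\]
an intersection of $\Gamma_<$-translates of $W$ and $\W$. Since the $W+\gamma_l$ cover $\bigcup_iK_i$, one gets $X\subseteq Y_0\cap\bigcap_i\{x\cdot\eta_i<b_i+\delta\}\subseteq\Phi^{-1}(\prod_i[a_i,b_i+\delta))\subseteq B^*$; and on $B^*$ the union $\bigcup_l(W+\gamma_l)$ equals $\bigcup_i\{x\cdot\eta_i\geq b_i'\}$ with $b_i'\coloneqq\min\{c_l:i(l)=i\}\in(b_i,b_i+\delta]$, so that $X=\Phi^{-1}(\prod_i[a_i,b_i'))$. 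Thus $X$ is a box aligned with $f$, contained in $\iW$, whose diameter is controlled by $\max_i(b_i'-a_i)$ and hence arbitrarily small.

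The step I expect to be the main obstacle is the placement argument of the previous paragraph — in particular, showing that for each $z\in K_i$ a single $\Gamma_<$-translate of $W$ can be made simultaneously to contain a neighbourhood of $z$, stay inside the half-space $\{x\cdot\eta_i>b_i\}$, and look like that half-space near the box — together with the bookkeeping ensuring the radius $\rho$, the relative-interior conditions, and the various ``very close'' choices are made consistently. As usual in this setting, all the set identities above are only claimed off the singular set $\bigcup_{\gamma\in\Gamma}(\partial W+\gamma_<)$, which renders the boundary subtleties harmless.
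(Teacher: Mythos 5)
Your construction is a genuinely different route from the paper's: the paper simply observes that for a small $\gamma_H\in\Gamma_<$ near $V(H)$ the set $S_H=W\cap(W^c+\gamma_H)$ is, near the centre of the face $\partial_H$, a thin slab $H+I$, and then takes $\Gamma_<$-translates of the $S_H$ (over $H\in f$) with a common small non-empty intersection; your proof instead carves a corner $Y_0=W\cap\bigcap_i(W+u_i^-)$ and then shaves off its top with translates of $\W$. The plan is workable in principle, but there is a concrete gap at the covering step — the very place you flag as the ``main obstacle''.

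For the identity $\bigcup_l(W+\gamma_l)\cap B^*=\bigcup_i\{x\cdot\eta_i\geq b_i'\}\cap B^*$ you need each covering translate $W+\gamma_l$ that meets $B^*$ to restrict to a half-space there. That is a third constraint on $\gamma$: not only $\gamma\in z-\iW$ and $\gamma\cdot\eta_i\in(b_i-h_i,b_i+\delta-h_i)$, but also that $\gamma$ lie in a small neighbourhood of $p_0-c_i$ in \emph{all} $k-d$ directions, so that $B^*$ falls inside the ball about $c_i+\gamma$ on which $W+\gamma$ coincides with the translated half-space. Your parenthetical verification only checks that the $\eta_i$-slab meets $z-\iW$, which is the compatibility of the first two constraints alone; the third cuts $z-\iW$ down to $(z-\iW)\cap B_{\rho'}(p_0-c_i)$, and that this smaller open set still reaches into the slab for \emph{every} $z\in K_i$ is not established (it needs the input $K_i\subseteq W+u_i^-$ with $u_i^-$ close to $p_0-c_i$, together with a margin argument using $z\cdot\eta_i\geq b_i+\delta$). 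Without it you only get the sandwich $\Phi^{-1}\bigl(\prod_i[a_i,b_i]\bigr)\subseteq X\subseteq\Phi^{-1}\bigl(\prod_i[a_i,b_i+\delta)\bigr)$, not that $X$ is itself a box. I believe the approach can be repaired — for instance by covering the thin layer $K_i\cap B^*$ with well-positioned translates satisfying the half-space condition and covering $K_i\setminus B^*$ with translates chosen, by a compactness argument, to be disjoint from $B^*$ altogether — but that extra argument is missing from the proposal, and the paper's slab trick $W\cap(W^c+\gamma_H)$ avoids the whole covering problem by producing each pair of opposite box faces from a single translate.
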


\begin{proof} By density of $\Gamma_<$, for each $H$ one can find a small element $\gamma_H \in \Gamma_<$ which lies on either side of $V(H)$. In particular, the $\gamma_H$ can be chosen so that the intersection $W \cap (W^c  + \gamma_H)$ results in a small `slab' $S_H$ of the form $H + I$, where $I$ is an interval orthogonal to $H$, at least considering $S_H$ near the centre of the face of $W$ parallel to $H$. One can repeat to create such a slab $S_H$ for each $H \in \sH$. Again by density of $\Gamma_<$ we may find lattice translates of the slabs with non-empty intersection, and thus of the form desired. \end{proof}

We now define what we mean by a `nice cut' in steps 2 and 3. Henceforth we fix some flag $f$ and small box $X$ aligned with $f$ as in the above lemma.

\begin{definition} Let $H\in f$. We say that $\partial_H + z$ is a {\bf nice cut} for $X$ if $(\partial_H + z) \cap X \neq \emptyset$ and $(W + z) \cap X = (H^+ + z) \cap X$. We let $\Delta_H^N(r)$ denote the set of $\gamma \in \cB(r)$ for which $\partial_H + \gamma_<$ is a nice cut. \end{definition}

\begin{figure}
	\def\svgwidth{0.5\textwidth}
	\centering
	\import{images/}{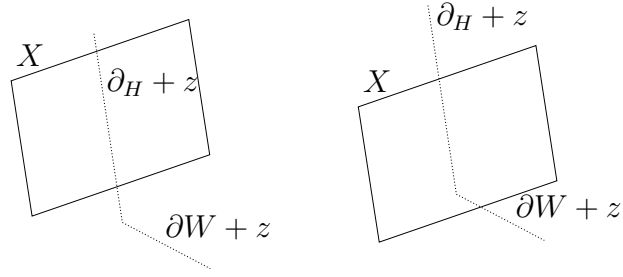}
	\caption{The translate $\partial_H+z$ is a nice cut for the box $X$ if it cuts all the way through $X$. The cut on the left is nice, the cut on the right is not. }\label{fig:nice cuts}
\end{figure}

In other words, the translated face fully cuts the box, so without any $r$-dimensional facet of $\partial_H$ intersecting $X$ with $r < k-d-1$, and without interference from other defining hyperplanes of $W$ (see Figure \ref{fig:nice cuts}). To motivate the notation, notice that $\Delta_H^N(r)\subseteq \Delta_H(r)$. 

\begin{lemma} \label{lem: nice U}Let $H \in f$. If $X$ is sufficiently small then there exists some open set $U \in \intl$ for which any $z \in U$ gives a nice cut $\partial_H + z$. \end{lemma}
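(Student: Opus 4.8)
The plan is to choose the translation vector $z$ so that the shifted hyperplane $H+z$ slices cleanly through the small box $X$ while none of the \emph{other} faces of the shifted window $W+z$ interfere with $X$. Concretely, I will look for $z$ with the two properties that (a) $H+z$ meets the interior of $X$, and (b) $X$ lies strictly inside $G^+ + z$ for every defining half-space $G^+$ of $W$ with $G \neq H$. Property (b) forces $W+z$ to coincide with $H^++z$ throughout $X$, which is one half of the definition of a nice cut; together with (a) it also forces the portion of $H+z$ inside $X$ to lie in $W+z$, hence in $\partial_H + z$, giving the other half. The set $U$ of all such $z$ will be the open set we seek.

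First I would fix a point $p$ in the relative interior of the facet $\partial_H = W\cap H$, which is nonempty and of dimension $k-d-1$ by irredundancy of $\sH^+$ (Proposition \ref{prop: polyhedra}). By the standard description of the face lattice of a polytope, $p$ lies on no hyperplane $G \in \sH$ other than $H$, so $p$ is at a positive distance from each such $G$; I set $\epsilon > 0$ to be the minimum of these finitely many distances. Using Lemma \ref{lem: box} I may assume $\mathrm{diam}(X) < \epsilon$. I then define
\[
U \coloneqq \big\{ z \in \intl : (H+z)\cap \mathring{X} \neq \emptyset \big\} \;\cap\; \bigcap_{G \in \sH,\, G \neq H} \big\{ z \in \intl : X \subseteq (G^+ \setminus G) + z \big\}.
\]

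Two things then remain: that $U$ is open and that it is non-empty. Openness is routine — $\{z : (H+z)\cap \mathring{X} \neq \emptyset\}$ is a union of translates of the open set $\mathring{X}$, while $\{z : X - z \subseteq G^+ \setminus G\}$ is open because $X$ is compact and $G^+\setminus G$ is open, and $\sH$ is finite. For non-emptiness I would take any $x_0 \in \mathring{X}$ and put $z_0 \coloneqq x_0 - p$: then $p + z_0 = x_0 \in (H+z_0)\cap \mathring{X}$, and $X - z_0$ is a set of diameter less than $\epsilon$ containing $p$, hence lies within distance less than $\epsilon$ of $p$ and so inside $G^+\setminus G$ for every $G \neq H$ by the choice of $\epsilon$; thus $z_0 \in U$. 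Finally, for an arbitrary $z \in U$ I would read off directly that $\partial_H + z$ is a nice cut: any $x \in (H+z)\cap \mathring{X}$ satisfies $x - z \in H \subseteq H^+$ and $x - z \in G^+$ for every other $G$ (since $x \in X \subseteq (G^+\setminus G)+z$), so $x - z \in W\cap H = \partial_H$, whence $(\partial_H + z)\cap X \neq \emptyset$; and $X \subseteq \bigcap_{G\neq H}(G^+ + z)$ immediately yields $(W+z)\cap X = (H^++z)\cap X$.

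I do not anticipate a genuine obstacle. The only points that need care are (i) justifying that a relative-interior point of the facet $\partial_H$ avoids all the remaining hyperplanes of $\sH$, which is where irredundancy of $\sH^+$ and the polytope face structure enter, and (ii) arranging the quantifiers so that a single smallness condition on $X$, with a threshold $\epsilon$ depending only on $W$ and $H$, is enough to push $X$ inside \emph{all} the translated open half-spaces $(G^+\setminus G)+z_0$ simultaneously.
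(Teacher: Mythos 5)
Your proposal is correct and follows essentially the same route as the paper: pick a relative-interior point $p$ of the facet $\partial_H$ (so that the remaining defining hyperplanes of $W$ are at positive distance from $p$), shrink $X$ below that distance, and translate $X$ to the vicinity of $p$; the paper packages the local "the window agrees with $H^+$ near $p$" fact as $W - c \supseteq B_{\kappa+\epsilon}\cap(H^+-c)$ and exhibits $U$ as an explicit ball $B_\epsilon(x-c)$, whereas you define $U$ abstractly as the set of $z$ satisfying the two nice-cut conditions and verify openness and nonemptiness separately. The two formulations are interchangeable, and your verification is sound.
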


\begin{proof} Let $c$ be a `centre' of the face, a point of the interior of $\partial_H$. Take some $\kappa$, $\epsilon > 0$ with $W - c \supseteq B_{\kappa+\epsilon} \cap (H^+-c)$; since $c$ belongs to the interior of the face we can find a sufficiently small ball satisfying this. Let $x$ belong to the interior of $X$, which we assume is taken small enough to have diameter less than $\kappa$. Reduce $\epsilon$, if necessary, to ensure that $B_\epsilon(x) \subseteq X$.

We claim that if $z \in B_\epsilon(x - c)$ then $\partial_H + z$ is a nice cut. Firstly, we have that $(\partial_H + z) \cap X \neq \emptyset$. Indeed it contains $c + z$, since $d(x,c+z) = d(x - c,z) < \epsilon$, so $c + z \in X$. Secondly it cuts fully: suppose that $h \in H^+$ and $h + z \in X$. The latter implies that $h - c \in X - x - ( z - (x-c))$. By assumption $X - x \subseteq B_\kappa$ and $z - (x - c) \in B_\epsilon$, so $h - c \subseteq B_{\kappa + \epsilon}$. Since $h \in H^+$ too we have that $h \in W$, as desired. \end{proof}

So we can obtain a lower bound for the number of nice cuts $\partial_H + \gamma_<$, for $\gamma \in \Delta_H^N(r)$, by restricting to $U \cap \cB_<(r)$ for $U$ some open subset of $\intl$. By further restricting to those $\gamma$ resulting in distinct intersections $X \cap (H + \gamma_<)$, we complete step 3:

\begin{lemma} \label{lem: nice cuts} Let $H\in f$. There are $\gg r^{\alpha_H'}$ elements $\gamma \in \Delta_H^N(r)$ resulting in distinct intersections $X \cap (H + \gamma_<)$. \end{lemma}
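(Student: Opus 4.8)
The plan is to manufacture the required nice cuts from the rank $\alpha_H'$ subgroup $G' \leqslant \Gamma$ already used in the proof of Lemma \ref{lem: number of face-cuts}, first pushing its elements into a bounded region of $\intl$ using the lattice $\Gamma^b$, and then shifting everything by a single fixed element to land inside the open set of nice parameters supplied by Lemma \ref{lem: nice U}. Throughout, $\alpha_H' = d - \rk(H) + \beta_H$, and I will freely use the notation of Lemma \ref{lem: number of face-cuts}: $\Gamma^b = \langle b\rangle$ with $b$ the elements of Equation \ref{eq: H-basis} (so $\Gamma^b_< = \langle b_<\rangle_\Z$ is a full-rank lattice in $\intl$), $G = \Gamma/(\Gamma^H + \Gamma^b)$ of rank $\alpha_H'$, and $G' \leqslant \Gamma$ a free abelian subgroup mapping isomorphically onto the free part of $G$. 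Two preliminary observations: distinct elements of $G'$ lie in distinct cosets of $\Gamma^H$ (a difference in $\Gamma^H$ maps to $0$ in $G$, but $G'\hookrightarrow G$), and distinct translates $H + \gamma_<$, $H + \gamma'_<$ correspond exactly to $\gamma,\gamma'$ being in distinct cosets of $\Gamma^H$ (since $(\gamma - \gamma')_< \in V(H)$ forces $\gamma - \gamma' \in \Gamma \cap (V(H)+\phy) = \Gamma^H$). Finally fix a compact fundamental domain $D$ for $\Gamma^b_<$, and, using Lemma \ref{lem: nice U}, an open nonempty $U \subseteq \intl$ such that every $z \in U$ gives a nice cut $\partial_H + z$ for $X$.

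Next I would carry out the construction. For $\gamma' \in G'$, let $c(\gamma') \in \Gamma^b$ be such that $\tilde\gamma(\gamma') \coloneqq \gamma' - c(\gamma')$ satisfies $\tilde\gamma(\gamma')_< \in D$. Then $\tilde\gamma(\gamma') \equiv \gamma' \pmod{\Gamma^b}$, so by the first observation the $\tilde\gamma(\gamma')$ lie in pairwise distinct cosets of $\Gamma^H$. Since $\pi_<$ is injective on $\Gamma^b$ with image the lattice $\Gamma^b_<$, there is a linear map $T\colon \intl \to \phy$ with $g_\vee = T(g_<)$ for all $g \in \Gamma^b$, so $\|c(\gamma')_\vee\| \leq \|T\|\,\|c(\gamma')_<\| \leq \|T\|(\|\gamma'_<\| + \sup_{d\in D}\|d\|) \ll \|\gamma'\| + 1$; hence $\|\tilde\gamma(\gamma')_\vee\| \ll \|\gamma'\| + 1$ while $\|\tilde\gamma(\gamma')_<\|$ is bounded. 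Therefore $\tilde\gamma(\gamma') \in \cB(r)$ for every $\gamma'$ with $\|\gamma'\| \ll r$, and as $G'$ has rank $\alpha_H'$ there are $\gg r^{\alpha_H'}$ such $\gamma'$. This yields $\gg r^{\alpha_H'}$ elements of $\cB(r)$ in pairwise distinct $\Gamma^H$-cosets, but so far their internal parts are only known to lie in $D$, not in $U$.

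The main obstacle is exactly this last point: one cannot simply fine-tune the internal part within the cut direction, because $\Gamma^H_<$ need not be dense in $V(H)$. I would instead argue by pigeonhole over a finite cover. Since $D$ is compact, $U$ open and nonempty, and $\Gamma_<$ dense, there exist $\gamma^{(1)},\dots,\gamma^{(L)} \in \Gamma$ with $D \subseteq \bigcup_{i=1}^L (U + \gamma^{(i)}_<)$. By pigeonhole one translate, say $U + \gamma^{(i)}_<$, contains $\tilde\gamma(\gamma')_<$ for $\gg r^{\alpha_H'}$ of the chosen $\gamma'$; for these put $\gamma \coloneqq \tilde\gamma(\gamma') - \gamma^{(i)}$. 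Then $\gamma_< \in U$, so $\partial_H + \gamma_<$ is a nice cut for $X$ by Lemma \ref{lem: nice U}, and $\|\gamma_\vee\| \leq r + \max_i\|\gamma^{(i)}_\vee\|$ with $\|\gamma_<\|$ bounded, so $\gamma \in \cB(Cr)$ for a constant $C$; after the harmless rescaling $r \mapsto r/C$ this gives $\gg r^{\alpha_H'}$ elements $\gamma \in \Delta_H^N(r)$. Subtracting the single fixed element $\gamma^{(i)}$ from all of them preserves distinctness of $\Gamma^H$-cosets, so the translates $H + \gamma_<$ are pairwise distinct; being parallel and distinct they are disjoint, yet each meets $X$ (this is part of the definition of a nice cut), so the nonempty sets $X \cap (H + \gamma_<)$ are pairwise distinct. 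This produces the claimed $\gg r^{\alpha_H'}$ elements of $\Delta_H^N(r)$ with distinct intersections $X \cap (H + \gamma_<)$, completing the proof.
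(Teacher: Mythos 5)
Your proof is correct, and it takes a genuinely different route through the crux of the argument, namely how to land the constructed elements inside the set $U$ of nice parameters from Lemma \ref{lem: nice U}. The paper reconstructs from scratch a tuple $b = (h,f)$ for which the lattice $\langle b_<\rangle_\Z$ is so fine that \emph{every} $x \in \intl$ can be shifted into $U$ by a single $\Z$-linear combination of $b_<$, and then works with a complement $T$ of $\Gamma^H + B$ in $\Gamma$ (the distinctness argument there uses $T \cap (\Gamma^H + B) = \{0\}$ directly). You instead reuse the $G'$ and $\Gamma^b$ already in hand from Lemma \ref{lem: number of face-cuts}, reduce the internal component to a fixed compact fundamental domain $D$ of $\Gamma^b_<$, and extract a positive proportion of elements in a common translate $U + \gamma^{(i)}_<$ by pigeonhole over a finite cover of $D$. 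This avoids the somewhat delicate step in the paper of engineering $f$ so that $\langle b_<\rangle_\Z + U = \intl$ (a step the paper handles briskly), at the cost of a single fixed shift $\gamma^{(i)}$ and a harmless rescaling of $r$. Both approaches correctly yield the $\gg r^{\alpha_H'}$ elements in distinct $\Gamma^H$-cosets, and your argument that the nice-cut property forces the nonempty parallel slices $X \cap (H+\gamma_<)$ to be pairwise disjoint, hence distinct, matches the role played in the paper by the complementarity of $T$ with $\Gamma^H + B$.
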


\begin{proof}
For a subgroup $K$ of a free Abelian group $G$, let $K_\perp$ denote the group complementary to $K$ in $G$, i.e., $K \cap K_\perp = \{0\}$ and $K + K_\perp$ is finite index in $G$.

Let $U$ be as in Lemma \ref{lem: nice U}. Recall the number $\beta:=\beta_H$ from Equation \ref{eq: H-basis}. Choose any $\beta$ elements $h=(h_1, \dots, h_\beta)$ in $\Gamma^H$ which are linearly independent in the internal space $\intl$. Call the free group of rank $\beta$ that they span $A$. Take $A_\perp$ complementary to $A$ in $\Gamma$ and pick $(k - d - \beta)$ elements $f=(f_{\beta+1}, \dots, f_{k-d})$ from $A_\perp$ so that together with $h_<$ the projections $f_<$ form a basis for $\intl$. Note that since $A + A_\perp$ is rank $k$, it is finite index in $\Gamma$, so $A_< + (A_\perp)_<$ is dense in $\intl$. Therefore we can choose $f$ so that the $\Z$-span of $b = (h, f) \coloneqq (h_1,\ldots,h_\beta,f_{\beta+1},\ldots,f_{k-d})$ is as dense in $\intl$ as we wish. In particular, we may choose $f$ so that for all $x \in \intl$, there is $\ell\in \Z^{k-d}$ with $x+\ell \cdot b_<\in U$. Here, $\ell \cdot b_<=\ell_1 (b_1)_<+\dots +\ell_{k-d}(b_{k-d})_<$. Denote the $\Z$-span of $f$ by $B$, so that $A + B$ is rank $k-d$ and has as list of generators $b=(h, f)$ as in Equation \ref{eq: H-basis}.

Take $T\leqslant \Gamma$ such that $T$ is complementary to $\Gamma^H+B$. Then, since $\Gamma^H\cap B=\{0\}$, we have $\rk(T)=k-\rk(H) - (k-d-\beta)=\alpha_H'$, so that $\# (T\cap \cB(r))\gg r^{\alpha_H'}$. Further, by the choice of $b$, for every $t\in T\cap \cB(r)$ there is $\ell\in \Z^{k-d}$ such that $t_<+\ell \cdot b_<\in U$. By Lemma \ref{lem: nice U} this means that each $t+\ell \cdot b+\partial_H$ is a nice cut. Since $b_<$ is a basis for $\intl$, it is immediate that $\|t+\ell\cdot b\|\ll r$. 

To finish the proof we only need to check that each element of $T$ corresponds to a distinct nice cut. Indeed, let $t_1, t_2$ have $((t_1)_<+\ell_1\cdot b_< +\partial _H)\cap X=((t_2)_<+\ell_2\cdot b_< +\partial_H)\cap X$ for some $\ell_1, \ell_2\in \Z^{k-d}$. Since the cuts are nice, it follows that
\[
(t_1)_<+\ell_1\cdot b_<+H=(t_2)_<+\ell_2\cdot b_<+H,
\]
that is,
\[
(t_1 - t_2) + (\ell_1 - \ell_2)\cdot b \in \Gamma^H.
\]
Since $h_1, \dots, h_\beta\in \Gamma^H$, we can ignore them in the above equation. We thus obtain, for some $\ell_1, \ell_2\in \Z^{k-d-\beta}$,
\[
(t_1 - t_2) + (\ell_1 - \ell_2)\cdot f \in \Gamma^H.
\]
Since $t_1 - t_2\in T$, $(\ell_1 - \ell_2)\cdot f \in B$ and $T$ is complementary to $\Gamma^H+B$, the only way in which this can be is if both $t_1=t_2$ and $\ell_1=\ell_2$.

\end{proof}

\begin{proof}[Proof that $p(r) \gg r^{\alpha'}$]
Take a flag $f$ and construct a small box $X$ according to Lemma \ref{lem: box}, with $X$ an intersection of $\cB_<(c)$ translates $W$ and $W^c$. By Lemma \ref{lem: nice cuts} we can find $\gg r^{\alpha'_H}$ nice cuts of $X$ by faces in each direction $H$. The translates of these cuts can be used to form cut regions of $X$, by removing translated hyperplanes from $X$. By the definition of a nice cut it is easy to see that such regions may also be expressed via intersections of $W$ and $W^c$ translated by the same vectors. Indeed, expressing that $x \in X$ is above $H + z$ and below $H + z'$ is equivalent to saying that $x \in W + z$ and $x \in W^c + z'$ for a nice cut.

We see that the nice cuts define $\gg r^{\alpha'}$ regions which may be expressed as intersections of $\cB_<(r+c)$ translates of $W$ and $W^c$. Each acceptance domain of $\sA(r)$, by Equation \ref{eq: acceptance domain}, is contained in such a region, so there are at least this many patches and $p(r) \gg r^{\alpha'}$, as desired. \end{proof}

\section{The topology of the transversal and algebra of $\mathcal{C}$-topes} \label{sec: topology of transversal}

The results above show that neither the quasi- nor almost canonical conditions are strictly necessary for analysing the complexity of polytopal cut and project sets. However, we shall show in this section that the typically assumed almost canonical condition is not sufficient for constructions used in calculating topological invariants of polytopal cut and project sets, whereas the quasicanonical condition is sufficient.

\subsection{Background}

We briefly review how one associates topological invariants to aperiodic patterns, and how the structure of the internal space, projected lattice and window can in principle be used to determine these invariants for cut and project patterns. See Sadun's book \cite{Sad08book} for an introduction to the topological study of aperiodic patterns, and \cite{FHK02, GHK13} for the specific case of cut and project sets.

Given a tiling or Delone set (a relatively dense and uniformly discrete point set) $P$, there is an associated topological space $\Omega$, sometimes called the \emph{hull} of $P$. It has as points all patterns locally indistinguishable from $P$, that is, comprising of the same finite patches up to translation, at least in the case that $P$ has finite local complexity (FLC), as for our cut and project sets. It has a topology under which, loosely speaking, two patterns $P_1$ and $P_2$ are considered to be close if there are small perturbations (small translations, in the FLC case) of the two patterns which make them agree on a large ball about the origin. We consider the hull $\Omega$ of an aperiodic point pattern coming from a cut and project scheme with polytopal window. It has a {\bf (canonical) transversal} $\Xi \subseteq \Omega$, a subspace homeomorphic to a Cantor set consisting of those point patterns with a point lying over the origin.

There is a `cut version' $\intl^\Xi$ of the internal space. Restricted to a closed set corresponding to $W \subseteq \intl$ it is naturally identified with $\Xi$. There is a canonical action of $\Gamma$ on $\intl^\Xi$ and one may show that
\begin{equation} \label{eq: tiling space}
\Omega \cong (\phy \times \intl^\Xi)/\Gamma,
\end{equation}
where $\Gamma$ acts with the diagonal action on the product, and in the obvious way on $\phy$ (that is, by projecting to the physical space and then translating). Moreover, there is a map $\intl^\Xi \to \intl$, inducing a quotient map $\Omega \to (\phy \times \intl)/\Gamma \cong \tot/\Gamma \cong \bT^k$, which is the map to the \emph{maximal equicontinuous factor}. It commutes with the action of translation by $\phy$ and, for $\partial W$ with measure $0$, is almost everywhere one-to-one with respect to a uniquely defined probability measure, which establishes that the dynamical diffraction is pure point; see \cite{BLM07,LMS02}.

Equation \ref{eq: tiling space} is an important first step in the study of topological invariants of $\Omega$. Using that $\phy \times \intl^\Xi$ is homotopy equivalent to $\intl^\Xi$ and the Serre spectral sequence one finds that
\[
\check{H}^n(\Omega) \cong H^n(\Gamma;C(\intl^{\Xi},\Z)).
\]
The left-hand term is the \v{C}ech cohomology of $\Omega$ and the right-hand term is the group cohomology of $\Gamma$ with coefficients in the $\Z \Gamma$ module of continuous functions from $\intl^\Xi$ to $\Z$. Machinery for calculating this group cohomology is set out in \cite{FHK02, GHK13}. In fact, as pointed out in \cite[Appendix]{GHK13}, the approach is essentially constructs a transversal $X$ for the tiling space and a translation action on it, describing $\Omega$ as a \emph{suspension}
\[
\Omega \cong (\R^d \times X)/\Z^d
\]
for a Cantor set $X$ equipped with a $\Z^d$ action. Sadun and Williams \cite{SW03} showed that one may always write $\Omega$ as such a suspension, but this is of little computational help unless one has a handle on $X$ and the $\Z^d$ action on it. In the case of certain polytopal cut and project sets, one \emph{can} effectively describe such a $\Z^d$ action on $X$ by using the $\Gamma$ action on $\intl^\Xi$.

\subsection{The cut internal space}

Consider the set of singular points $S$ in the internal space, given by those points which lie in a lattice translate of the boundary of the window:
\[
S = \bigcup_{\gamma \in \Gamma} \partial W - \gamma_<.
\]
We let $NS$ denote its complement in $\intl$, the set of {\bf non-singular points}.

We define a new metric on $NS$ as follows. Given points $x$, $y \in NS$ and $\gamma \in \Gamma$, we write $x \Delta_\gamma y$ if $W-\gamma_<$ contains precisely one of $x$ or $y$; that is, $\partial W - \gamma_<$ separates $x$ and $y$. Then for distinct $x$, $y \in NS$ we let
\[
\delta(x,y) \coloneqq \sup \{1/r \mid x \Delta_\gamma y \text{ for } \gamma \in \cB(r)\}.
\]
Setting $\delta(x,x) = 0$, it is clear that $\delta(x,y) = 0$ if and only if $x = y$, by density of $\Gamma_<$. It is clearly symmetric. And if $x \Delta_\gamma z$ then either $x \Delta_\gamma y$ or $y \Delta_\gamma z$, since $y$ must belong either to $W - \gamma_<$ or its exterior. So the triangle inequality is satisfied: we have the ultrametric property $\delta(x,z) \leq \min\{\delta(x,y),\delta(y,z)\}$. We then take as our metric on $NS$ the maximum of the Euclidean metric and $\delta$:
\[
d(x,y) \coloneqq \max\{\|x-y\|,\delta(x,y)\}.
\]
We denote the completion of $NS$ under $d$ by $\intl^\Xi$. By the definition of the acceptance domains (see Equation \ref{eq: acceptance domain}), two points $x$, $y \in W \cap NS$ are close if and only if their corresponding cut and project sets $\Lambda_x$ and $\Lambda_y$ (see Equation \ref{eq: cpsfull}) agree to a large radius about the origin. This agrees with the metric on $\Omega$; we see that $W \cap NS$ corresponds to the non-singular patterns of the transversal $\Xi$ and its closure in $\intl^\Xi$ corresponds to the full canonical transversal $\Xi$.

The lattice $\Gamma$ acts on $NS$ by $\gamma \cdot x \mapsto x + \gamma_<$. Any given $\gamma$ acts uniformly continuously on $NS$, so this extends to a continuous group action of $\Gamma$ on $\intl^\Xi$. Since $\|x-y\| \leq d(x,y)$, the identity map on $NS$ extends to a continuous map from $\intl^\Xi$ onto $\intl$.

It is then a fact that $\Omega \cong (\phy \times \intl^\Xi)/\Gamma$, where $\Gamma$ acts diagonally on the product. We briefly sketch the reason (for more details, see \cite{FHK02}). To a point $x \in \intl^\Xi$ one may continuously assign a tiling $T(x) \in \Omega$. For points of $NS \subseteq \intl^\Xi$ this is just the tiling $\Lambda_x$, otherwise $T(x)$ is a limit of non-singular tilings. Every other tiling in $\Omega$ is a translate of such a tiling, so we have a surjective continuous map $f \colon \phy \times \intl^\Xi \to \Omega$ given by $f(v,x) \coloneqq T(x) + v$. It is not hard to see that $T(x)$ and $T(y)$ can only be translates of each other if $x$ and $y$ are in the same $\Gamma$-orbit, so by aperiodicity $T(x) + v$ and $T(y)+w$ agree if and only if $(v,x) \equiv (w,y) \mod \Gamma$.

\subsection{Hyperplane cuts and $\mathcal{C}$-topes}
We consider an alternative way of defining $\intl^\Xi$ which corresponds to a standard approach in the literature. For $x$, $y \in NS$, $\gamma \in \Gamma$ and $H \in \sH$, we let $x |^H_\gamma y$ if $x$ and $y$ lie on opposite sides of the translated hyperplane $H - \gamma_<$. Analogously to above, this defines a metric on $NS$:
\[
\delta'(x,y) \coloneqq \sup\{1/r \mid x |^H_\gamma y \text{ for } \gamma \in \cB(r), H \in \sH\}.
\]
In other words, $x$ and $y$ are close when separating them by a lattice translate of a hyperplane defining $\partial W$ requires a large lattice element. This defines an ultra-metric on $NS$, just as before. We then take our metric on $NS$ as
\[
d'(x,y) \coloneqq \max\{\|x-y\|,\delta'(x,y)\}.
\]
The completion of $(NS,d')$ is denoted by $\intl'$.

\begin{theorem} If the cut and project scheme is quasicanonical then a sequence is Cauchy in $(NS,d)$ if and only if it is Cauchy in $(NS,d')$. Hence, in this case, the identity map $\id \colon (NS,d) \to (NS,d')$ extends to a homeomorphism $\intl^\Xi \cong \intl'$ of the completions. \end{theorem}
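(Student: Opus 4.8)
The plan is to show that the identity map of $NS$ sends Cauchy sequences to Cauchy sequences in both directions between $(NS,d)$ and $(NS,d')$, and then to deduce the homeomorphism formally. One direction requires no hypotheses beyond polytopality: since $\partial W\subseteq\bigcup_{H\in\sH}H$, whenever a segment $[x,y]$ meets a translate $\partial W-\gamma_<$ it crosses some facet of $W-\gamma_<$, which lies in a translate $H-\gamma_<$ of a defining hyperplane, so $x\,\Delta_\gamma\,y$ implies $x\,|^H_\gamma\,y$ for some $H\in\sH$. Hence $\delta\le\delta'$, so $d\le d'$ pointwise on $NS$; in particular every $d'$-Cauchy sequence is $d$-Cauchy and $\id\colon(NS,d')\to(NS,d)$ is $1$-Lipschitz. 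The substance of the theorem is the converse, and this is where the quasicanonical condition enters.

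So suppose $(x_n)$ is $d$-Cauchy; since $\|x-y\|\le d(x,y)$ it is Euclidean-Cauchy, so $x_n\to\bar x$ for some $\bar x\in\intl$. Arguing by contradiction, suppose $(x_n)$ is not $d'$-Cauchy: there is $\epsilon_0>0$ with $d'(x_m,x_n)\ge\epsilon_0$ for arbitrarily large $m,n$, and since $\|x_m-x_n\|\to0$ this forces $\delta'(x_m,x_n)\ge\epsilon_0$. Unpacking the supremum defining $\delta'$, each such pair is separated by a translate $H-\gamma_<$ with $H\in\sH$ and $\gamma\in\cB(2/\epsilon_0)$. Since $\sH$ and $\cB(2/\epsilon_0)$ are finite, pigeonhole extracts a single pair $(H_0,\gamma_0)$ and a subsequence with $x_{m_k}\,|^{H_0}_{\gamma_0}\,x_{n_k}$ and $m_k,n_k\to\infty$; passing to a further subsequence I may assume $x_{m_k}$ always lies strictly on one fixed side of $H_0-(\gamma_0)_<$ and $x_{n_k}$ on the other. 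Letting $k\to\infty$ then puts $\bar x$ in both closed half-spaces of $H_0-(\gamma_0)_<$, so $\bar x\in H_0-(\gamma_0)_<$, i.e.\ $z\coloneqq\bar x+(\gamma_0)_<\in H_0$.

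Now I apply Definition~\ref{def: qc} to $H_0$ at the single point $z\in H_0$: there is $\epsilon>0$ and $v_1,\dots,v_\ell\in\Gamma_<$ such that on $B_\epsilon(z)$ one of the half-spaces $H_0^+$, $H_0^-$ equals $\bigcup_{i}(W-v_i)$; say it is $H_0^+$ (the case $H_0^-$ is identical, with the roles of $x_{m_k}$ and $x_{n_k}$ interchanged). Since $x_{m_k}+(\gamma_0)_<$ and $x_{n_k}+(\gamma_0)_<$ both tend to $z$, for large $k$ they lie in $B_\epsilon(z)$, and exactly one of them --- the same one for every $k$ --- lies in $H_0^+\setminus H_0$ and the other in $(H_0^+)^c$. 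The first therefore lies in some $W-v_{i(k)}$ while the second does not. Writing $v_{i(k)}=(\mu_k)_<$ with $\mu_k\in\Gamma$, this says precisely that $\partial W-(\mu_k+\gamma_0)_<$ separates $x_{m_k}$ from $x_{n_k}$, i.e.\ $x_{m_k}\,\Delta_{\mu_k+\gamma_0}\,x_{n_k}$. Finally, since the $v_{i(k)}$ range over the finite set $\{v_1,\dots,v_\ell\}$, a last pigeonhole fixes $\mu_k=\mu$ along a subsequence; then $\partial W-(\mu+\gamma_0)_<$ separates $x_{m_k}$ from $x_{n_k}$ for all $k$, so $\delta(x_{m_k},x_{n_k})\ge 1/R$ with $R\coloneqq\max\{\|(\mu+\gamma_0)_\vee\|,\|(\mu+\gamma_0)_<\|\}$ a fixed constant. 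As $m_k,n_k\to\infty$ this contradicts $d$-Cauchyness, since $\delta(x_{m_k},x_{n_k})\le d(x_{m_k},x_{n_k})\to0$.

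This shows the $d$- and $d'$-Cauchy sequences in $NS$ coincide, whence $\id\colon NS\to NS$ is Cauchy-continuous from each of $(NS,d)$, $(NS,d')$ to the other and extends to continuous maps between the completions $\intl^\Xi$ and $\intl'$; these restrict to the identity on the dense set $NS$, hence are mutually inverse, which gives the homeomorphism. I expect the only real difficulty to be organisational --- carrying out the nested pigeonhole/subsequence extractions cleanly and observing that no regularity of $\bar x$ is needed, which is exactly why invoking Definition~\ref{def: qc} pointwise at $z$ is more convenient here than the global covering statements or Corollary~\ref{cor: acceptance <-> cuts}. The argument also shows transparently why quasicanonicity is essential: at a point $z\in H_0$ where neither local half-space is a union of window translates, as in Example~\ref{ex: triangle}, a hyperplane separation cannot be upgraded to a window-boundary separation, and the statement fails there.
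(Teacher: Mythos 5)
Your proof is correct and takes a genuinely different route from the paper's. The paper establishes the nontrivial direction by invoking Corollary~\ref{cor: acceptance <-> cuts}, which provides a global, uniform estimate of the form ``if $\delta(x,y)\le(\kappa r+c)^{-1}$ then $\delta'(x,y)\le r^{-1}$'' for $x,y\in W\cap NS$, and then translates a general $d$-Cauchy sequence into $W$ by density of $\Gamma_<$. You instead argue by contradiction: a $d$-Cauchy sequence that fails to be $d'$-Cauchy has a Euclidean limit $\bar x$, a pigeonhole argument pins down a fixed separating hyperplane $H_0-(\gamma_0)_<$ with $\bar x$ on it, and then Definition~\ref{def: qc} applied \emph{pointwise} at $z=\bar x+(\gamma_0)_<$ upgrades the hyperplane separation to a window-boundary separation by a fixed lattice vector, contradicting $d$-Cauchyness. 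Your argument is more elementary and self-contained --- it does not require Proposition~\ref{prop: halfspace covering}, Corollary~\ref{cor: acceptance <-> cuts}, nor the translation-into-$W$ step, since the localisation is already forced by Euclidean convergence. What the paper's route buys in exchange is a quantitative modulus relating $\delta$ and $\delta'$ (a uniform continuity estimate with explicit linear constants $\kappa,c$), whereas the contradiction argument only yields the qualitative Cauchy equivalence. Both are valid; yours is arguably the cleaner standalone proof of this particular statement.

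Two small presentational points that are worth tightening but do not affect correctness. First, when you say ``one of the half-spaces $H_0^+$, $H_0^-$ equals $\bigcup_i(W-v_i)$ on $B_\epsilon(z)$; say it is $H_0^+$,'' this is fine, but be explicit that the WLOG on which of $x_{m_k}, x_{n_k}$ lands in the covered half-space is a separate (independent) WLOG from which half-space the quasicanonical condition covers; both are harmless relabellings but they should not be conflated. Second, after the final pigeonhole fixing $\mu_k=\mu$, the constant $R=\max\{\|(\mu+\gamma_0)_\vee\|,\|(\mu+\gamma_0)_<\|\}$ could a priori be $0$ (if $\mu+\gamma_0=0$); in that degenerate case $\delta(x_{m_k},x_{n_k})$ is unbounded, which only strengthens the contradiction, but a sentence noting this avoids a division-by-zero quibble.
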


\begin{proof} This follows from Proposition \ref{prop: halfspace covering} in an analogous way to the proof of Proposition \ref{prop: acc <-> cuts}. It may also be proven from Corollary \ref{cor: acceptance <-> cuts}, the main conclusion of Proposition \ref{prop: acc <-> cuts}. Firstly, note that if $x \Delta_\gamma y$ then $x |_\gamma^H y$ for some $H \in \sH$, so that $d(x,y) \leq d'(x,y)$. Hence, a sequence which is Cauchy with respect to $d'$ must also be Cauchy with respect to $d$. Conversely, Corollary \ref{cor: acceptance <-> cuts} implies that for sufficiently large $r$ and $x$, $y \in W \cap NS$, if $x \Delta_\gamma y$ does not hold for any $\gamma \in \cB(\kappa r + c)$ then $x |^H_{\gamma'} y$ does not hold for any $H \in \sH$ and $\gamma' \in \cB(r)$. So, for sufficiently large $r$, if $\delta(x,y) \leq (\kappa r+c)^{-1}$ then $\delta'(x,y) \leq r^{-1}$ for $x$, $y \in W \cap NS$. Hence, a sequence in $W \cap NS$ which is Cauchy with respect to $d$ is also Cauchy with respect to $d'$. Given a generic sequence in NS, not necessarily in $W$, which is Cauchy with respect to $d$, it must converge in $\intl$ (since $d(x,y) \leq \|x-y\|$). So we may translate it by an element of $\gamma_< \in \Gamma_<$ to a sequence which is eventually contained in $W \cap NS$, by density of $\Gamma_<$. Clearly translating by $\gamma_<$ does not change whether or not a sequence is Cauchy with respect to $d$ or $d'$, since points are separated by a lattice translate of the window or a hyperplane if and only if the points shifted by $\gamma_<$ are separated by the $\gamma_<$ shift of the window or hyperplane, which only changes the norm of the required translate by at most a constant, the norm of $\gamma$. We conclude that a sequence is Cauchy with respect to $d$ if and only if it is with respect to $d'$. \end{proof}

One may view the space $\intl'$ as a version of the internal space which is `cut' by lattice translates of the hyperplanes of $\sH$, and the above says that we may use this model to take the role of $\intl^\Xi$ in $\Omega \cong (\phy \times \intl^\Xi)/\Gamma$. The topology of the totally disconnected space $\intl'$ may be described in terms of the algebra of continuous functions $\intl' \to \Z$. The compactly supported such functions correspond precisely to sums of indicator functions of \emph{$\mathcal{C}$-topes}, polyhedral regions of $\intl$ whose boundaries are contained in the lattice translates of the hyperplanes, see \cite{FHK02, GHK13}.

The example below demonstrates that the almost canonical condition does not guarantee that the space $\intl'$ has the correct topology if we do not also assume the quasicanonical condition:

\begin{example} Consider the $3$-to-$1$ cut and project scheme of Example \ref{ex: triangle} with triangular window. As noted there, this scheme is almost canonical but not quasicanonical. Consider the three grey parallelogram regions in Figure \ref{fig: triangle}. These are $\mathcal{C}$-topes. Assigning different integers to each region describes a continuous function $f \colon \intl' \to \Z$. However, this function does \emph{not} correspond to a continuous function on the canonical transversal $\Xi \subseteq \Omega$. As proven in Example \ref{ex: triangle}, for arbitrarily large $r$ each grey region intersects a common acceptance domain of $r$-patch. So there are tilings of $\Xi$ which agree to arbitrarily large radii about the origin but are assigned different values under $f$, making $f$ discontinuous on $\Xi$. \end{example}

\bibliographystyle{amsalpha}
\bibliography{biblio}

\end{document}